\documentclass[12pt]{amsart}       
\usepackage{txfonts}
\usepackage{amssymb}
\usepackage{eucal}
\usepackage{graphicx}
\usepackage{amsmath}
\usepackage{amscd}
\usepackage[all]{xy}           
\usepackage{amsfonts,latexsym}
\usepackage{xspace}
\usepackage{epsfig}
\usepackage{float}
\usepackage{color}
\usepackage{fancybox}
\usepackage{colordvi}
\usepackage{multicol}
\usepackage{colordvi}
\usepackage[colorlinks,final,backref=page,hyperindex,hypertex]{hyperref}
\usepackage[active]{srcltx} 
\usepackage{tikz}

\usepackage{graphicx}
\usepackage{mathrsfs}
\topmargin -.8cm \textheight 22.8cm \oddsidemargin 0cm \evensidemargin -0cm \textwidth 16.3cm

\newtheorem{theorem}{Theorem}[section]
\newtheorem{prop}[theorem]{Proposition}

\newtheorem{coro}[theorem]{Corollary}
\newtheorem{prop-def}{Proposition-Definition}[section]
\newtheorem{coro-def}{Corollary-Definition}[section]

\theoremstyle{definition}
\newtheorem{defn}[theorem]{Definition}
\newtheorem{remark}[theorem]{Remark}

\newtheorem{exam}[theorem]{Example}


\newcommand{\nc}{\newcommand}
\nc{\tred}[1]{\textcolor{red}{#1}}
\nc{\tblue}[1]{\textcolor{blue}{#1}}
\nc{\tgreen}[1]{\textcolor{green}{#1}}
\nc{\tpurple}[1]{\textcolor{purple}{#1}}
\nc{\btred}[1]{\textcolor{red}{\bf #1}}
\nc{\btblue}[1]{\textcolor{blue}{\bf #1}}
\nc{\btgreen}[1]{\textcolor{green}{\bf #1}}
\nc{\btpurple}[1]{\textcolor{purple}{\bf #1}}
\nc{\NN}{{\mathbb N}}
\nc{\ncsha}{{\mbox{\cyr X}^{\mathrm NC}}} \nc{\ncshao}{{\mbox{\cyr
X}^{\mathrm NC}_0}}


\newcommand{\efootnote}[1]{}

\renewcommand{\textbf}[1]{}

\newcommand{\delete}[1]{}

\nc{\mlabel}[1]{\label{#1}}  
\nc{\mcite}[1]{\cite{#1}}  
\nc{\mref}[1]{\ref{#1}}  
\nc{\mbibitem}[1]{\bibitem{#1}} 

\delete{
\nc{\mlabel}[1]{\label{#1}{\hfill \hspace{1cm}{\bf{{\ }\hfill(#1)}}}}
\nc{\mcite}[1]{\cite{#1}{{\bf{{\ }(#1)}}}}  
\nc{\mref}[1]{\ref{#1}{{\bf{{\ }(#1)}}}}  
\nc{\mbibitem}[1]{\bibitem[\bf #1]{#1}} 
}


\nc{\opa}{\ast} \nc{\opb}{\odot} \nc{\op}{\bullet} \nc{\pa}{\frakL}
\nc{\arr}{\rightarrow} \nc{\lu}[1]{(#1)} \nc{\mult}{\mrm{mult}}
\nc{\diff}{\mathfrak{Diff}}
\nc{\opc}{\sharp}\nc{\opd}{\natural}
\nc{\ope}{\circ}
\nc{\dpt}{\mathrm{d}}
\nc{\hck}{H_{RT}}
\nc{\vdf}{\calf}
\nc{\ldf}{\calf_\ell}
\nc{\hlf}{H_\ell}
\nc{\onek}{\mathbf{1}_\bfk}

\nc{\tforall}{\quad \text{ for all }}

\nc{\mrba}{matching  Rota-Baxter algebra\xspace}
\nc{\Mrba}{Matching multiple Rota-Baxter algebra\xspace}
\nc{\mrbas}{matching multiple Rota-Baxter algebras\xspace}
\nc{\Mrbas}{Matching multiple Rota-Baxter algebras\xspace}

\nc{\matmul}{matching multiple\xspace}
\nc{\Matmul}{Matching multiple\xspace}
\nc{\match}{matching\xspace}
\nc{\Match}{Matching\xspace}

\nc{\paybe}{polarized associative Yang-Baxter equation\xspace}
\nc{\Paybe}{Polarized associative Yang-Baxter equation\xspace}
\nc{\cpaybe}{PAYBE}

\nc{\diam}{alternating\xspace}
\nc{\Diam}{Alternating\xspace}
\nc{\cdiam}{canonical alternating\xspace}
\nc{\Cdiam}{Canonical alternating\xspace}
\nc{\AW}{\mathcal{A}}

\nc{\rba}{Rota-Baxter algebra\xspace}
\nc{\rbas}{Rota-Baxter algebras\xspace}

\nc{\ari}{\mathrm{ar}}

\nc{\lef}{\mathrm{lef}}

\nc{\Sh}{\mathrm{ST}}

\nc{\Cr}{\mathrm{Cr}}

\nc{\st}{{Schr\"oder tree}\xspace}
\nc{\sts}{{Schr\"oder trees}\xspace}

\nc{\vertset}{\Omega} 

\nc{\assop}{\quad \begin{picture}(5,5)(0,0)
\line(-1,1){10}
\put(-2.2,-2.2){$\bullet$}
\line(0,-1){10}\line(1,1){10}
\end{picture} \quad \smallskip}

\nc{\operator}{\begin{picture}(5,5)(0,0)
\line(0,-1){6}
\put(-2.6,-1.8){$\bullet$}
\line(0,1){9}
\end{picture}}

\nc{\idx}{\begin{picture}(6,6)(-3,-3)
\put(0,0){\line(0,1){6}}
\put(0,0){\line(0,-1){6}}
\end{picture}}

\nc{\pb}{{\mathrm{pb}}}
\nc{\Lf}{{\mathrm{Lf}}}

\nc{\lft}{{left tree}\xspace}
\nc{\lfts}{{left trees}\xspace}

\nc{\fat}{{fundamental averaging tree}\xspace}

\nc{\fats}{{fundamental averaging trees}\xspace}
\nc{\avt}{\mathrm{Avt}}

\nc{\rass}{{\mathit{RAss}}}

\nc{\aass}{{\mathit{AAss}}}

\nc{\vin}{{\mathrm Vin}}    
\nc{\lin}{{\mathrm Lin}}    
\nc{\inv}{\mathrm{I}n}
\nc{\gensp}{V} 
\nc{\genbas}{\mathcal{V}} 
\nc{\bvp}{V_P}     
\nc{\gop}{{\,\omega\,}}     

\nc{\bin}[2]{ (_{\stackrel{\scs{#1}}{\scs{#2}}})}  
\nc{\binc}[2]{ \left (\!\! \begin{array}{c} \scs{#1}\\
    \scs{#2} \end{array}\!\! \right )}  
\nc{\bincc}[2]{  \left ( {\scs{#1} \atop
    \vspace{-1cm}\scs{#2}} \right )}  
\nc{\bs}{\bar{S}} \nc{\cosum}{\sqsubset} \nc{\la}{\longrightarrow}
\nc{\rar}{\rightarrow} \nc{\dar}{\downarrow} \nc{\dprod}{**}
\nc{\dap}[1]{\downarrow \rlap{$\scriptstyle{#1}$}}
\nc{\md}{\mathrm{dth}} \nc{\uap}[1]{\uparrow
\rlap{$\scriptstyle{#1}$}} \nc{\defeq}{\stackrel{\rm def}{=}}
\nc{\disp}[1]{\displaystyle{#1}} \nc{\dotcup}{\
\displaystyle{\bigcup^\bullet}\ } \nc{\gzeta}{\bar{\zeta}}
\nc{\hcm}{\ \hat{,}\ } \nc{\hts}{\hat{\otimes}}
\nc{\barot}{{\otimes}} \nc{\free}[1]{\bar{#1}}
\nc{\uni}[1]{\tilde{#1}} \nc{\hcirc}{\hat{\circ}} \nc{\lleft}{[}
\nc{\lright}{]} \nc{\lc}{\lfloor} \nc{\rc}{\rfloor}
\nc{\curlyl}{\left \{ \begin{array}{c} {} \\ {} \end{array}
    \right .  \!\!\!\!\!\!\!}
\nc{\curlyr}{ \!\!\!\!\!\!\!
    \left . \begin{array}{c} {} \\ {} \end{array}
    \right \} }
\nc{\longmid}{\left | \begin{array}{c} {} \\ {} \end{array}
    \right . \!\!\!\!\!\!\!}
\nc{\onetree}{\bullet} \nc{\ora}[1]{\stackrel{#1}{\rar}}
\nc{\ola}[1]{\stackrel{#1}{\la}}
\nc{\ot}{\otimes} \nc{\mot}{{{\boxtimes\,}}}
\nc{\otm}{\overline{\boxtimes}} \nc{\sprod}{\bullet}
\nc{\scs}[1]{\scriptstyle{#1}} \nc{\mrm}[1]{{\rm #1}}
\nc{\margin}[1]{\marginpar{\rm #1}}   
\nc{\dirlim}{\displaystyle{\lim_{\longrightarrow}}\,}
\nc{\invlim}{\displaystyle{\lim_{\longleftarrow}}\,}
\nc{\mvp}{\vspace{0.3cm}} \nc{\tk}{^{(k)}} \nc{\tp}{^\prime}
\nc{\ttp}{^{\prime\prime}} \nc{\svp}{\vspace{2cm}}
\nc{\vp}{\vspace{8cm}} \nc{\proofbegin}{\noindent{\bf Proof: }}
\nc{\proofend}{$\blacksquare$ \vspace{0.3cm}}
\nc{\modg}[1]{\!<\!\!{#1}\!\!>}
\nc{\intg}[1]{F_C(#1)} \nc{\lmodg}{\!
<\!\!} \nc{\rmodg}{\!\!>\!}
\nc{\cpi}{\widehat{\Pi}}
\nc{\sha}{{\mbox{\cyr X}}}  

\newfont{\scyr}{wncyr10 scaled 550}
\nc{\ssha}{\mbox{\bf \scyr X}}
\nc{\shap}{{\mbox{\cyrs X}}} 
\nc{\shpr}{\diamond}    
\nc{\shp}{\ast} \nc{\shplus}{\shpr^+}
\nc{\shprc}{\shpr_c}    
\nc{\msh}{\ast} \nc{\zprod}{m_0} \nc{\oprod}{m_1}
\nc{\vep}{\epsilon} \nc{\labs}{\mid\!} \nc{\rabs}{\!\mid}
\nc{\sqmon}[1]{\langle #1\rangle}

\nc{\mmbox}[1]{\mbox{\ #1\ }} \nc{\dep}{\mrm{dep}} \nc{\fp}{\mrm{FP}}
\nc{\rchar}{\mrm{char}} \nc{\End}{\mrm{End}} \nc{\Fil}{\mrm{Fil}}
\nc{\Mor}{Mor\xspace} \nc{\gmzvs}{gMZV\xspace}
\nc{\gmzv}{gMZV\xspace} \nc{\mzv}{MZV\xspace}
\nc{\mzvs}{MZVs\xspace} \nc{\Hom}{\mrm{Hom}} \nc{\id}{\mrm{id}}
\nc{\im}{\mrm{im}} \nc{\incl}{\mrm{incl}} \nc{\map}{\mrm{Map}}
\nc{\mchar}{\rm char} \nc{\nz}{\rm NZ} \nc{\supp}{\mathrm Supp}

\nc{\Alg}{\mathbf{Alg}} \nc{\Bax}{\mathbf{Bax}} \nc{\bff}{\mathbf f}
\nc{\bfk}{{\bf k}} \nc{\bfone}{{\bf 1}} \nc{\bfx}{\mathbf x}
\nc{\bfy}{\mathbf y}
\nc{\base}[1]{\bfone^{\otimes ({#1}+1)}} 
\nc{\Cat}{\mathbf{Cat}}

\nc{\detail}{\marginpar{\bf More detail}
    \noindent{\bf Need more detail!}
    \svp}
\nc{\Int}{\mathbf{Int}} \nc{\Mon}{\mathbf{Mon}}
\nc{\rbtm}{{shuffle }} \nc{\rbto}{{Rota-Baxter }}
\nc{\remarks}{\noindent{\bf Remarks: }} \nc{\Rings}{\mathbf{Rings}}
\nc{\Sets}{\mathbf{Sets}} \nc{\wtot}{\widetilde{\odot}}
\nc{\wast}{\widetilde{\ast}} \nc{\bodot}{\bar{\odot}}
\nc{\bast}{\bar{\ast}} \nc{\hodot}[1]{\odot^{#1}}
\nc{\hast}[1]{\ast^{#1}} \nc{\mal}{\mathcal{O}}
\nc{\tet}{\tilde{\ast}} \nc{\teot}{\tilde{\odot}}
\nc{\oex}{\overline{x}} \nc{\oey}{\overline{y}}
\nc{\oez}{\overline{z}} \nc{\oef}{\overline{f}}
\nc{\oea}{\overline{a}} \nc{\oeb}{\overline{b}}
\nc{\weast}[1]{\widetilde{\ast}^{#1}}
\nc{\weodot}[1]{\widetilde{\odot}^{#1}} \nc{\hstar}[1]{\star^{#1}}
\nc{\lae}{\langle} \nc{\rae}{\rangle}
\nc{\lf}{\lfloor}
\nc{\rf}{\rfloor}


\nc{\QQ}{{\mathbb Q}}
\nc{\RR}{{\mathbb R}} \nc{\ZZ}{{\mathbb Z}}


\nc{\cala}{{\mathcal A}} \nc{\calb}{{\mathcal B}}
\nc{\calc}{{\mathcal C}}
\nc{\cald}{{\mathcal D}} \nc{\cale}{{\mathcal E}}
\nc{\calf}{{\mathcal F}} \nc{\calg}{{\mathcal G}}
\nc{\calh}{{\mathcal H}} \nc{\cali}{{\mathcal I}}
\nc{\call}{{\mathcal L}} \nc{\calm}{{\mathcal M}}
\nc{\caln}{{\mathcal N}} \nc{\calo}{{\mathcal O}}
\nc{\calp}{{\mathcal P}} \nc{\calr}{{\mathcal R}}
\nc{\cals}{{\mathcal S}} \nc{\calt}{{\mathcal T}}
\nc{\calu}{{\mathcal U}} \nc{\calw}{{\mathcal W}} \nc{\calk}{{\mathcal K}}
\nc{\calx}{{\mathcal X}} \nc{\CA}{\mathcal{A}}

\nc{\fraka}{{\mathfrak a}} \nc{\frakA}{{\mathfrak A}}
\nc{\frakb}{{\mathfrak b}} \nc{\frakB}{{\mathfrak B}}
\nc{\frakc}{{\mathfrak c}}
\nc{\frakD}{{\mathfrak D}} \nc{\frakF}{\mathfrak{F}}
\nc{\frakf}{{\mathfrak f}} \nc{\frakg}{{\mathfrak g}}
\nc{\frakH}{{\mathfrak H}} \nc{\frakL}{{\mathfrak L}}
\nc{\frakM}{{\mathfrak M}} \nc{\bfrakM}{\overline{\frakM}}
\nc{\frakm}{{\mathfrak m}} \nc{\frakP}{{\mathfrak P}}
\nc{\frakN}{{\mathfrak N}} \nc{\frakp}{{\mathfrak p}}
\nc{\frakS}{{\mathfrak S}} \nc{\frakT}{\mathfrak{T}}
\nc{\frakX}{{\mathfrak X}}
\nc{\BS}{\mathbb{S
}}

\font\cyr=wncyr10 \font\cyrs=wncyr7
\nc{\li}[1]{\textcolor{red}{#1}}
\nc{\lir}[1]{\textcolor{red}{Li:#1}}
\nc{\yi}[1]{\textcolor{blue}{Yi: #1}}
\nc{\xing}[1]{\textcolor{purple}{Xing:#1}}
\nc{\revise}[1]{\textcolor{purple}{#1}}


\nc{\ID}{{\rm I}}\nc{\lbar}[1]{\overline{#1}}\nc{\bre}{{\rm bre}}
\nc{\sd}{\cals}\nc{\rb}{\rm RB}\nc{\A}{\rm A}\nc{\LL}{\rm L}\nc{\tx}{\tilde{X}}
\nc{\col}{\Delta_{RT}}\nc{\mul}{m_{RT}}\nc{\ul}{u_{RT}}\nc{\epl}{\varepsilon_{RT}}
\nc{\hl}{H_{RT}}\nc{\arro}[1]{#1}\nc{\px}{P_{\tx}}\nc{\pw}{P_{\mathfrak{w}}}\nc{\pl}{B_\omega^+}
\nc{\pp}{\pl}\nc{\ppp}[1]{B^+(#1)}\nc{\dw}{\diamond_{\mathfrak{w}}}\nc{\dl}{\diamond_{\rm \ell}}
\nc{\ncshaw}{\sha^{{\rm NC}}_{\Omega}}\nc{\ncshal}{\sha^{{\rm NC}}_{{\rm RT}}}
\nc{\ver}{\rm V}\nc{\ld}{l}\nc{\del}{\Delta_{{\rm \ell}}}\nc{\epsl}{\epsilon_{{\rm \ell}}}
\nc{\uul}{u_{{\rm \ell}}}\nc{\oneh}{\mathbf{1}}\nc{\onew}{\mathbf{1}}
\nc{\etree}{1} \nc{\conc}{m_{RT}}
\nc{\hrtb}{H_{RT}(X\sqcup\Omega)} \nc{\hrts}{H_{RT}(X, \Omega)}\nc{\rts}{\mathcal{T}(X, \Omega)}\nc{\rfs}{\mathcal{F}(X, \Omega)} \nc{\ncshall}{\sha^{{\rm NC}}_{{\rm RT}}} \nc{\ldl}{\leq_{\mathrm{dl}}} \nc{\pla}{B_{\alpha}^{+}} \nc{\plb}{B_{\beta}^{+}}

\nc{\bim}[1]{#1}  \nc{\shaop}{\sha_{\Omega}^{+}}  \nc{\shao}{\sha_{\Omega}}
\nc{\bbim}[2]{#1 #2} \nc{\bbbim}[2]{#1,\, #2} \nc{\RBF}{{\rm RBF}}
\nc{\frbf}{F_{\RBF}} \nc{\shaf}{\ssha_{\tiny{\Omega}}} \nc{\sham}{\diamond_{\Omega}}

\nc{\dnx}{\Delta_n A} \nc{\dx}{\Delta A} \nc{\dgp}{{\rm deg_{P}}}
\nc{\dgt}{{\rm deg_{T}}} \nc{\dg}{{\rm deg}} \nc{\ida}{ID($A$)} \nc{\tu}{\tilde{u}} \nc{\tv}{\tilde{v}}
 \nc{\fbase}{\calb} \nc{\LF}{\mathrm{RF}} \nc{\FFA}{\mathrm{LF}} \nc{\irr}{\mathrm{Irr}}
 \nc{\result}{\bfk\mathrm{Irr}(S_n)}  \nc{\I}{I_{\mathrm{ID},n}^0}
 \nc{\nrs}{\calr_n^\star} \nc{\ii}{\mathrm{I}} \nc{\iii}{\mathrm{II}}
\nc{\intl}{{\rm int}}\nc{\ws}[1]{{#1}}\nc{\deleted}[1]{\delete{#1}}\nc{\plas}{placements\xspace}

\nc{\Id}{\mathrm{Id}} \nc{\Irr}{\mathrm{Irr}}

\nc{\tos}{totally ordered set } \nc{\nes}{nonempty set}
\nc{\baa}{\succ_\alpha}
\nc{\laa}{\prec_\alpha}
\nc{\bba}{\succ_\beta}
\nc{\lba}{\prec_\beta}
\nc{\bdt} {\succ_\delta}
\nc{\lia} {\prec_\iota}
\nc{\bka}{\succ_\kappa}
\nc{\lmu}{\prec_\mu}

\nc{\ya}{\YY{\node[scale=0.8] at (0.75,-0.55) {{$a$}};}}
\nc{\yb}{\YY{\node[scale=0.8] at (0.75,-0.55) {{$b$}};}}
\nc{\yc}{\YY{\node[scale=0.8] at (0.75,-0.55) {{$c$}};}}





\definecolor{red}{rgb}{1.,0.,0.}
\definecolor{green}{rgb}{0.,1.,0.}
\definecolor{blue}{rgb}{0.,0.,1.}

\newcommand{\tdunc}[1]{\begin{tikzpicture}[line cap=round,line join=round,
x=0.8cm,y=0.8cm]
\clip(1.8,1.9) rectangle (2.5,2.5);
\begin{scriptsize}
\draw [fill=black] (2.,2.) circle (1pt);
\end{scriptsize}
\draw(2.3,2.1) node {\tiny #1};
\end{tikzpicture}}

\newcommand{\tdtroisunc}[5]{\begin{tikzpicture}[line cap=round,line join=round,
x=0.8cm,y=0.8cm]
\clip(1.5,1.9) rectangle (2.5,3.2);
\draw [line width=0.5pt, color=#4] (2.,2.)-- (1.7,2.5);
\draw [line width=0.5pt, color=#5] (2.,2.)-- (2.3,2.5);
\begin{scriptsize}
\draw [fill=black] (1.7,2.5) circle (1pt);
\draw [fill=black] (2.,2.) circle (1pt);
\draw [fill=black] (2.3,2.5) circle (1pt);
\end{scriptsize}
\draw(2.4,2.1) node {\tiny #1};
\draw(2.3,2.8) node {\tiny #2};
\draw(1.7,2.8) node {\tiny #3};
\end{tikzpicture}}

\newcommand{\tdtroisdeuxc}[5]{\begin{tikzpicture}[line cap=round,line join=round,
x=0.8cm,y=0.8cm]
\clip(1.8,1.9) rectangle (2.5,3.5);
\draw [line width=0.5pt, color=#4] (2.,2.5)-- (2.,2.);
\draw [line width=0.5pt, color=#5] (2.,2.5)-- (2.,3.);
\begin{scriptsize}
\draw [fill=black] (2.,2.) circle (1pt);
\draw [fill=black] (2.,2.5) circle (1pt);
\draw [fill=black] (2.,3.) circle (1pt);
\end{scriptsize}
\draw(2.3,2.1) node {\tiny #1};
\draw(2.3,2.6) node {\tiny #2};
\draw(2.3,3.1) node {\tiny #3};
\end{tikzpicture}}

\newcommand{\tddeuxc}[3]{\begin{tikzpicture}[line cap=round,line join=round,
x=0.8cm,y=0.8cm]
\clip(1.8,1.9) rectangle (2.5,3.);
\draw [line width=.5pt, color=#3] (2.,2.5)-- (2.,2.);
\begin{scriptsize}
\draw [fill=black] (2.,2.) circle (1pt);
\draw [fill=black] (2.,2.5) circle (1pt);
\end{scriptsize}
\draw(2.3,2.1) node {\tiny #1};
\draw(2.3,2.6) node {\tiny #2};
\end{tikzpicture}}


\newcommand\YY[2][]{%
\tikz[line width=0.18ex,scale=0.75,baseline=-3ex,inner sep=1pt,#1]{
\draw (0,0)--+(1/2,-1/2)--+(1,0) (1/2,-1/2)--+(0,-1/2); #2}}

\newcommand\YYY[2][]{%
\tikz[line width=0.18ex,scale=0.5,baseline=-0.58ex,
every node/.style={font=\small,inner sep=1pt},#1]{
\coordinate (o) at (0,0); #2}}


\nc{\mscr}[1]{\mathscr{#1}}
\nc{\cal}[1]{\mathcal{#1}} \nc{\bb}[1]{\mathbb{#1}}
 \nc{\inn}{\rm{in}}
\nc{\spp}{\mathscr{P}} \nc{\stt}{\mscr{T}} \nc{\spb}{\mscr{P}_\circ}  \nc{\sph}{\mscr{P}_\bullet}
\nc{\dera}{\mathscr{D}er\mathscr{A}}

\nc{\add}{\oplus}
\nc{\prl}[2]{(x#1y)#2z}\nc{\prr}[2]{x#1(y#2z)}
\nc{\ra}[3]{#1(x)#2#3(y)}
\nc{\dereq}[6]
{\treey{\cdlr[0.8]{o} \cdl{ol}\cdr{or}#1{o/b}
\node at (0.2,-0.2) {$d$};\cdlr{o}\node  at (0,0) {$#2$};}
-\treey{\cdlr[0.8]{o} \cdl{ol}\cdr{or}
\node at (ol) {$#4$};\node at (-0.2,0.5) {$d$};\cdlr{o}\node  at (0,0) {$#3$};}
-\treey{\cdlr[0.8]{o} \cdl{ol}\cdr{or}
\node at (or) {$#6$};\node at (0.2,0.5) {$d$};\cdlr{o}\node  at (0,0) {$#5$};}}
\nc{\kc}[4]{\beta_{#1,#2,#3}^{b,#4}}\nc{\ka}[3]{\gamma_{#1,#2}^{c,#3}}
\nc{\otimesh}{\mathop{\otimes}_{\text{H}}}
\nc{\hu}[1]{\tgreen{\underline{Huhu:}#1 }}
\nc{\kong}{\noindent}

\usetikzlibrary{calc}
\newlength\xch\newlength\dbj
\xch=0.45cm
\dbj=0.05cm
\newif\ifqdd
%
\newcommand\cddf[3]{%
\coordinate (#2) at ($(#1)+(#3)$);
\draw (#1)--(#2);
\ifqdd\fill (#1) circle (\dbj);\fi}
\newcommand\cdx[4][1]{\cddf{#2}{#3}{#4:#1*\xch}}
%
\newcommand\cdl[2][1]{\cdx[#1]{#2}{#2l}{135}}
\newcommand\cdr[2][1]{\cdx[#1]{#2}{#2r}{45}}
\newcommand\cdlr[2][1]{%
\foreach \i in {#2} {\cdl[#1]{\i}\cdr[#1]{\i}}}
%
%
\newcommand\cdb[2][1]{\cdx[#1]{#2}{#2b}{-90}}
%
%
\newcommand\treeo[2][]{\tikz[baseline=-0.58ex,line width=0.15ex,
every node/.style={font=\scriptsize,inner sep=1pt},#1]{%
\coordinate (o) at (0,0);#2}}
\newcommand\treey[2][]{\treeo[#1]{\cdb o\cdlr o#2}}
%




\begin{document}

\title[Matching Rota-Baxter algebras, dendriform algebras and pre-Lie algebras]{Matching Rota-Baxter algebras, matching dendriform algebras and matching pre-Lie algebras}
%
\author{Xing Gao}
\address{School of Mathematics and Statistics, Key Laboratory of Applied Mathematics and Complex Systems, Lanzhou University, Lanzhou, Gansu 730000, P.\,R. China}
\email{gaoxing@lzu.edu.cn}

\author{Li Guo}
\address{Department of Mathematics and Computer Science, Rutgers University, Newark, NJ 07102, USA}
\email{liguo@rutgers.edu}

\author{Yi Zhang} \address{School of Mathematics and Statistics,
Lanzhou University, Lanzhou, Gansu 730000, P.\,R. China}
\email{zhangy2016@lzu.edu.cn}

\date{\today}
\begin{abstract}
We introduce the notion of a \match \rba motivated by the recent work on multiple pre-Lie algebras arising from the study of algebraic renormalization of regularity structures~\mcite{BHZ,Foi18}. This notion is also related to iterated integrals with multiple kernels and solutions of the associative polarized Yang-Baxter equation.
Generalizing the natural connection of Rota-Baxter algebras with dendriform algebras to \match \rbas, we obtain the notion of \match dendriform algebras. As in the classical case of one operation, \match Rota-Baxter algebras and \match dendriform algebras are related to \match pre-Lie algebras which coincide with the aforementioned multiple pre-Lie algebras. More general notions and results on \match tridendriform algebras and \match PostLie algebras are also obtained.
\end{abstract}

\subjclass[2010]{
16W99, 
16T25, 
05C05, 
17D99 
}

\keywords{\Match Rota-Baxter algebra; polarized Yang-Baxter equation, \match dendriform algebra; \match pre-Lie algebra, \match PostLie algebra}

\maketitle

\tableofcontents

\vspace{-1cm}

\setcounter{section}{0}

\allowdisplaybreaks

\section{Introduction}
The purpose of this paper is to introduce the various concepts in order to make connection with the critical notion of multiple pre-Lie algebras that was introduced recently from the remarkable work of Bruned, Hairer and Zambotti~\mcite{BHZ} on algebraic renormalization of regularity structures.

\subsection{Structures with compatible multiple operations}
In recent years, there have been quite a few algebraic structures with multiple copies of certain given operation and with various compatibility conditions among these copies. Well-known examples include the associative dialgebra~\mcite{Lo00}, associative trialgebra~\mcite{LR04}, linearly compatible dialgebra and totally compatible dialgebra~\mcite{ZBG2}, all derived from the associative algebra, together with their variations for Lie algebra and other algebras, such as compatible Lie bialgebras~\mcite{WB15} and compatible $\mathcal{O}$-operators~\mcite{LBS}. On the level of operads, they are also studied as replicators and other compatible structures~\mcite{GK13, KV13, PBG, PBGN, Str, St2}.

Of particular interest to us is the notion of a multiple pre-Lie algebra of Foissy~\mcite{Foi18} to understand the recent work~\mcite{BHZ} on the Hopf algebra approach of quantum field theory. We note that this multiple structure is similar to another structure as mentioned above, called matching dialgebras~\mcite{ZBG1,Zin}.  The purpose of this paper is to study multiple pre-Lie algebras from this viewpoint. We first provide some backgrounds in the one operation case before extending this viewpoint to multiple operations.

The notion of pre-Lie algebras can be tracked back to the work of Vinberg~\mcite{Vin63} under the name left-symmetric algebras on convex homogeneous cones and also appeared independently  in the study of  cohomology of associative algebras~\mcite{Ger63}. It plays an important role in  many areas of mathematics and mathematical physics, such as classical and quantum Yang-Baxter equations, quantum field theory, Rota-Baxter algebras, vertex algebras and operads (see~\mcite{Bai} and references therein).

It has been found recently that in parallel to the close connection between Lie algebra and associative algebra by taking the commutator bracket, there is a close connection between the dendriform algebra and pre-Lie algebra. Moreover, in parallel to regarding the dendriform algebra as a splitting of the associative algebra, the pre-Lie algebra is a splitting of the Lie algebra in a precise sense that applies to all operads~\mcite{BBGN,PBGN}.
Additionally, the splitting of operations can be achieved by the application of the Rota-Baxter operator. So a Rota-Baxter operator on an associative algebra (resp. Lie algebra) gives rise to a dendriform algebra~\mcite{Agu01} (resp. pre-Lie algebra~\mcite{AB08}).

Thus we have the following commutative diagram of categories (the arrows will go in the opposite direction for the corresponding operads)
\begin{equation}
\begin{split}
\xymatrix{ \text{Rota-Baxter} \atop \text{associative algebra} \ar[rr]^{\text{commutator}} \ar[d] && \text{Rota-Baxter} \atop \text{Lie algebra} \ar[d] \\
\text{dendriform algebra} \ar[rr]^{\text{commutator}} && \text{pre-Lie algebra}
}
\end{split}
\mlabel{di:old}
\end{equation}
Here the horizontal arrows are taking commutator brackets and the vertical arrows are splitting of operations or applying Rota-Baxter operators.

The purpose of this paper from this viewpoint is to make these connections work for the multiple pre-Lie algebra as follows.
\begin{equation}
\begin{split}
\xymatrix{ \text{\match Rota-Baxter} \atop \text{associative algebra} \ar[rr]^{\text{commutator}} \ar[d] &&  \text{\match Rota-Baxter} \atop \text{Lie algebra} \ar[d] \\
\text{\match}\atop \text{dendriform algebra} \ar[rr]^{\text{commutator}} && \text{\match (multiple)}\atop \text{pre-Lie algebra}
}
\end{split}
\mlabel{di:new}
\end{equation}
In fact, there are variations of the above diagram in terms of linear compatibility and total compatibility. We will focus on the matching case with the application to multiple pre-Lie algebras in mind.

\subsection{Rota-Baxter algebra going multiple}

The study of the Rota-Baxter algebra originated from the probability study of G. Baxter~\cite{Bax60} and attracted quite much attention for its broad applications~\cite{CK,Gub,Ro}.

A {\bf Rota-Baxter algebra} is an associative algebra equipped with a linear operator that satisfies  the integration-by-parts formula in analysis. More precisely, for a given commutative ring $\bfk$ and $\lambda \in \bfk$, a Rota-Baxter algebra of weight $\lambda$, is a pair $(R,P)$ consisting of an associative $\bfk$-algebra $R$ and a linear operator $P:R\rightarrow R$ which satisfies the {\bf Rota-Baxter equation}
\begin{align}
P(x)P(y)=P(xP(y)+P(x)y+\lambda xy) \, \text{ for all }\, x, y\in R. \mlabel{eq:RBidd}
\end{align}
Then $P$ is called a {\bf Rota-Baxter operator} of weight $\lambda$.

One importance of the Rota-Baxter algebra is its close relationship with other algebraic structures. For example, a Rota-Baxter associative algebra naturally carries a dendriform or tridendriform algebra structure; while a Rota-Baxter Lie algebra naturally carries a pre-Lie algebra or a PostLie algebra structure. In general, an action of the Rota-Baxter operator on an operad gives a splitting of the operad~\mcite{BBGN,PBG}.

Further, it was established a long time ago that Rota-Baxter operator on a Lie algebra is precisely the operator form of the classical Yang-Baxter equation~\mcite{Bai2,STS}. More recently, similar relations between Rota-Baxter operators on an associative algebra and associative analogous of the classical Yang-Baxter equation were established~\mcite{Agu01,BGN1}.

Several recent developments have called for studying systems with multiple operators that satisfy various Rota-Baxter like compatibility conditions among the operators. The notion of a Rota-Baxter family was introduced in \mcite{EGP07} in the study of renormalization of quantum field theory. The notion of Rota-Baxter systems was introduced in~\mcite{Brz16} to gain better understanding of dendriform algebras and associative Yang-Baxter pairs~\mcite{Agu01}.

In this paper we initiate the study of another structure with multiple Rota-Baxter operators, called the \mrba, with several motivations and applications.

The first motivation is from the aforementioned notion of multiple pre-Lie algebras~\mcite{BHZ,Foi18}. There Foissy also constructed an isomorphism between  the classical operad and the the operad of multiple pre-Lie algebras on typed trees, which generalize a major result of Chapoton and Livernet~\mcite{CL01}.
Since pre-Lie algebras come naturally from Rota-Baxter operators (of weight zero) on Lie algebras, it is natural to ask, from what kind of operators that the multiple pre-Lie algebras come from, or to put it in the associative algebra context, from what kind of operators that the (yet to be defined) \match dendriform algebras come from.

The second motivation is similar to that of \mcite{Brz16} on associative Yang-Baxter equations, from another generalization of the associative Yang-Baxter equation which we call \paybe (\cpaybe). Since solutions of the associative Yang-Baxter equation naturally give Rota-Baxter operators, it is natural to determine the operator solutions of \cpaybe.

Incidently, the notion of \mrba also resolves two other issues.

The first one is the lack of structures on the set of Rota-Baxter operators. One challenge in the study of Rota-Baxter algebras is that the set of Rota-Baxter operators on a given algebra is not closed under addition, in contrast to that of differential operators. A natural question is to determine a suitable condition to be imposed to a set of Rota-Baxter operators so that linear combinations of these operators are still Rota-Baxter operators.

For the second one, studying Volterra integral operators with multiple kernels naturally calls for a family of Rota-Baxter operators (of weight zero) with suitable compatible conditions~\mcite{GGL, Ze}.

It is remarkable that the notion of a \mrba addresses all these issues. See Theorem~\mref{thm:rbmpre1}, Proposition~\mref{prop:YBandRB}, Proposition~\mref{lemm:twoopes} and Example~\mref{ex:int} respectively for the specific results.

In fact, the \match Rota-Baxter algebra of weight zero is sufficient to address most of these questions. So an extra benefit of our Rota-Baxter algebra approach is that, by considering \match Rota-Baxter operators {\em with any weights}, we are also led to other useful notions of \paybe with weights, \match tridendriform algebras generalizing the tridendriform algebra with weight~\mcite{BR10}, and \match PostLie algebras.

\subsection{Outline of the paper}

This paper is organized as follows. In Section~\mref{sec:rba}, we introduce the notion of a \match Rota-Baxter algebra and provide its examples from integrals with kernels and the polarized associative Yang-Baxter equation, as well as verifying its linearity properties.

In Section~\mref{sec:dend}, we introduce the notions of a \match dendriform algebra and a \match tridendriform algebra, and establish the connection from \match Rota-Baxter algebras to \match dendriform and tridendriform algebras. We also construct a \match dendriform algebra by typed (that is, edge decorated) planar binary trees, inspired by typed tree construction of multiple pre-Lie algebras in~\mcite{BHZ,Foi18}. Also the \match dendriform algebra is shown to be the splitting of the (linearly) compatible associative algebra.

In Section~\mref{sec:prelie}, the multiple pre-Lie algebra, identified as the \match pre-Lie algebra is shown to come from the antisymmetrization of the \match dendriform algebra and from a \match Rota-Baxter Lie algebra of weight zero. Finding a variation of \match tridendriform algebra in the Lie algebra context is not so straightforward as the natural choice does not satisfy the desired property. The right candidate is found to be the \match associative PostLie algebra, an associative variation of the \match PostLie algebra.

\smallskip

\noindent
{\bf Notation.}
Throughout this paper, let $\bfk$ be a unitary commutative ring unless the contrary is specified. It will be the base ring of all modules, algebras,  tensor products, as well as linear maps.
By an algebra we mean an associative \bfk-algebra (possibly without unit) unless otherwise specified.

\section{Matching \rbas and the \paybe}
\mlabel{sec:rba}
In this section, we first introduce the concept of \match \rbas and give their examples in various contexts. We then show that \match Rota-Baxter algebras provide the right context for the linear structure of Rota-Baxter operators. Finally, we propose the notation of the polarized associative Yang-Baxter equation and  derive a \match Rota-Baxter operator of weight $\lambda$ from a polarized associative Yang-Baxter equation of weight $\lambda$.

\subsection{\Match \rbas}
We begin with a general notion of \match \rbas for its potential applications even though much of the study later in in this paper focuses on special cases.

\begin{defn}
Let $\Omega$ be a nonempty set and let $\lambda_\Omega:=\{\lambda_\omega\,|\,\omega\in \Omega\}\subseteq \bfk$ be a set of scalars parameterized by $\Omega$. A {\bf \match (multiple) \rba} of weight $\lambda_\Omega$, or simple a {\bf \match RBA}, is a pair $(R, P_\Omega)$ consisting of an (associative) algebra $R$ and a set $P_\Omega:=\{P_\omega\,|\,\omega\in \Omega\}$ of linear operators
$$P_\omega: R\longrightarrow R, \quad \omega\in \Omega\,,$$
that satisfy the {\bf \match Rota-Baxter equation}
\begin{align}
P_\alpha(x)P_{\beta}(y)&=P_{\alpha}(xP_{\beta}(y)) +P_{\beta}(P_{\alpha}(x)y)+\lambda_\beta P_{\alpha}(xy)
\, \tforall x,y \in R, \alpha,\beta\in \Omega\,.
\mlabel{eq:RBid}
\end{align}
For \match \rbas $(R,\, P_\Omega)$ and $(R',\,P'_\Omega)$ of the same weight $\lambda_\Omega$, a {\bf \match \rba homomorphism} between them is a linear map $\phi : R\rightarrow R'$ such that $\phi P_\omega = P'_\omega \phi$ for all $\omega\in \Omega$.
When $\lambda_\Omega=\{\lambda\}$, we also call the \match \rba to have weight $\lambda$.
\mlabel{de:gmrba}
\end{defn}

\begin{remark}
\begin{enumerate}
\item
Any Rota-Baxter algebra of weight $\lambda$ can be viewed as a \mrba
of weight $\lambda$ by taking $\Omega$ to be a singleton.

\item An earlier notion of algebras with multiple Rota-Baxter operators, called a Rota-Baxter family algebra, was defined in~\mcite{EGP07}. See also~\mcite{Guo09,ZGM}.
The difference is that there the index set $\Omega$ is a semigroup and the  Rota-Baxter equation in Rota-Baxter family algebras is defined by
\begin{align*}
P_\alpha(x)P_{\beta}(y)=P_{\alpha\beta}(P_{\alpha}(x)y)+P_{\alpha\beta}(xP_{\beta}(y))+\lambda P_{\alpha\beta}(xy) \, \text{ for }\, x,y \in A, \alpha, \beta\in \Omega.
\end{align*}
\item
More recently, another notion of algebras with multiple operators, called a Rota-Baxter system, was introduced by Brzensi\`nski in~\cite{Brz16} to study dendriform algebras and covariant bialgebras. It is defined to be a triple $(R,P,S)$ consisting of an algebra $R$ and linear maps $P,Q: R\to R$ such that
$$P(x)P(y)=P(P(x)y)+P(xQ(y)), \quad Q(x)Q(y)=Q(P(x)y)+Q(xQ(y)), \quad x, y\in R.$$
See also Remark~\mref{rk:aybe}.
\end{enumerate}
\mlabel{rk:rbs}
\end{remark}

A natural example of Rota-Baxter operator of weight 0 is the operator of Riemann integral. Consider the $\RR$-algebra $R:=\mathrm{Cont}(\RR)$ of continuous functions on $\RR$. Then the linear operator

\begin{equation}
I: R\longrightarrow R, \quad f(x)\mapsto \int_0^x f(t)\,dt, \quad f\in R,
\mlabel{eq:int}
\end{equation}
is a Rota-Baxter operator of weight 0~\mcite{Bax60}.

We can generalize this to the multiple case as follows:

\begin{exam}
Let $R:=\mathrm{Cont}(\RR)$ be as above. Fix a family $k_\omega(x)$ (called kernels in integral equations~\mcite{GGL,Ze}) of functions in $R$ parameterized by $\omega\in \Omega$. Define
\begin{equation} I_\omega: R\longrightarrow R, \quad
f(x)\mapsto \int_0^x k_\omega(t)f(t)\,dt, \quad \omega \in \Omega.
\mlabel{eq:mint}
\end{equation}

Note that $I_\omega(f)=I(k_\omega f)$ for the integral operator $I$ in Eq.~\eqref{eq:int}.
Then from the Rota-Baxter property of $I$, we obtain, for $\alpha, \beta\in \Omega$ and $f, g\in R$,
$$
I_\alpha(f)I_\beta(g)
= I(k_\alpha f)I(k_\beta g) = I(k_\alpha f I(k_\beta g))+I(I(k_\alpha f) k_\beta g) = I_\alpha(f I_\beta(g)) + I_\beta (I_\alpha (f) g).
$$
Thus $(R,\{I_\omega\,|\,\omega\in \Omega\})$ is a \match \rba of weight zero.
\mlabel{ex:int}
\end{exam}

In fact, the same argument works more generally. Let $(R,P)$ be a Rota-Baxter algebra of weight zero and let $\{k_\omega\,|\,\omega\in \Omega\}\subseteq R$. Then the same argument shows that $(R,\{P_{k_\omega}\,|\,\omega\in \Omega\})$, where $P_{k_\omega}(x):=P(k_\omega x)$, is a \match \rba of weight zero.
However this setup no longer works when the weight is not zero.

\begin{exam}\cite[Lemma~3.7]{Guo08}
Let $(R, P)$ be a Rota-Baxter algebra of weight 0. Define $\tilde{P}:=-P$. Then $\tilde{P}$ is also a Rota-Baxter operator of weight 0 and the pair $(R, \{P, \tilde{P}\})$ is a \mrba of weight 0.
\end{exam}

The following result shows that a \match Rota-Baxter algebra gives rise to other \match Rota-Baxter algebras by arbitrary finite linear combinations.

For a map $A: \Omega\to \bfk$, $\omega \mapsto a_\omega$ with finite support, we also denote $A=\{a_\omega\,|\,\omega\in \Omega\}$.

\begin{prop}
Let $(R,\, \{P_{\omega}\mid \omega \in \Omega\})$ be a \match \rba of weight $\lambda_\Omega$. Let $A_i:\Omega\to \bfk, i\in I$, be a family of maps with finite supports, identified with $A_i=\{a_{i,\omega}\in \bfk\,|\,\omega\in \Omega\}$. Consider the linear combinations
\begin{align}
P_i:=P_{A_i}:=\sum_{\omega\in \Omega} a_{i,\omega} P_\omega.
\mlabel{eq:fav11}
\end{align}
Then
$(R,\,\{P_i\mid i\in I\})$ is a \match Rota-Baxter algebra of weight $\lambda_I:=\{\lambda_i:=\sum_{\omega\in \Omega} a_{i,\omega} \lambda_\omega\,|\, i\in I\}$. In particular, in a \match Rota-Baxter algebra, any linear combination of the Rota-Baxter operators $P_\omega$ is still a Rota-Baxter operator of a suitable weight.
\mlabel{lemm:twoopes}
\end{prop}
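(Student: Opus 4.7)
The plan is to verify the matching Rota-Baxter equation \eqref{eq:RBid} for each pair $(P_i, P_j)$ directly, by bilinearly expanding both sides in terms of the given operators $P_\omega$ and invoking the corresponding equation for each pair $(P_\alpha, P_\beta)$. Since each $A_i$ has finite support, all sums below are finite, so there is no convergence issue to worry about.

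Fix $i, j \in I$ and $x, y \in R$. I would first expand the left-hand side using \eqref{eq:fav11} and the original matching Rota-Baxter equation:
\begin{align*}
P_i(x)P_j(y) &= \sum_{\alpha,\beta\in\Omega} a_{i,\alpha} a_{j,\beta}\, P_\alpha(x) P_\beta(y) \\
&= \sum_{\alpha,\beta\in\Omega} a_{i,\alpha} a_{j,\beta}\bigl[P_\alpha(xP_\beta(y)) + P_\beta(P_\alpha(x)y) + \lambda_\beta P_\alpha(xy)\bigr].
\end{align*}
Then I would expand each term on the right-hand side of the desired identity separately, using linearity of each $P_\omega$:
\begin{align*}
P_i(xP_j(y)) &= \sum_{\alpha} a_{i,\alpha} P_\alpha\Bigl(x \sum_\beta a_{j,\beta} P_\beta(y)\Bigr) = \sum_{\alpha,\beta} a_{i,\alpha} a_{j,\beta} P_\alpha(xP_\beta(y)), \\
P_j(P_i(x)y) &= \sum_{\alpha,\beta} a_{i,\alpha} a_{j,\beta} P_\beta(P_\alpha(x) y), \\
\lambda_j P_i(xy) &= \Bigl(\sum_\beta a_{j,\beta}\lambda_\beta\Bigr)\sum_\alpha a_{i,\alpha} P_\alpha(xy) = \sum_{\alpha,\beta} a_{i,\alpha} a_{j,\beta}\lambda_\beta P_\alpha(xy),
\end{align*}
where the last line uses the very definition $\lambda_j = \sum_\beta a_{j,\beta}\lambda_\beta$. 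Summing these three expressions gives exactly the double sum appearing in the expansion of $P_i(x)P_j(y)$, so the matching Rota-Baxter identity holds for $(P_i, P_j)$.

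There is really no main obstacle here: the proof is a bookkeeping calculation. The only conceptual point worth emphasizing is why the weight of $P_j$ in the new system must be $\lambda_j = \sum_\beta a_{j,\beta}\lambda_\beta$ rather than, say, a function of the $a_{i,\alpha}$ as well. This forced choice comes from the asymmetric role of the two indices in \eqref{eq:RBid}: the weight term is $\lambda_\beta P_\alpha(xy)$, linear in $\beta$ alone, so after summing against $a_{j,\beta}$ it collapses to $\lambda_j$ times $\sum_\alpha a_{i,\alpha}P_\alpha(xy) = P_i(xy)$. The final sentence of the proposition, that any linear combination of the $P_\omega$ is again a Rota-Baxter operator of a suitable weight, is then the special case $|I| = 1$ (or equivalently, reading the diagonal $i = j$ of the above computation).
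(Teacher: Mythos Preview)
Your proof is correct and follows essentially the same approach as the paper: expand $P_i(x)P_j(y)$ bilinearly, apply Eq.~(\ref{eq:RBid}) termwise, and reassemble the three resulting double sums as $P_i(xP_j(y))$, $P_j(P_i(x)y)$, and $\lambda_j P_i(xy)$. The only cosmetic difference is that the paper works the computation forward from the left-hand side in a single chain, whereas you expand both sides separately and compare.
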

\begin{proof}
For $x, y\in R$ and $i, j \in I$, by Eq.~(\mref{eq:fav11}), we have
\begin{align*}
P_{i}(x)P_{j}(y)
=&\left(\sum_{\alpha\in \Omega} a_{i,\alpha} P_\alpha (x) \right) \left(\sum_{\beta\in \Omega} a_{j,\beta} P_\beta(y)\right)\\
=&\ \sum_{\alpha\in \Omega} \sum_{\beta\in \Omega} a_{i,\alpha} b_{j,\beta } P_\alpha(x)P_\beta(y)\\
=&\ \sum_{\alpha\in \Omega} \sum_{\beta\in \Omega} a_{i,\alpha} a_{j,\beta} \left(P_\alpha ( x P_\beta (y) )+P_\beta\left( P_\alpha (x)y\right) +\lambda_\beta P_\alpha(xy) \right)\quad \text{(by Eq.~(\mref{eq:RBid}))}\\
=&\ \sum_{\alpha\in \Omega} \sum_{\beta\in \Omega}
a_{i,\alpha} a_{j,\beta} P_\alpha ( x P_\beta (y))
+\sum_{\alpha\in \Omega}\sum_{\beta\in \Omega}a_{i,\alpha} a_{j,\beta} P_\beta\left( P_\alpha (x)y\right)+ \sum_{\alpha\in \Omega}\sum_{\beta\in \Omega}\lambda_\beta a_{i,\alpha} a_{j,\beta} P_\alpha(xy)\\
=&\ \sum_{\alpha\in \Omega} a_{i,\alpha} P_\alpha \left( x \sum_{\beta\in \Omega} a_{j,\beta} P_\beta (y) \right)+\sum_{\beta\in \Omega} a_{j,\beta} P_\beta\left(\sum_{\alpha\in \Omega} a_{i,\alpha} P_\alpha (x)y\right)+\left(\sum_{\beta\in \Omega}\lambda_\beta a_{j,\beta}\right) \sum_{\alpha\in \Omega} a_{i,\alpha} P_\alpha(xy)\\
=&\ \sum_{\alpha\in \Omega} a_{i,\alpha} P_\alpha \big( x P_j(y) \big)+\sum_{\beta\in \Omega} a_{j,\beta} P_\beta\big(P_i(x)y\big)+\left(\sum_{\beta\in \Omega}\lambda_\beta a_{j,\beta}\right) P_i(xy)\\
=&\ P_{i} (xP_{j}(y) )+P_{j}(P_{i} (x)y)+ \left(\sum_{\beta\in \Omega}\lambda_\beta a_{j,\beta}\right) P_{i}(xy).
\end{align*}
This proves the first statement. In particular, for any $i\in I$, $(R,\,P_i)$ is a  Rota-Baxter algebra of weight $ \sum_{\omega\in \Omega}\lambda_\omega a_{i,\omega}$.
\end{proof}

\subsection{Polarized associative Yang-Baxter equations}
As noted in the introduction, one of our motivations for the notion of \mrbas comes from associative Yang-Baxter equations. Let us first recall the concept of weighted associative Yang-Baxter equations~\mcite{OP10,ZGZ18}.

Let $R$ be a unitary algebra. For an element $r=\sum_{i}u_i\ot v_i \in R\ot R$, we write
\begin{align*}
r_{12}=\sum_{i}u_i\ot v_i \ot 1, \, r_{13}=\sum_{i}u_i\ot 1\ot v_i , \, r_{23}=\sum_{i}1\ot u_i \ot v_i.
\end{align*}
Generalizing the notion of the associative Yang-Baxter equation introduced by Aguiar~\mcite{Agu01}, we have

\begin{defn}~\cite{OP10, ZGZ18}
\label{def:WAYBE}
Let $\lambda$ be a given element of $\bfk$ and $R$ a unitary algebra.
\begin{enumerate}
\item The equation
\begin{align}
r_{13}r_{12}-r_{12}r_{23}+r_{23}r_{13}=-\lambda r_{13} \mlabel{eq:AYBE}
\end{align}
is called the {\bf  associative Yang-Baxter equation (AYBE) of weight $\lambda$}.
\item
An element $r=\sum_{i}u_i\ot v_i \in R\ot R$ is called a {\bf solution of the associative Yang-Baxter equation of weight $\lambda$} in $R$ if it satisfies  Eq.~(\mref{eq:AYBE}).
\end{enumerate}
\end{defn}

\begin{remark}\label{rem:AYBE}
\begin{enumerate}
\item The Eq.~(\mref{eq:AYBE}) was first studied by Ogievetsky and Popov~\mcite{OP10} by the name of non-homogenous associative classical Yang-Baxter equation. They found that a solution $r\in R\ot R$ of the non-homogenous associative classical Yang-Baxter equation induces an interesting algebraic structure, called a weighted infinitesimal unitary bialgebra~\mcite{GZ, ZCGL18, ZGZ18, ZZL19}.

\item An analogous of associative Yang-Baxter equations was introduced in~\mcite{BGN1} given by
\begin{align*}
r_{12}r_{13}+r_{13}r_{23}-r_{23}r_{12}=\lambda (r_{13}+r_{31})(r_{23}+r_{32}),
\end{align*}
which was called an extended associative Yang-Baxter equation of mass $\lambda$.
\end{enumerate}
\end{remark}

The following result captures the relationship between an AYBE of weight $\lambda$ and a Rota-Baxter operator of weight $\lambda$, generalizing the relationship established in~\mcite{Agu00}.

\begin{prop}~\cite[Theorem~4.10]{ZGZ18}
Let $r=\sum_{i}u_i\ot v_i$ be a solution of the AYBE of weight $\lambda$ in $R$. Then the linear operator
\begin{align*}
P_r: R\rightarrow R, \quad x\mapsto P_r(x):=\sum_{i}u_i xv_i,
\end{align*}
is a Rota-Baxter operator of weight $\lambda$.
\mlabel{pro:AYBE}
\end{prop}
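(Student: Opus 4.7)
The plan is to expand the three terms on the left-hand side of the AYBE of weight $\lambda$ in $R^{\otimes 3}$, then apply a carefully chosen linear evaluation map $R^{\otimes 3}\to R$ (depending on two chosen elements $x,y\in R$) to both sides, and finally recognize each of the resulting expressions as one of the four terms in the Rota-Baxter equation of weight $\lambda$.

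Concretely, first I would write
\begin{align*}
r_{13}r_{12} &= \sum_{i,j} u_i u_j \otimes v_j \otimes v_i, \\
r_{12}r_{23} &= \sum_{i,j} u_i \otimes v_i u_j \otimes v_j, \\
r_{23}r_{13} &= \sum_{i,j} u_j \otimes u_i \otimes v_i v_j,
\end{align*}
so that Eq.~(\mref{eq:AYBE}) becomes an identity in $R^{\otimes 3}$ with right-hand side $-\lambda\sum_i u_i\otimes 1\otimes v_i$. Next, for fixed $x,y\in R$ I would apply the $\bfk$-linear map
$$\Phi_{x,y}:R\otimes R\otimes R\longrightarrow R,\qquad a\otimes b\otimes c\longmapsto a\,x\,b\,y\,c,$$
to the identity Eq.~(\mref{eq:AYBE}). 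The three terms become, respectively,
$$\sum_{i,j}u_iu_j x v_j y v_i,\qquad \sum_{i,j}u_i x v_iu_j y v_j,\qquad \sum_{i,j}u_j x u_i y v_iv_j,$$
and the right-hand side becomes $-\lambda\sum_i u_i xy v_i=-\lambda P_r(xy)$.

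Then I would match each sum with the appropriate expression in $P_r$. Directly from the definition of $P_r$,
$$P_r(P_r(x)y)=\sum_{i,j}u_iu_jxv_jyv_i,\qquad P_r(x)P_r(y)=\sum_{i,j}u_ixv_iu_jyv_j,$$
and after relabelling $i\leftrightarrow j$ in the third sum,
$$\sum_{i,j}u_jxu_iyv_iv_j=\sum_{i,j}u_ixu_jyv_jv_i=P_r(xP_r(y)).$$
Substituting these into the image of Eq.~(\mref{eq:AYBE}) under $\Phi_{x,y}$ gives
$$P_r(P_r(x)y)-P_r(x)P_r(y)+P_r(xP_r(y))=-\lambda P_r(xy),$$
which rearranges to the Rota-Baxter equation~(\mref{eq:RBidd}) of weight $\lambda$ for $P_r$, as desired.

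The argument is essentially bookkeeping; the only mild obstacle is making sure the three subscripts $12,13,23$ are tracked correctly when multiplying in $R^{\otimes 3}$ (so that the interior $x$ and $y$ of $\Phi_{x,y}$ land in the correct slots to reproduce the $v_iu_j$ and $u_jv_i$ patterns) and that the relabelling of summation indices in the third term matches $P_r(xP_r(y))$ rather than $P_r(P_r(x)y)$. Beyond this indexing check, no further identities on $R$ or on $r$ are needed.
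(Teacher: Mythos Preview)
Your proof is correct and follows exactly the approach used in the paper. The paper does not actually prove this proposition (it is cited from~\cite{ZGZ18}), but the same evaluation-map technique---defining a linear map $R\ot R\ot R\to R,\ u\ot v\ot w\mapsto uxvyw$, expanding the AYBE in $R^{\ot 3}$, and applying the map---is precisely what the paper uses to prove the polarized generalization in Proposition~\mref{prop:YBandRB}, so your argument matches the paper's method line for line.
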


In order to investigate the relationship between the associative Yang-Baxter equation and the classical Yang-Baxter equation, Aguiar~\mcite{Agu01} introduced a polarized form of the expression on the left hand side of the associative Yang-Baxter equation in Eq.~(\ref{eq:AYBE}):
\begin{align}
\{r,s\} :=r_{13}s_{12}-r_{12}s_{23}+r_{23}s_{13},
\mlabel{eq:22AYBE}
\end{align}
where $r, s\in R\ot R$. We call the corresponding equation
\begin{equation}
r_{13}s_{12}-r_{12}s_{23}+r_{23}s_{13} = 0
\mlabel{eq:paybe}
\end{equation}
the {\bf \paybe}, or {\bf \cpaybe} in short. More generally, we propose the following notion.

\begin{defn}
Let $\lambda$ be a given element of $\bfk$ and $t, s\in R\ot R$.
\begin{enumerate}
\item The equation
\begin{align}
r_{13}s_{12}-r_{12}s_{23}+r_{23}s_{13}=-\lambda s_{13} \mlabel{eq:2AYBE}
\end{align}
is called the {\bf  \paybe (\cpaybe) of weight $\lambda$}.
\item
Let $\Omega$ be a parameter set. A parameterized family $\{r_\omega\,|\,\omega\in \Omega\}$ of elements in an associative algebra $R$ is called a {\bf solution of the \paybe} in $R$ if every pair $(r,s):=(r_\alpha,r_\beta), \alpha, \beta \in \Omega$, satisfies  Eq.~(\mref{eq:2AYBE}).
\end{enumerate}
\end{defn}

\begin{remark}
As noted in Remark~\mref{rk:rbs}, Brzezi\`nski~\cite{Brz16} gave an analog of  \paybe, namely,
    \begin{align*}
r_{13}r_{12}-r_{12}r_{23}+s_{23}r_{13}=0,
s_{13}r_{12}-s_{12}s_{23}+s_{23}s_{13}=0,
    \end{align*}
and studied its relationship with his notion of Rota-Baxter systems.
\mlabel{rk:aybe}
\end{remark}

Generalizing Proposition~\mref{pro:AYBE}, we derive a \match Rota-Baxter operator of weight $\lambda$ from a \paybe of weight $\lambda$.

\begin{prop}
Let $\{r_\omega:=\sum_i u_{\omega,i}\ot v_{\omega,i}\in R\ot R\,|\,\omega\in \Omega\}$ be a solution of the \paybe of weight $\lambda$.
If $r_{12}s_{23}=s_{12}r_{23}$, then the linear operators
\begin{align*}
&P_\omega: R\rightarrow R, \quad x\mapsto P_\omega(x):=\sum_{i}u_{\omega,i} xv_{\omega,i}
\end{align*}
determine a \match Rota-Baxter algebra of weight $\lambda$.
\mlabel{prop:YBandRB}
\end{prop}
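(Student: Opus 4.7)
The plan is to mimic Aguiar's argument recalled in Proposition~\mref{pro:AYBE}: introduce a single auxiliary evaluation map that pushes the polarized identity $r_{13}s_{12}-r_{12}s_{23}+r_{23}s_{13}=-\lambda s_{13}$ down to the base algebra $R$. Concretely, for each fixed pair $x,y\in R$ I would define the $\bfk$-linear map
\[
\pi_{x,y}\colon R^{\otimes 3}\longrightarrow R,\qquad a\otimes b\otimes c\longmapsto a\,x\,b\,y\,c,
\]
and then read off, by direct expansion of tensor products, the four identities
\begin{align*}
\pi_{x,y}(r_{12}s_{23}) &= P_r(x)P_s(y), & \pi_{x,y}(r_{13}s_{12}) &= P_r(P_s(x)y),\\
\pi_{x,y}(r_{23}s_{13}) &= P_s(xP_r(y)), & \pi_{x,y}(s_{13}) &= P_s(xy),
\end{align*}
valid for any $r,s\in R\otimes R$ with associated operators $P_r,P_s$ as in Proposition~\mref{pro:AYBE}.

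I would then apply $\pi_{x,y}$ to the \cpaybe not in its ``natural'' ordering $(r,s)=(r_\alpha,r_\beta)$ but with the roles swapped, that is, to
\[
(r_\beta)_{13}(r_\alpha)_{12} - (r_\beta)_{12}(r_\alpha)_{23} + (r_\beta)_{23}(r_\alpha)_{13} \;=\; -\lambda\, (r_\alpha)_{13}.
\]
Substituting the four evaluations above and rearranging produces
\[
P_\beta(x)P_\alpha(y) \;=\; P_\alpha(xP_\beta(y))+P_\beta(P_\alpha(x)y)+\lambda P_\alpha(xy),
\]
which is essentially Eq.~(\mref{eq:RBid}) of weight $\lambda$, except that the two operator indices $\alpha,\beta$ on the left hand side appear in the reverse order.

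Closing this discrepancy is where the standing hypothesis $r_{12}s_{23}=s_{12}r_{23}$ enters: applied with $(r,s)=(r_\alpha,r_\beta)$ and then evaluated under $\pi_{x,y}$ it yields $P_\alpha(x)P_\beta(y)=P_\beta(x)P_\alpha(y)$. Substituting this equality into the previous display gives precisely the matching Rota-Baxter equation Eq.~(\mref{eq:RBid}), which completes the verification.

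The main obstacle is bookkeeping rather than algebraic depth: one has to realize that the naive substitution $(r,s)=(r_\alpha,r_\beta)$ into the \cpaybe does \emph{not} land on $P_\alpha(x)P_\beta(y)$ on the left hand side, and that the correct remedy is to first invert the roles of $\alpha$ and $\beta$ in the \cpaybe and afterwards use the commutation hypothesis to restore them. Beyond this indexing subtlety the computations are routine, and the specialization $\alpha=\beta$ (where the hypothesis is trivially satisfied and the \cpaybe of weight $\lambda$ collapses to the AYBE of weight $\lambda$) recovers Proposition~\mref{pro:AYBE} as a sanity check.
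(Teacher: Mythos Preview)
Your proof is correct and follows essentially the same route as the paper: define the evaluation map $R^{\otimes 3}\to R$, $a\otimes b\otimes c\mapsto axbyc$, push the \cpaybe\ through it, and use the commutation hypothesis to land on Eq.~(\mref{eq:RBid}). The only cosmetic difference is the order of operations: the paper substitutes $r_{12}s_{23}=s_{12}r_{23}$ at the tensor level \emph{before} evaluating (so the swapped product $P_\beta(x)P_\alpha(y)$ appears directly after applying $h$, and the resulting identity is Eq.~(\mref{eq:RBid}) with $\alpha,\beta$ relabeled), whereas you first evaluate the \cpaybe\ with the pair $(r_\beta,r_\alpha)$ and then apply the hypothesis at the algebra level to flip the left-hand side; both paths are equivalent and involve the same ingredients.
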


\begin{proof}
The trilinear map
$$ p: R \times R \times R \to     R,\quad   (u,v, w)
  \mapsto uxvyw \, \text{ for }\, u, v, w, x, y \in R, $$
induces a linear map
$$
      h
  \colon  R \ot R \ot R
  \to     R,
  \quad   u \ot v \ot w
  \mapsto uxvyw.
$$
Since $(r, s)$ is a solution of the \paybe of weight $\lambda$,  we have
\begin{align*}
r_{13}s_{12}-r_{12}s_{23}+r_{23}s_{13}=-\lambda s_{13}.
\end{align*}
If  $r_{12}s_{23}=s_{12}r_{23}$, then
\begin{align*}
r_{13}s_{12}-s_{12}r_{23}+r_{23}s_{13}=-\lambda s_{13},
\end{align*}
that is,
\begin{align*}
\sum_{i,j}u_{\alpha, i} u_{\beta, j}\ot v_{\beta, j}\ot v_{\alpha, i}-\sum_{i,j}u_{\beta, j} \ot v_{\beta, j}  u_{\alpha, i}\ot v_{\alpha, i} +\sum_{i,j} u_{\beta, j}\ot u_{\alpha, i}\ot v_{\alpha, i} v_{\beta, j}=-\lambda \sum_{i}u_{\alpha, i}\ot 1\ot v_{\alpha, i}.
\end{align*}
Applying $h$ to both sides of the equation, we obtain
\begin{align*}
\sum_{i,j}u_{\alpha, i} u_{\beta, j}x v_{\beta, j}y v_{\alpha, i}-\sum_{i,j}u_{\beta, j} x v_{\beta, j}  u_{\alpha, i}y v_{\alpha, i} +\sum_{i,j} u_{\beta, j}x u_{\alpha, i}y v_{\alpha, i} v_{\beta, j}=-\lambda \sum_{i}u_{\alpha, i}xy v_{\alpha, i},
\end{align*}
giving the desired \match Rota-Baxter equation:
\begin{equation*}
P_\alpha(P_\beta(x)y)-P_\beta(x)P_\alpha(y)+P_\beta(xP_\alpha(y)) =-\lambda P_\beta(xy). \hspace{3cm} \qedhere
\end{equation*}
\end{proof}

\section{\Match dendriform algebras and \match tridendriform algebras}
\mlabel{sec:dend}

Motivated by the natural connection of Rota-Baxter algebras with dendriform (tridendriform) algebras, we generalize the notion of dendriform algebras to that of \match dendriform (tridendriform) algebras. We then use edge decorated trees to construct \match dendriform algebras.

\subsection{Definitions and relations with \match Rota-Baxter algebras}
We begin with the notion of a \match dendriform algebra, generalizing the well-known notion of Loday~\mcite{Lo00}.
\begin{defn}
Let $\Omega$ be a \nes. A {\bf \match dendriform algebra} or more precisely an {\bf $\Omega$-\match dendriform algebra}, is a module $D$ together with a family of binary operations $\{\prec_\omega, \succ_\omega\mid \omega\in \Omega\}$, such that, for $ x, y, z\in D$ and $\alpha,\beta\in \Omega$, there is
\begin{align}
(x\prec_{\alpha} y) \prec_{\beta} z=\ & x \prec_{\alpha} (y\prec_{\beta} z)
+x\prec_{ \beta} (y \succ_{\alpha} z), \mlabel{eq:ddf1} \\
(x\succ_{\alpha} y)\prec_{\beta} z=\ & x\succ_{\alpha} (y\prec_{\beta} z),\quad \quad \quad \quad \quad \ \ \ \ \mlabel{eq:ddf2} \\
(x\prec_{\beta} y)\succ_{ \alpha}z
 +(x\succ_{\alpha} y) \succ_{\beta}z=\ & x\succ_{\alpha}(y \succ_{\beta} z). \mlabel{eq:ddf3}
\end{align}
\mlabel{de:mdend}
\end{defn}
Loday and Ronco~\mcite{LR04} also introduced the concept of a tridendriform algebra in their study of polytopes and Koszul duality. More generally, the concept of a  tridendriform algebra of weight $\lambda$, also called $\lambda$-tridendriform algebra, was introduced by Burgunder and Ronco~\mcite{BR10} and has a close relationship with brace algebras, $\lambda$-tridendriform bialgebras and  $\lambda$-Gerstenhaber-Voronov algebras.

Generalizing the notion of tridendriform algebras (with weight) to multiple triples of binary operators, we propose the following notion.
\begin{defn}
Let $\Omega$ be a \nes. A {\bf  \match tridendriform algebra} is a module $T$ equipped with
a family of binary operations $\{\prec_{\omega}, \succ_\omega, \cdot_\omega \,\mid\, \omega \in \Omega\}$ such that, for $ x, y, z\in T$ and $\alpha,\beta\in \Omega$,
\begin{align}
(x\prec_{\alpha} y)\prec_{\beta} z=\ & x\prec_{\alpha}(y\prec_{{\beta}}z) +x\prec_{\beta}(y\succ_{{\alpha}}z)+ x\prec_{\alpha}(y\cdot_\beta z), \mlabel{eq:tdf1}\\
(x\succ_{\alpha} y)\prec_{\beta} z=\ & x\succ_{\alpha}(y\prec_{\beta} z), \mlabel{eq:tdf2}\\
x\succ_{\alpha} (y\succ_{\beta} z) = \ & (x\prec_{{\beta}}y)\succ_{\alpha}z+(x\succ_{{\alpha}}y)\succ_{\beta}z +(x\cdot_\beta y)\succ_{\alpha}z, \mlabel{eq:tdf3} \\
(x\succ_{\alpha} y)\cdot_\beta  z=\  & x\succ_{\alpha}(y \cdot_\beta  z), \mlabel{eq:tdf4}\\
(x\prec_{\alpha} y)\cdot_\beta  z= \ & x \cdot_\beta (y\succ_{\alpha} z), \mlabel{eq:tdf5}\\
(x\cdot_\alpha y)\prec_{\beta} z= \ & x\cdot_\alpha (y\prec_{\beta} z),\mlabel{eq:tdf6} \\
(x\cdot_\alpha y)\cdot_\beta  z= \ & x\cdot_\alpha  (y\cdot_\beta  z).  \mlabel{eq:tdf7}
\end{align}
\end{defn}

\begin{prop}
Let $\Omega$ and $I$ be nonempty sets and let $A_i:\Omega\to \bfk, i\in I$, be a family of maps with finite supports, identified with $A_i=\{a_{i,\omega}\in \bfk\,|\,\omega\in \Omega\}$.
\begin{enumerate}
\item
Let $(T,\, \{\prec_\omega, \succ_\omega,\cdot_\omega\,\mid\, \omega \in \Omega\})$ be a \match tridendriform algebra.  Consider the linear combinations
\begin{align}
\odot_i:=\odot_{A_i}:=\sum_{\omega\in \Omega} a_{i,\omega} \odot_\omega, \quad \odot \in \{\prec, \succ, \cdot\}, \quad i\in I.
\mlabel{eq:dfav}
\end{align}
Then
$(T,\,\{\prec_i, \succ_i, \cdot_i\,\mid\, i\in I\})$ is a \match tridendriform algebra. In particular, in a \match tridendriform algebra, any linear combination $\prec_i, \succ_i, \cdot_i$ (for a fixed $i$) still gives a tridendriform algebra structure on $T$.
\mlabel{it:mtd}
\item
The same result holds for a \match dendriform algebra.
\mlabel{it:md}
\end{enumerate}
\mlabel{prop:mmdd}
\end{prop}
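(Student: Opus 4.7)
The plan is to mimic the proof of Proposition~\ref{lemm:twoopes} verbatim, replacing the Rota-Baxter equation by each of the seven tridendriform axioms in turn. Concretely, I would fix $i,j \in I$ and $x,y,z\in T$, expand
\[
(x \odot_i y) \odot_j z = \sum_{\alpha,\beta \in \Omega} a_{i,\alpha}\, a_{j,\beta}\, (x \odot_\alpha y) \odot_\beta z
\qquad (\odot \in \{\prec,\succ,\cdot\}),
\]
apply the corresponding axiom (\ref{eq:tdf1})--(\ref{eq:tdf7}) inside the double sum, and then re-collect the scalar coefficients using (\ref{eq:dfav}). The key bookkeeping point is that an axiom such as (\ref{eq:tdf1}) mixes the indices $\alpha,\beta$ on the right-hand side (the middle term $x\prec_{\beta}(y\succ_{\alpha} z)$ swaps them), so after summing against $a_{i,\alpha}a_{j,\beta}$ that term becomes $x \prec_j (y \succ_i z)$, which is exactly the shape required by the matching axiom with the new labels $i,j$. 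The same harmless swap occurs in (\ref{eq:tdf3}), while the remaining five axioms (\ref{eq:tdf2}), (\ref{eq:tdf4})--(\ref{eq:tdf7}) are straight bilinear identities and collapse immediately after exchanging the order of summation.

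Carrying out this recipe for each of the seven axioms produces the analogous identity for $\{\prec_i,\succ_i,\cdot_i \mid i \in I\}$, which establishes part~(\ref{it:mtd}). Part~(\ref{it:md}) for \match dendriform algebras is then the same argument restricted to axioms (\ref{eq:ddf1})--(\ref{eq:ddf3}), with all $\cdot_\omega$ terms deleted; no new idea is required. The final ``in particular'' clause is obtained by specializing $I = \{i\}$ to a singleton, so that a matching tridendriform (resp.\ dendriform) algebra indexed by a one-element set is precisely a tridendriform (resp.\ dendriform) algebra in the classical sense of Loday--Ronco and Burgunder--Ronco.

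I do not anticipate a genuine obstacle: the proof is a direct, mechanical verification driven entirely by the bilinearity of each operation $\odot_\omega$ together with the fact that the defining axioms are \emph{linear} in their operator subscripts (no index $\omega$ appears as a weight multiplier on the right-hand side, unlike in the Rota-Baxter case where the weight $\lambda_\beta$ had to be tracked). The only care needed is notational: one must write out the index swaps in (\ref{eq:tdf1}) and (\ref{eq:tdf3}) correctly so that, after summation, the $i$ and $j$ labels appear on the intended operations. Given the length of the computation, I would present the verification of (\ref{eq:tdf1}) in full detail as a template, then remark that the remaining axioms are proved identically, exactly as was done for the single Rota-Baxter identity in the proof of Proposition~\ref{lemm:twoopes}.
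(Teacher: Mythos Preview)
Your proposal is correct and follows essentially the same approach as the paper: expand each side as a double sum over $\alpha,\beta$, apply the corresponding axiom (\ref{eq:tdf1})--(\ref{eq:tdf7}) termwise, and re-collect using (\ref{eq:dfav}), tracking the index swaps exactly as you describe. The paper in fact writes out the verifications of (\ref{eq:tdf1}), (\ref{eq:tdf2}), (\ref{eq:tdf4}) and (\ref{eq:tdf5}) explicitly rather than just (\ref{eq:tdf1}), but this is a presentational choice and your plan to present one axiom in full as a template is entirely adequate.
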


\begin{proof}
We just verify Item~(\mref{it:mtd}). Item~(\mref{it:md}) can be verified in the same way. For $x, y\in T$ and $i, j \in I$, we have
\begin{align*}
&\ (x\prec_{i} y) \prec_{j} z\\
=&\ \sum_{\beta \in \Omega} a_{j, \beta} \left(\sum_{\alpha\in \Omega} a_{i,\alpha} x \prec_\alpha y\right) \prec_{\beta} z \quad \text{(by Eq.~(\mref{eq:dfav}))}\\
=&\ \sum_{\alpha\in \Omega}\sum_{\beta \in \Omega} a_{i,\alpha} a_{j, \beta} (x \prec_\alpha y)\prec_{\beta} z\\
=&\ \sum_{\alpha\in \Omega}\sum_{\beta \in \Omega} a_{i,\alpha} a_{j, \beta} \Big( x \prec_{\alpha} (y\prec_{\beta} z)
+x\prec_{ \beta} (y \succ_{\alpha} z)+x\prec_\alpha (y\cdot_\beta z)\Big)\quad \text{(by Eq.~(\mref{eq:tdf1}))}\\
=&\ \sum_{\alpha\in \Omega}\sum_{\beta \in \Omega} a_{i,\alpha} a_{j, \beta} x \prec_{\alpha} (y\prec_{\beta} z)
+\sum_{\alpha\in \Omega}\sum_{\beta \in \Omega} a_{i,\alpha} a_{j, \beta} x\prec_{ \beta} (y \succ_{\alpha} z)+\sum_{\alpha\in \Omega}\sum_{\beta \in \Omega} a_{i,\alpha} a_{j, \beta} x\prec_{ \alpha} (y \cdot_{\beta} z)\\
=&\ \sum_{\alpha\in \Omega} a_{i,\alpha} x \prec_{\alpha} \left(\sum_{\beta \in \Omega}a_{j, \beta} y\prec_{\beta} z\right)
+\sum_{\beta \in \Omega} a_{j, \beta} x\prec_{ \beta} \left(\sum_{\alpha\in \Omega}a_{i,\alpha} y \succ_{\alpha} z\right)+\sum_{\alpha\in \Omega} a_{i,\alpha} x \prec_{\alpha} \left(\sum_{\beta \in \Omega}a_{j, \beta} y\cdot_{\beta} z\right)\\
=&\ \sum_{\alpha\in \Omega} a_{i,\alpha} x \prec_{\alpha} \left(  y\prec_{j} z\right)
+\sum_{\beta \in \Omega} a_{j, \beta} x\prec_{ \beta} \left( y \succ_{i} z\right) +\sum_{\alpha\in \Omega} a_{i,\alpha} x \prec_{\alpha} \left(  y\cdot_{j} z\right)\quad \text{(by Eq.~(\ref{eq:dfav}) )}\\
=&\ x \prec_{i} (y\prec_{j} z)
+x\prec_{ j} (y \succ_{i} z)+x \prec_{i} (y\cdot_{j} z).
\end{align*}
By the same argument, we have
\begin{align*}
x\succ_{i} (y\succ_{j} z) = \ & (x\prec_{{j}}y)\succ_{i}z+(x\succ_{{i}}y)\succ_{j}z +(x\cdot_j y)\succ_{i}z.
\end{align*}
Further,
\begin{align*}
(x\succ_{i} y) \prec_{j} z=&
\ \sum_{\alpha\in \Omega} \sum_{\beta\in \Omega} a_{i,\alpha} a_{j, \beta} ( x \succ_\alpha y)\prec_\beta z\quad \text{(by Eq.~(\mref{eq:dfav}))}\\
=&\ \sum_{\alpha\in \Omega} \sum_{\beta\in \Omega} a_{i,\alpha} a_{j, \beta} x\succ_{\alpha} (y\prec_{\beta} z)\quad \text{(by Eq.~(\mref{eq:tdf2}))}\\
=&\ \sum_{\alpha\in \Omega} a_{i,\alpha} x\succ_{\alpha} \left(\sum_{\beta\in \Omega}a_{j, \beta} y\prec_{\beta} z\right)\\
=&\ \sum_{\alpha\in \Omega} a_{i,\alpha} x\succ_{\alpha} \left( y\prec_{j} z\right)\quad \text{(by Eq.~(\mref{eq:dfav}))}\\
=&\  x\succ_{i} ( y\prec_{j} z)\quad \text{(by Eq.~(\mref{eq:dfav}))}.
\end{align*}
We also have
\begin{align*}
(x\succ_{i} y)\cdot_j  z
=&\ \sum_{\alpha\in \Omega} \sum_{\beta\in \Omega} a_{i,\alpha} a_{j, \beta} ( x \succ_\alpha y)\cdot_\beta z\quad \text{(by Eq.~(\mref{eq:dfav}))}\\
=&\ \sum_{\alpha\in \Omega} \sum_{\beta\in \Omega} a_{i,\alpha} a_{j, \beta} x\succ_{\alpha} (y\cdot_{\beta} z)\quad \text{(by Eq.~(\mref{eq:tdf4}))}\\
=&\  x\succ_{i} ( y\cdot_{j} z)\quad \text{(by Eq.~(\mref{eq:dfav}))}.
\end{align*}
Similarly, we have
\begin{align*}
(x\cdot_i y)\prec_j z= \ & x\cdot_i (y\prec_j z).
\end{align*}
Moreover
\begin{align*}
(x\prec_{i} y)\cdot_j  z
=&\ \sum_{\alpha\in \Omega} \sum_{\beta\in \Omega} a_{i,\alpha} a_{j, \beta} ( x \prec_\alpha y)\cdot_\beta z\quad \text{(by Eq.~(\mref{eq:dfav}))}\\
=&\ \sum_{\alpha\in \Omega} \sum_{\beta\in \Omega} a_{i,\alpha} a_{j, \beta} x\cdot_\beta (y\succ_\alpha z)\quad \text{(by Eq.~(\mref{eq:tdf5}))}\\
=&\  x\cdot_j ( y\succ_{i} z)\quad \text{(by Eq.~(\mref{eq:dfav}))}.
\end{align*}
The last relation (Eq.~(\mref{eq:tdf7})) can be verified in the same way. In fact, it also follows from the property of matching associative algebras studied in Section~\mref{subsec:sc}. This completes the proof.
\end{proof}

We now establish the connections between \match \rbas, \match dendriform algebras and \match tridendriform algebras. For the classical case of one linear operator, see~\mcite{Agu01,E05}
\begin{theorem}
\begin{enumerate}
\item \label{it:RBTD2} A \match \rba $(R, \,\{P_{\omega} \mid \omega \in \Omega\} )$ of weight $\lambda_\Omega=\{\lambda_\omega\,|\,\omega\in \Omega\}$ induces a \match dendriform algebra $(R, \, \{\prec_{{\omega}}, \succ_{{\omega}}\, \mid \omega \in \Omega\})$, where
\begin{equation*}
x\prec_{{\omega}}y := xP_{\omega}(y)+\lambda_\omega xy,\,\, x\succ_{{\omega}}y := P_{\omega}(x)y, \, \,\text{ for }\, x,y \in R, \omega\in \Omega.
\end{equation*}
\item \label{it:RBTD1} A \match \rba $(R, \,\{P_{\omega} \mid \omega \in \Omega\} )$ of weight zero induces a \match dendriform algebra $(R, \, \{\prec_{{\omega}}, \succ_{{\omega}}  \, \mid \omega \in \Omega\})$, where
\begin{equation*}
x\prec_{{\omega}}y := xP_{\omega}(y),\,\, x\succ_{{\omega}}y := P_{\omega}(x)y, \,\text{ for }\, x,y \in R, \omega\in \Omega.
\end{equation*}
\item \label{thm:dend1}
A \match \rba $(R, \,\{P_{\omega} \mid \omega \in \Omega\} )$ of weight $\lambda_\Omega:=\{\lambda_\omega\,|\,\omega\in \Omega\}$ defines a \match tridendriform algebra
$(R, \, \{\prec_{{\omega}}, \succ_{{\omega}}, \cdot_\omega\, \mid \omega \in \Omega\})$, where
\begin{equation*}
x\prec_{{\omega}}y := xP_{\omega}(y),\,\, x\succ_{{\omega}}y := P_{\omega}(x)y, \,\, x\cdot_\omega y:=\lambda_\omega xy\,\,\text{ for }\, x, y \in R, \omega\in \Omega.
\mlabel{eq:RBTD2}
\end{equation*}

\end{enumerate}
\mlabel{thm:dend}
\end{theorem}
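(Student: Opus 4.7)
The plan is to prove all three parts by substituting the proposed operations into the defining axioms and reducing each instance either to associativity of $R$ or to the matching Rota-Baxter equation~\eqref{eq:RBid}. The three parts are not independent: part~\ref{it:RBTD1} is the weight-zero specialization of part~\ref{it:RBTD2}, and part~\ref{it:RBTD2} can in turn be recovered from part~\ref{thm:dend1} by merging $\prec_\omega$ and $\cdot_\omega$ into a single operation, since $xP_\omega(y)+\lambda_\omega xy$ is precisely $(x\prec_\omega y)+(x\cdot_\omega y)$ in the notation of~\ref{thm:dend1}. I would therefore establish~\ref{thm:dend1} first and harvest~\ref{it:RBTD2} and~\ref{it:RBTD1} from it.

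For part~\ref{thm:dend1}, I would check the seven axioms~\eqref{eq:tdf1}--\eqref{eq:tdf7} in turn. Five of them, namely~\eqref{eq:tdf2} and~\eqref{eq:tdf4}--\eqref{eq:tdf7}, involve exactly one $P_\omega$ on each side (after the scalars $\lambda_\omega$ coming from $\cdot_\omega$ are pulled through), so both sides become the same juxtaposition of four elements in $R$ and the identity is immediate from associativity; for instance $(x\succ_\alpha y)\prec_\beta z = P_\alpha(x)\,y\,P_\beta(z) = x\succ_\alpha(y\prec_\beta z)$. The substantive axioms are~\eqref{eq:tdf1} and~\eqref{eq:tdf3}. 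For~\eqref{eq:tdf3}, substitution turns the left-hand side into
\[
\bigl[P_\alpha(xP_\beta(y)) + P_\beta(P_\alpha(x)y) + \lambda_\beta P_\alpha(xy)\bigr]\,z,
\]
which equals $P_\alpha(x)P_\beta(y)\,z$ by~\eqref{eq:RBid} applied to $(x,y)$ with indices $(\alpha,\beta)$, and this is precisely $x\succ_\alpha(y\succ_\beta z)$. Axiom~\eqref{eq:tdf1} is mirror-symmetric: after factoring $x$ out on the left, what remains is~\eqref{eq:RBid} applied to $(y,z)$.

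Part~\ref{it:RBTD2} follows from part~\ref{thm:dend1} via the general observation that in any \match tridendriform algebra the operations $\prec'_\omega := \prec_\omega + \cdot_\omega$ and $\succ'_\omega := \succ_\omega$ satisfy the three \match dendriform axioms~\eqref{eq:ddf1}--\eqref{eq:ddf3}; the verification simply adds the tridendriform relations in the correct combinations, e.g.\ \eqref{eq:ddf1} is the sum of~\eqref{eq:tdf1}, \eqref{eq:tdf5}, \eqref{eq:tdf6}, and~\eqref{eq:tdf7}. Alternatively, I would repeat the direct substitution, noting that the extra $\lambda_\alpha\lambda_\beta xyz$ terms appearing on both sides of~\eqref{eq:ddf1} cancel in pairs while the remaining $P_\omega$-terms collapse through~\eqref{eq:RBid}. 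Part~\ref{it:RBTD1} is then the specialization $\lambda_\omega = 0$ for all $\omega\in\Omega$, which simultaneously trivializes every $\cdot_\omega$.

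The only real hazard is bookkeeping: in axioms~\eqref{eq:ddf1}, \eqref{eq:ddf3}, \eqref{eq:tdf1}, and~\eqref{eq:tdf3} the indices $\alpha$ and $\beta$ are distributed asymmetrically across the three right-hand terms, so one must apply~\eqref{eq:RBid} with the correct pairing of indices each time, otherwise the scalar $\lambda_\beta$ ends up attached to the wrong operator and the identity fails to balance.
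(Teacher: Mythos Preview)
Your proposal is correct. The only difference from the paper's proof is organizational: the paper verifies parts~\ref{it:RBTD2} and~\ref{thm:dend1} independently by direct substitution (and then deduces~\ref{it:RBTD1} from~\ref{it:RBTD2} by setting all weights to zero), whereas you start with~\ref{thm:dend1} and derive~\ref{it:RBTD2} from it via the general observation that merging $\prec_\omega$ and $\cdot_\omega$ into $\prec'_\omega:=\prec_\omega+\cdot_\omega$ in any \match tridendriform algebra yields a \match dendriform algebra. That reduction is not recorded in the paper, but it is valid: one checks, as you indicate, that \eqref{eq:ddf1} is the sum of \eqref{eq:tdf1}, \eqref{eq:tdf5}, \eqref{eq:tdf6} and \eqref{eq:tdf7}; that \eqref{eq:ddf2} is the sum of \eqref{eq:tdf2} and \eqref{eq:tdf4}; and that \eqref{eq:ddf3} is exactly \eqref{eq:tdf3}. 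Your route thus saves one full round of direct verification at the cost of inserting this short structural lemma, while the paper's route is slightly more self-contained but duplicates some computation. One small slip: in your discussion of \eqref{eq:tdf3} the displayed three-term expression is actually the right-hand side of \eqref{eq:tdf3} as written in the paper, not the left-hand side, though the computation itself is correct.
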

\begin{proof}
(\ref{it:RBTD2}). For $x, y, z\in R$ and $\alpha, \beta\in \Omega,$
\begin{align*}
(x\prec_{{\alpha}}y)\prec_{{\beta}}z
=&\ (xP_{\alpha}(y)+\lambda_\alpha xy)\prec_{{\beta}}z\\
=& \ (xP_{\alpha}(y)+\lambda_\alpha xy)P_{\beta}(z)+\lambda_\beta(xP_{\alpha}(y)
+\lambda_\alpha xy)z\\
=&\ xP_{\alpha}(y)P_{\beta}(z)+\lambda_\alpha xyP_{\beta}(z)+\lambda_\beta xP_{\alpha}(y)z+\lambda_\beta \lambda_\alpha xyz\\
=&\ x\left(P_{\alpha}(yP_{\beta}(z))+P_{\beta}(P_{\alpha}(y)z)
+\lambda_\beta P_\alpha(yz)\right) +\lambda_\alpha xyP_{\beta}(z)
+\lambda_\beta xP_{\alpha}(y)z+\lambda_\alpha\lambda_\beta xyz\\
=&\ xP_{\alpha}\Big(yP_{\beta}(z)+\lambda_\beta yz\Big)
+\lambda_\alpha xyP_{\beta}(z)
+\lambda_\alpha\lambda_\beta xyz+xP_{\beta}(P_{\alpha}(y)z)
+\lambda_\beta xP_{\alpha}(y)z\\
=&\  xP_{\alpha}\Big(yP_{\beta}(z)+\lambda_\beta yz\Big)+\lambda_\alpha x\Big(yP_{\beta}(z)+\lambda_\beta yz\Big)+\Big(xP_{\beta}(P_{\alpha}(y)z)+\lambda_\beta x(P_{\alpha}(y)z)\Big)\\
=& \ x\prec_{\alpha} (yP_{\beta}(z)+\lambda_\beta yz)+x\prec_{\beta} (P_{\alpha}(y)z)\\
=& \ x \prec_{\alpha} (y\prec_{\beta} z)
+x\prec_{ \beta} (y \succ_{\alpha} z),\\
(x\succ_{{\alpha}}y)\prec_{{\beta}}z=&\ (P_{\alpha}(x)y)\prec_{ \beta}z
=P_{\alpha}(x)yP_{\beta}(z)+\lambda_\beta P_{\alpha} (x)yz
=\ x\succ_{{\alpha}} (yP_{\beta}(z)+\lambda_\beta yz)
=x\succ_{{\alpha}}(y\prec_{{\beta}}z),\\
x\succ_{{\alpha}}(y\succ_{{\beta}}z) =&\ x\succ_{{\alpha}}(P_\beta (y)z)=P_{\alpha}(x)(P_{\beta}(y)z)=P_{\alpha}(x)P_{\beta}(y)z\\
=&\ P_{\alpha}(xP_{\beta}(y))z+P_{\beta}(P_{\alpha}(x)y)z+\lambda_\beta P_\alpha(xy)z\\
=&\ P_{\alpha}(xP_{\beta}(y)+\lambda_\beta xy)z+P_{\beta}(P_{\alpha}(x)y)z\\
= &\ (x\prec_{\beta} y)\succ_{ \alpha}z
 +(x\succ_{\alpha} y) \succ_{\beta}z,
\end{align*}
as desired.
\smallskip

\noindent
(\ref{it:RBTD1}). This follows from Item~(\ref{it:RBTD2}) by taking $\lambda=0$.
\smallskip

\noindent
(\ref{thm:dend1}).
For $x, y, z\in R$ and $\alpha, \beta\in \Omega,$
\begin{align*}
(x\prec_{{\alpha}}y)\prec_{{\beta}}z=\ &xP_{\alpha}(y)P_{\beta}(z)\\
=\ & x\Big(P_{\alpha}(yP_{\beta}(z))+P_{\beta}(P_{\alpha}(y)z)+\lambda_\beta P_{\alpha}(yz)\Big)\\
=\ & x\prec_{\alpha}(y\prec_{{\beta}}z) +x\prec_{\beta}(y\succ_{{\alpha}}z)+x\prec_{\alpha}(y\cdot_\beta z),\\
(x\succ_{{\alpha}}y)\prec_{{\beta}}z=\ &(P_{\alpha}(x)y)\prec_{{\beta}}z
=P_{\alpha}(x)(yP_{\beta}(z))
=x\succ_{{\alpha}}(y\prec_{{\beta}}z),\\
x\succ_{{\alpha}}(y\succ_{{\beta}}z)=\ & P_{\alpha}(x)P_{\beta}(y)z\\
=\ & \Big(P_{\alpha}(xP_{\beta}(y))+P_{\beta}(P_{\alpha}(x)y)+\lambda_\beta P_{\alpha}(xy)\Big)z\\
=\ & (x\prec_{{\beta}}y)\succ_{\alpha}z+(x\succ_{{\alpha}}y)\succ_{\beta}z+(x\cdot_\beta y)\succ_{\alpha}z,\\
(x\succ_{\alpha}y)\cdot_\beta z
=\ &(P_{\alpha}(x)y)\cdot_\beta z
=\lambda_\beta P_{\alpha}(x)yz
=P_{\alpha}(x)(\lambda_\beta yz)
= P_{\alpha}(x)(y\cdot_\beta z)
=x\succ_{\alpha}(y\cdot_\beta z),\\
(x\prec_{\alpha}y)\cdot_\beta z=\ &(xP_{\alpha}(y))\cdot_\beta z
=\lambda_\beta xP_{\alpha}(y)z
=\lambda_\beta x(P_{\alpha}(y)z)
= x\cdot_\beta (P_{\alpha}(y)z)
= x\cdot_\beta (y\succ_{\alpha}z),\\
(x\cdot_\beta y)\prec_{\alpha}z=\ &(\lambda_\beta xy)\prec_{\alpha}z
=(\lambda_\beta  xy) P_{\alpha}(z)
=\lambda_\beta  x(y P_{\alpha}(z))
= x\cdot_\beta (y\prec_{\alpha}z),\\
(x\cdot_\alpha y)\cdot_\beta z=\ &(\lambda_\alpha xy)\cdot_\beta z
=\lambda_\alpha \lambda_\beta (xyz)=\lambda_\alpha x(\lambda_\beta yz)
=x\cdot_\alpha(y\cdot_\beta z),
\end{align*}
as required.
\end{proof}

\subsection{\Match dendriform algebra on typed planar binary trees}
\mlabel{ss:tree}
The notion of a typed rooted tree was used to construct multiple (that is, \match) pre-Lie algebras~\mcite{Foi18} with motivation from renormalization of stochastic PDEs~\mcite{BHZ}. See Section~\mref{ss:mpl}. Here we apply a similarly defined typed planar binary trees to construct \match dendriform algebras, generalizing the construction of dendriform algebras on planar binary trees.

A ${\bf planar\ tree}$ is an oriented graph with an embedding into the plane and with  a preferred vertex called the $\mathbf{root}$. It is {\bf binary} when any vertex is trivalent (one root and two leaves)~\mcite{LR98, Gub}. The root is at the bottom of the tree. For each $n\geq 0$, the set of planar binary trees with $n$ interior
vertices will be denoted by $Y_n$. Let $\frakD$ be a nonempty set and let $\frakT$ be a set with at least two elements including a special element $e$.
For each $n\geq 0$, let $Y_{\mathfrak{D}, \mathfrak{T}}(n)$ denote the set of {\bf $\mathfrak{D}$-decorated $\mathfrak{T}$-typed planar binary trees} in $Y_n$, consisting of planar binary trees in $Y_n$ together with
\begin{enumerate}
\item
a decoration of the internal vertices by elements of $\mathfrak{D}$,
\item
a decoration of the internal edges (that is, connecting internal vertices) by elements of $\mathfrak{T}\backslash \{e\}$, and
\item
a decoration, usually suppressed, of the external/leaf edges (except the root edge) by $e$.
\end{enumerate}
The convenience of including $e$ as the {\bf empty types decoration} will become apparent below.
Adapting the drawing in~\mcite{BHZ,Foi18}, the following are some {\bf $\mathfrak{D}$-decorated $\mathfrak{T}$-typed planar binary trees} with two edge types $\textcolor{red}{|}$ (solid red) and  $\textcolor{green}{ \vdots}$ (dotted green) decorating the inner edges. Again the decoration by $e$ on the external edges are suppressed. Alternatively, one can simply decorate the edges by letters instead of colors or patterns.
\begin{align*}
Y_{\mathfrak{D}, \mathfrak{T}}(0)=\{|\},\ \
Y_{\mathfrak{D}, \mathfrak{T}}(1)&=\left\{ \YYY{\coordinate (a) at (-0.6,0.6);
\draw (o)--+(0,-1) (o)--+(1.2,1.2)
 (a)--+(-0.6,0.6);
\draw(o)--(a);
\path (o)+(-45:0.4) node {$a$}
(a)+(-135:0.4) node {}
(-0.05,0.5) node {};
},
\cdots \right\},\ \
Y_{\mathfrak{D}, \mathfrak{T}}(2)=\left\{ \YYY{\coordinate (a) at (-0.6,0.6);
\draw (o)--+(0,-1) (o)--+(1.2,1.2)
(a)--+(0.6,0.6) (a)--+(-0.6,0.6);
\draw[red](o)--(a);
\path (o)+(-45:0.4) node {$b$}
(a)+(-135:0.4) node {$a$}
(-0.05,0.5) node {};
}\, ,
\YYY{\coordinate (a) at (-0.6,0.6);
\draw (o)--+(0,-1) (o)--+(1.2,1.2)
(a)--+(0.6,0.6) (a)--+(-0.6,0.6);
\draw[green,densely dotted](o)--(a);
\path (o)+(-45:0.4) node {$b$}
(a)+(-135:0.4) node {$a$}
(-0.05,0.5) node {};
}\, ,
\YYY{\coordinate (b) at (0.6,0.6);
\draw (o)--+(0,-1) (o)--+(-1.2,1.2)
(b)--+(0.6,0.6) (b)--+(-0.6,0.6);
\draw[red](o)--(b);
\path (o)+(-45:0.4) node {$a$}
(b)+(-45:0.4) node {$b$};
}\, ,
\YYY{\coordinate (b) at (0.6,0.6);
\draw (o)--+(0,-1) (o)--+(-1.2,1.2)
(b)--+(0.6,0.6) (b)--+(-0.6,0.6);
\draw[green,densely dotted](o)--(b);
\path (o)+(-45:0.4) node {$a$}
(b)+(-45:0.4) node {$b$};
},
\cdots \right\},\\
Y_{\mathfrak{D}, \mathfrak{T}}(3)&=\left\{ \YYY{\coordinate (b) at (-0.5,0.5);
\coordinate (a) at (-0.9,0.9);
\draw (o)--+(0,-1) (o)--+(1.2,1.2) (b)--+(0.7,0.7)
(a)--+(0.3,0.3) (a)--+(-0.3,0.3);
\draw[red](o)--(a);
\path (o)+(-45:0.4) node {$c$}
(a)+(-135:0.4) node {$a$}
(b)+(-135:0.4) node {$b$}
(-0.5,0.9) node {}
(-0,0.5) node {};
}\, ,
\YYY{\coordinate (b) at (-0.5,0.5);
\coordinate (a) at (-0.9,0.9);
\draw (o)--+(0,-1) (o)--+(1.2,1.2) (b)--+(0.7,0.7)
(a)--+(0.3,0.3) (a)--+(-0.3,0.3);
\draw[green,densely dotted](o)--(a);
\path (o)+(-45:0.4) node {$c$}
(a)+(-135:0.4) node {$a$}
(b)+(-135:0.4) node {$b$}
(-0.5,0.9) node {}
(-0,0.5) node {};
}\, ,
\YYY{\coordinate (a) at (-0.7,0.7);
\coordinate (c) at (0.7,0.7);
\draw (o)--+(0,-1)
(a)--+(0.5,0.5) (a)--+(-0.5,0.5)
(c)--+(0.5,0.5) (c)--+(-0.5,0.5);
\draw[red](o)--(a);
\draw[green,densely dotted](o)--(c);
\path (o)+(-45:0.4) node {$b$}
(a)+(-135:0.4) node {$a$}
(c)+(-45:0.4) node {$c$};
}\, ,
\YYY{\coordinate (a) at (-0.7,0.7);
\coordinate (c) at (0.7,0.7);
\draw (o)--+(0,-1)
(a)--+(0.5,0.5) (a)--+(-0.5,0.5)
(c)--+(0.5,0.5) (c)--+(-0.5,0.5);
\draw[red](o)--(a);
\draw[red](o)--(c);
\path (o)+(-45:0.4) node {$b$}
(a)+(-135:0.4) node {$a$}
(c)+(-45:0.4) node {$c$};
}\, ,
\YYY{\coordinate (a) at (-0.7,0.7);
\coordinate (c) at (0.7,0.7);
\draw (o)--+(0,-1)
(a)--+(0.5,0.5) (a)--+(-0.5,0.5)
(c)--+(0.5,0.5) (c)--+(-0.5,0.5);
\draw[green,densely dotted](o)--(a);
\draw[red](o)--(c);
\path (o)+(-45:0.4) node {$b$}
(a)+(-135:0.4) node {$a$}
(c)+(-45:0.4) node {$c$};
},
\cdots\right\},
\end{align*}
with $a, b, c \in \mathfrak{D}$ and with $|\in Y_{\frakD,\frakT}(0)$ standing for the unique tree with one leaf.

Consider the disjoint union and the resulting direct sum
$$Y_{\mathfrak{D}, \mathfrak{T}}:= \bigsqcup_{n\geq 1}Y_{\mathfrak{D}, \mathfrak{T}}(n)\,\text{ and }\,  \mathrm{DD}_{\mathfrak{D}, \mathfrak{T}}:= \bfk Y_{\mathfrak{D}, \mathfrak{T}} = \bigoplus_{n\geq 1} \bfk Y_{\mathfrak{D}, \mathfrak{T}}(n).$$

A $\mathfrak{D}$-decorated $\mathfrak{T}$-typed planar binary tree $T$ in $Y_{\mathfrak{D}, \mathfrak{T}}(n)$ is called an {\bf $n$-$\mathfrak{D}$-decorated $\mathfrak{T}$-typed planar binary tree} or {\bf $n$-tree} for simplicity.
The $\mathbf{depth}$ $\mathrm{dep}(T)$ of a $\mathfrak{D}$-decorated $\mathfrak{T}$-typed planar binary tree $T$ is the maximal length of linear chains from the root to the leaves of the tree.
For example,
$$\dep(|)=0\,\text{ and }\, \dep({\YY{\node[scale=0.8] at (0.75,-0.55) {{$a$}};}}) = 1.$$

Let $T\in Y_{\mathfrak{D}, \mathfrak{T}}(m)$ and  $T'\in Y_{\mathfrak{D}, \mathfrak{T}}(n)$ be $\mathfrak{D}$-decorated $\mathfrak{T}$-typed planar binary trees,  $d \in \mathfrak{D}$ and $t_1, t_2 \in \mathfrak{T}$.  The
{\bf typed grafting}  $\vee_{d, t_1,t_2}$ of $T$ and $T'$ on $d$ is the $(n + m + 1)$-tree $T\vee_{d, t_1,t_2} T'\in Y_{\mathfrak{D}, \mathfrak{T}}(m+n+1)$,
obtained by joining the roots of $T$ and $T'$ to a new root decorated by $d$, via two new edges decorated by $t_1$ on the left edge and $t_2$ on the right one.
Then for any $\mathfrak{D}$-decorated $\mathfrak{T}$-typed planar binary tree $T\in Y_{\mathfrak{D}, \mathfrak{T}}(n)$ with $n\geq 1$, there exist unique elements
$T^l\in Y_{\mathfrak{D}, \mathfrak{T}}(k)$, $T^r\in Y_{\mathfrak{D}, \mathfrak{T}}(n-k-1)$ , $d\in \mathfrak{D}$,  and $t_1, t_2 \in \frakT$ such that
\begin{align*}
T=T^l\vee_{d, t_1, t_2}T^r,
\end{align*}
where $T^l$ and $T^r$ are the left (resp. right) branch of $T$. As noted above, the empty type decoration $e\in \frakT$ only decorates the leaf edges. For instance,
$$| \vee_{a, e,e}|=\YYY{\coordinate (a) at (-0.6,0.6);
\draw (o)--+(0,-1) (o)--+(1.2,1.2)
 (a)--+(-0.6,0.6);
\draw(o)--(a);
\path (o)+(-45:0.4) node {$a$}
(a)+(-135:0.4) node {}
(-0.05,0.5) node {};
} \, , \YYY{\coordinate (a) at (-0.6,0.6);
\draw (o)--+(0,-1) (o)--+(1.2,1.2)
 (a)--+(-0.6,0.6);
\draw(o)--(a);
\path (o)+(-45:0.4) node {$a$}
(a)+(-135:0.4) node {}
(-0.05,0.5) node {};
}\vee_{b, t_1,e}|  =\YYY{\coordinate (a) at (-0.6,0.6);
\draw (o)--+(0,-1) (o)--+(1.2,1.2)
(a)--+(0.6,0.6) (a)--+(-0.6,0.6);
\draw[red](o)--(a);
\path (o)+(-45:0.4) node {$b$}
(a)+(-135:0.4) node {$a$}
(-0.05,0.5) node {};
}\, ,\YYY{\coordinate (a) at (-0.7,0.7);
\coordinate (c) at (0.7,0.7);
\draw (o)--+(0,-1)
(a)--+(0.5,0.5) (a)--+(-0.5,0.5)
(c)--+(0.5,0.5) (c)--+(-0.5,0.5);
\draw[red](o)--(a);
\draw[green,densely dotted](o)--(c);
\path (o)+(-45:0.4) node {$b$}
(a)+(-135:0.4) node {$a$}
(c)+(-45:0.4) node {$c$};
}= \YYY{\coordinate (a) at (-0.6,0.6);
\draw (o)--+(0,-1) (o)--+(1.2,1.2)
 (a)--+(-0.6,0.6);
\draw(o)--(a);
\path (o)+(-45:0.4) node {$a$}
(a)+(-135:0.4) node {}
(-0.05,0.5) node {};
}\vee_{b, t_1, t_2}\YYY{\coordinate (a) at (-0.6,0.6);
\draw (o)--+(0,-1) (o)--+(1.2,1.2)
 (a)--+(-0.6,0.6);
\draw(o)--(a);
\path (o)+(-45:0.4) node {$c$}
(a)+(-135:0.4) node {}
(-0.05,0.5) node {};
},$$
where $t_1=\textcolor{red}{|}$ and  $t_2=\textcolor{green}{ \vdots}$ are two types and $a, b,c \in \mathfrak{D}$.

With these notions, we now proceed to construct a \match dendriform algebra on $\mathfrak{D}$-decorated $\Omega$-typed planar binary trees.

\begin{defn}
Let $\frakD$ and $\frakT$ be as defined above and let $\Omega=\frakT\backslash \{e\}$. Define a set $\{\prec_{\omega}, \succ_{\omega}  \, \mid \omega \in \Omega\}$ of binary operations  on $\mathrm{DD}_{\mathfrak{D}, \Omega}=\oplus_{n\geq 1} \bfk Y_{\mathfrak{D}, \Omega}(n)$ by the following recursion on the depth of trees.
\begin{enumerate}
\item $| \succ_{\omega}T=T\prec_{\omega}| =T$  and  $| \prec_{\omega}T=T\succ_{\omega}| =0$  for $\omega\in \Omega$  and $T \in Y_{\mathfrak{D}, \Omega}(n), n\geq 1$. \label{it:md61}
\item \label{it:md62}
For $T=T^l\vee_{a, \alpha, \beta}T^r$ and $U=U^l\vee_{b, \gamma, \delta}U^r$, define
    \begin{enumerate}
   \item \label{itt:md621}
    $T \prec_{\omega}U:= T^l \vee_{a, \alpha, \beta} (T^r\prec_\omega U)+T^l\vee_{a, \alpha, \omega}(T^r \succ_\beta U)$,
\item
$T \succ_{\omega}U:= (T\prec_\gamma U^l) \vee_{b, \omega, \delta}U^r+(T \succ_\omega U^l)\vee_{b, \gamma, \delta}U^r$.
    \end{enumerate}
\end{enumerate}\mlabel{def:md6}
\end{defn}

\begin{exam}
The following are two examples.
\begin{align*}
\YYY{\coordinate (a) at (-0.6,0.6);
\draw (o)--+(0,-1) (o)--+(1.2,1.2)
 (a)--+(-0.6,0.6);
\draw(o)--(a);
\path (o)+(-45:0.4) node {$a$}
(a)+(-135:0.4) node {}
(-0.05,0.5) node {};
} \prec_\alpha \YYY{\coordinate (a) at (-0.6,0.6);
\draw (o)--+(0,-1) (o)--+(1.2,1.2)
 (a)--+(-0.6,0.6);
\draw(o)--(a);
\path (o)+(-45:0.4) node {$b$}
(a)+(-135:0.4) node {}
(-0.05,0.5) node {};
}=\YYY{\coordinate (b) at (0.6,0.6);
\draw (o)--+(0,-1) (o)--+(-1.2,1.2)
(b)--+(0.6,0.6) (b)--+(-0.6,0.6);
\draw[red](o)--(b);
\path (o)+(-45:0.4) node {$a$}
(b)+(-45:0.4) node {$b$}
(0.05,0.5) node {$\alpha$};
}\, ,
\YYY{\coordinate (a) at (-0.6,0.6);
\draw (o)--+(0,-1) (o)--+(1.2,1.2)
 (a)--+(-0.6,0.6);
\draw(o)--(a);
\path (o)+(-45:0.4) node {$a$}
(a)+(-135:0.4) node {}
(-0.05,0.5) node {};
} \succ_\beta \YYY{\coordinate (a) at (-0.6,0.6);
\draw (o)--+(0,-1) (o)--+(1.2,1.2)
 (a)--+(-0.6,0.6);
\draw(o)--(a);
\path (o)+(-45:0.4) node {$b$}
(a)+(-135:0.4) node {}
(-0.05,0.5) node {};
}=\YYY{\coordinate (a) at (-0.6,0.6);
\draw (o)--+(0,-1) (o)--+(1.2,1.2)
(a)--+(0.6,0.6) (a)--+(-0.6,0.6);
\draw[green,densely dotted](o)--(a);
\path (o)+(-45:0.4) node {$b$}
(a)+(-135:0.4) node {$a$}
(-0.03,0.65) node {$\beta$};
},
\end{align*}
where $a, b  \in \mathfrak{D}$, $\alpha=\textcolor{red}{|}$ and $\beta=\textcolor{green}{ \vdots}$.
\end{exam}

\begin{prop}
Let $\frakD$ and $\Omega$ be nonempty sets. Then the pair $(\mathrm{DD}_{\mathfrak{D}, \Omega}, \{\prec_\omega, \succ_\omega\mid \omega \in \Omega\})$ is a \match dendriform algebra.
\end{prop}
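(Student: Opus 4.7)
My plan is to verify the three matching dendriform axioms of Definition~\mref{de:mdend} by induction on $n(x)+n(y)+n(z)$, where $n(T)$ denotes the number of internal vertices of $T \in Y_{\frakD,\Omega}$. By the bilinearity of each $\prec_\omega$ and $\succ_\omega$, it is enough to verify the identities on basis elements, that is, on individual decorated typed planar binary trees.

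The base case handles the situation in which at least one of $x, y, z$ equals $|$. By Definition~\mref{def:md6}(\mref{it:md61}), $|$ is a right identity for every $\prec_\omega$ and a left identity for every $\succ_\omega$, while $|\prec_\omega T = 0 = T\succ_\omega |$. Substituting $x=|$, $y=|$, or $z=|$ into each of (\mref{eq:ddf1})--(\mref{eq:ddf3}) and using these rules reduces both sides to the same expression term by term; for instance, in (\mref{eq:ddf2}) the three sub-cases collapse to $y\prec_\beta z = y\prec_\beta z$, $0=0$, and $x\succ_\alpha y = x\succ_\alpha y$ respectively.

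For the inductive step I assume $x,y,z$ are all different from $|$ and write
\[ x = x^l \vee_{a,\sigma_1,\tau_1} x^r,\qquad y = y^l \vee_{b,\sigma_2,\tau_2} y^r,\qquad z = z^l \vee_{c,\sigma_3,\tau_3} z^r. \]
For axiom (\mref{eq:ddf1}) I expand $x\prec_\alpha y$ by Definition~\mref{def:md6}(\mref{itt:md621}) into two grafts of the form $x^l\vee_{a,\sigma_1,-}(\cdots)$, then apply $\prec_\beta z$ to each, again by (\mref{itt:md621}), producing four trees. On the right-hand side of (\mref{eq:ddf1}) I expand $x\prec_\alpha(y\prec_\beta z)$ and $x\prec_\beta(y\succ_\alpha z)$ using the decomposition of $x$, obtaining four more trees of the same outer skeleton $x^l\vee_{a,\sigma_1,-}(\cdots)$. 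Grouping the eight terms on both sides according to the label in the right slot of the top graft ($\tau_1$, $\alpha$, or $\beta$), each matched pair reduces, after stripping the common outer graft $x^l\vee_{a,\sigma_1,-}$, to one of the three matching dendriform identities applied to the triple $(x^r,y,z)$, which has strictly smaller total vertex count and hence holds by induction. Axiom (\mref{eq:ddf3}) is handled symmetrically, decomposing $z$ instead of $x$ and using part (b) of Definition~\mref{def:md6}(\mref{it:md62}). Axiom (\mref{eq:ddf2}) is the mixed case: on the left one expands $x\succ_\alpha y$ via the decomposition of $y$ and then peels off the outer $\prec_\beta$ with no decomposition needed of the left factor; on the right one expands $\succ_\alpha$ with respect to $(y\prec_\beta z)$, which itself expands via $y^l$; the two expansions match after applying induction to smaller triples.

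The main obstacle is purely combinatorial bookkeeping: tracking how the new parameter $\omega\in\{\alpha,\beta\}$ is shuffled with the existing top-graft label $\tau_i$ in the two summands of Definition~\mref{def:md6}(\mref{it:md62}), and verifying that across the two sides of each axiom the labels appearing in the top slot pair up bijectively. Once this pairing is made explicit, each pair of matched terms reduces by induction to one of (\mref{eq:ddf1})--(\mref{eq:ddf3}) on a strictly smaller triple, or to the tautology $T=T$; the remaining verifications are routine expansions.
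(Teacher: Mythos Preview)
Your proposal is correct and follows essentially the same approach as the paper: structural induction, decomposing the first argument for (\mref{eq:ddf1}), the middle argument for (\mref{eq:ddf2}), and the last argument for (\mref{eq:ddf3}). The only cosmetic differences are that the paper inducts on $\dep(S)+\dep(T)+\dep(U)$ rather than vertex count, takes the all-depth-one case as its explicit base, and for (\mref{eq:ddf2}) notes that after decomposing the middle tree the two expansions match term by term without any further appeal to the induction hypothesis.
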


\begin{proof}
For $\alpha, \beta \in \Omega$ and $S, T, U \in \mathrm{DD}_{\mathfrak{D}, \Omega}$, we may write
\begin{align*}
S:=S^l\vee_{a, \gamma, \delta}S^r\, , T:=T^l \vee_{b, \iota, \kappa}T^r\, , U:=U^l \vee_{c, \mu, \nu} U^r
\end{align*}
with $a, b, c \in \mathfrak{D}$, $\gamma, \delta, \iota, \kappa, \mu, \nu \in \Omega$ and $S^l, S^r, T^l, T^r, U^l, U^r \in \sqcup_{n\geq 0}Y_{\mathfrak{D}, \mathfrak{T}}(n)$. We prove this result by induction on $\dep (S)+\dep(T)+\dep(U)\geq 3$. For the initial step of $\dep (S)+\dep(T)+\dep(U)= 3$, we have $\dep (S)=\dep(T)=\dep(U)=1$ and so
\begin{align*}
S={\YY{\node[scale=0.8] at (0.75,-0.55) {{$a$}};}}\, , T={\YY{\node[scale=0.8] at (0.75,-0.55) {{$b$}};}}\, \text{ and }\, U={\YY{\node[scale=0.8] at (0.75,-0.55) {{$c$}};}}.
\end{align*}
Then
\begin{align*}
&\ (S\prec_\alpha T) \prec_\beta U\\
=&\ ({\YY{\node[scale=0.8] at (0.75,-0.55) {{$a$}};}}\prec_\alpha {\YY{\node[scale=0.8] at (0.75,-0.55) {{$b$}};}})
\prec_\beta {\YY{\node[scale=0.8] at (0.75,-0.55) {{$c$}};}}\\
=&\ \left(| \vee_{a, e, e}(| \prec_\alpha{\YY{\node[scale=0.8] at (0.75,-0.55) {{$b$}};}})+| \vee_{a, e, \alpha}(| \succ_e {\YY{\node[scale=0.8] at (0.75,-0.55) {{$b$}};}})\right)\prec_\beta {\YY{\node[scale=0.8] at (0.75,-0.55) {{$c$}};}}\quad \text{(by Definition~(\ref{def:md6})~(\ref{it:md62})~(i))}\\
=&\ \left(| \vee_{a, e, \alpha}{\YY{\node[scale=0.8] at (0.75,-0.55) {{$b$}};}}\right)\prec_\beta {\YY{\node[scale=0.8] at (0.75,-0.55) {{$c$}};}}\quad \text{(by Definition~(\ref{def:md6})~(\ref{it:md61}))}\\
=&\ | \vee_{a, e, \alpha}\left({\YY{\node[scale=0.8] at (0.75,-0.55) {{$b$}};}} \prec_\beta {\YY{\node[scale=0.8] at (0.75,-0.55) {{$c$}};}}\right) +| \vee_{a, e, \beta}\left({\YY{\node[scale=0.8] at (0.75,-0.55) {{$b$}};}} \succ_\alpha {\YY{\node[scale=0.8] at (0.75,-0.55) {{$c$}};}}\right)\quad \text{(by Definition~(\ref{def:md6})~(\ref{it:md62})~(i))}\\
=&\ | \vee_{a, e, e} \left(| \prec_\alpha \Big(\yb \prec_\beta \yc\Big)\right)
+ | \vee_{a, e, \alpha} \left(| \succ_e\Big(\yb \prec_\beta \yc \Big)\right)\\
&\ +  | \vee_{a, e, e} \left(| \prec_\beta \Big(\yb \succ_\alpha \yc\Big)\right)
+ | \vee_{a, e, \beta} \left(| \succ_e\Big(\yb \succ_\alpha \yc \Big)\right)\quad \text{(by Definition~(\ref{def:md6})~(\ref{it:md61}))}\\
=&\ (| \vee_{a, e, e} |)\prec_\alpha \left(\yb \prec_\beta \yc\right)+(| \vee_{a, e, e} |)\prec_\beta \left(\yb \succ_\alpha \yc\right)\quad \text{(by Definition~(\ref{def:md6})~(\ref{it:md62})~(i))}\\
=&\ \ya \prec_\alpha \left(\yb \prec_\beta \yc\right) +\ya \prec_\beta \left(\yb \succ_\alpha \yc\right)\\
=&\ S\prec_\alpha (T \prec_\beta U)+S \prec_\beta (T\succ_\alpha U).
\end{align*}
By the same argument, we have
$$
S \baa (T \bba U)= \left(\ya \lba  \yb \right)\baa \yc +\left(\ya \baa  \yb \right) \bba \yc.
$$
We also have
\begin{align*}
&\ (S\succ_\alpha T) \prec_\beta U\\
=&\ \left(\ya \baa \yb \right) \lba \yc\\
=&\ \left(\ya \baa (| \vee_{b, e,e} |) \right)\lba \yc\\
=&\ \left(\Big(\ya \prec_e |\Big) \vee_{b, \alpha, e } | + \Big(\ya \baa |\Big) \vee_{b, e, e} | \right) \lba \yc \quad \text{(by Definition~(\ref{def:md6})~(\ref{it:md62})~(ii))}\\
=&\ \left(\ya  \vee_{b, \alpha, e } |\right) \lba \yc \quad \text{(by Definition~(\ref{def:md6})~(\ref{it:md61}))}\\
=&\ \ya \vee_{b, \alpha, e} \left(| \lba \yc\right) +\ya \vee_{b, \alpha, \beta} \left(| \succ_e \yc \right)\quad \text{(by Definition~(\ref{def:md6})~(\ref{it:md62})~(i))}\\
=&\ \ya \vee_{b, \alpha, \beta} \yc\quad \text{(by Definition~(\ref{def:md6})~(\ref{it:md61}))}\\
=&\ \left(\ya \prec_e |\right)\vee_{b, \alpha, \beta} \yc +\left(\ya \baa |\right)\vee_{b, e, \beta} \yc \quad \text{(by Definition~(\ref{def:md6})~(\ref{it:md61}))}\\
=&\ \ya \baa \left( | \vee_{b, e, \beta}\yc\right)\quad \text{(by Definition~(\ref{def:md6})~(\ref{it:md62})~(ii))}\\
=&\ \ya \baa \left( | \vee_{b, e, e} \Big(| \lba \yc \Big)+ | \vee_{b, e, \beta}\Big(| \succ_e \yc \Big)\right)\quad \text{(by Definition~(\ref{def:md6})~(\ref{it:md61}))}\\
=&\ \ya \baa  \left(\yb \lba \yc\right) \quad \text{(by Definition~(\ref{def:md6})~(\ref{it:md62})~(i))}\\
=&\ S\baa (T \lba U).
\end{align*}

For the induction step of $\dep (S)+\dep(T)+\dep(U)> 3$, we have
\begin{align*}
&\ (S \laa T) \lba U\\
=&\ \Big((S^l\vee_{a, \gamma, \delta}S^r \laa T\Big) \lba U\\
=&\ \Big( S^l \vee_{a, \gamma, \delta}(S^r \laa T)+ S^l \vee_{a, \gamma,\alpha}(S^r \bdt T)\Big) \lba U\quad \text{(by Definition~(\ref{def:md6})~(\ref{it:md62})~(i))}\\
=&\ \Big(S^l \vee_{a, \gamma, \delta}(S^r \laa T)\Big) \lba U
+\Big(S^l \vee_{a, \gamma,\alpha}(S^r \bdt T)\Big) \lba U\\
=&\ S^l \vee_{a, \gamma, \delta} \Big((S^r \laa T)\lba U\Big)+S^l \vee_{a, \gamma, \beta}\Big((S^r \laa T)\bdt U\Big)\\
&\ + S^l \vee_{a, \gamma, \alpha} \Big((S^r \bdt T)\lba U\Big)+S^l \vee_{a, \gamma, \beta}\Big((S^r \bdt T)\baa U\Big)\quad \text{(by Definition~(\ref{def:md6})~(\ref{it:md62})~(i))}\\
=&\ S^l \vee_{a, \gamma, \delta} \Big((S^r \laa T)\lba U\Big)+ S^l \vee_{a, \gamma, \alpha} \Big((S^r \bdt T)\lba U\Big)\\
&\ +S^l \vee_{a, \gamma, \beta}\Big((S^r \laa T)\bdt U+(S^r \bdt T)\baa U
\Big)\\
=&\ S^l \vee_{a, \gamma, \delta}\Big(S^r \laa (T\lba U)+S^r\lba (T \baa U)\Big)+S^l \vee_{a, \gamma, \alpha} \Big(S^r \bdt (T \lba U)\Big)\\
&\ + S^l \vee_{a, \gamma, \beta} \Big( S^r \bdt (T\baa U)\Big)\quad \text{(by the induction hypothesis and Eqs.~(\ref{eq:ddf1})-(\ref{eq:ddf3}))}\\
=&\ S^l \vee_{a, \gamma, \delta}\Big( S^r \laa (T\lba U)\Big)+S^l \vee_{a, \gamma, \alpha} \Big(S^r \bdt (T \lba U)\Big)\\
&\ + S^l \vee_{a, \gamma, \delta}\Big(S^r\lba (T \baa U)\Big)+S^l \vee_{a, \gamma, \beta} \Big( S^r \bdt (T\baa U)\Big)\\
=&\ \Big(S^l \vee_{a, \gamma, \delta} S^r\Big)\laa (T\lba U)+\Big(S^l \vee_{a, \gamma, \delta} S^r\Big) \lba (T \baa U)\quad \text{(by Definition~(\ref{def:md6})~(\ref{it:md62})~(i))}\\
=&\ S\laa (T \lba U)+S\lba (T \baa U).
\end{align*}

By the same argument,
$$ S\baa (T \bba U)= (S\lba T) \baa U+(S \baa T) \bba U.
$$

We also have
\begin{align*}
&\ (S\baa T)\lba U\\
=&\ \Big(S\baa (T^l \vee_{b, \iota, \kappa}T^r)\Big)\lba U\\
=&\   \Big((S\lia T^l)\vee_{b, \alpha, \kappa}T^r\Big)\lba U+\Big((S\baa T^l)\vee_{b, \iota, \kappa}T^r\Big)\lba U\quad \text{(by Definition~(\ref{def:md6})~(\ref{it:md62})~(ii))}\\
=&\   (S\lia T^l)\vee_{b, \alpha, \kappa}(T^r\lba U)+(S\lia T^l)\vee_{b, \alpha, \beta}(T^r\bka U)\\
&\ +(S\baa T^l) \vee_{b, \iota, \kappa}(T^r\lba U)+ (S\baa T^l) \vee_{b, \iota, \beta}(T^r\bka U)\quad \text{(by Definition~(\ref{def:md6})~(\ref{it:md62})~(i))}\\
=&\   (S\lia T^l)\vee_{b, \alpha, \kappa}(T^r\lba U)+(S\baa T^l) \vee_{b, \iota, \kappa}(T^r\lba U)\\
&\ +(S\lia T^l)\vee_{b, \alpha, \beta}(T^r\bka U)+ (S\baa T^l) \vee_{b, \iota, \beta}(T^r\bka U)\\
=&\ S\baa \Big(T^l \vee_{b, \iota, \kappa}(T^r \lba U)\Big)+S\baa \Big(T^l \vee_{b, \iota, \beta}(T^r \bka U)\Big)\quad \text{(by Definition~(\ref{def:md6})~(\ref{it:md62})~(ii))}\\
=&\ S\baa \Big((T^l \vee_{b, \iota,\kappa}T^r)\lba U\Big)\quad \text{(by Definition~(\ref{def:md6})~(\ref{it:md62})~(ii))}\\
=&\ S\baa (T \lba U).
\end{align*}

This completes the induction on the depth.
\end{proof}

In fact, the pair $(\mathrm{DD}_{\mathfrak{D}, \Omega}, \{\prec_\omega, \succ_\omega\mid \omega \in \Omega\})$ can be shown to be a free object in the category of \match dendriform algebras.

\subsection{Splitting of compatible associative algebras}\mlabel{subsec:sc}

Generalizing the notion of a matching (associative) dialgebra~\mcite{ZBG1} from two binary operations to multiple binary operations, we define

\begin{defn}
A {\bf \match associative algebra} is a $\bfk$-module $A$ equipped with
a family of binary operations $\{\bullet_{\omega}  \,\mid \omega \in \Omega\}$
 such that
 \begin{align*}
 (x\bullet_\alpha y)\bullet_\beta z =x\bullet_\alpha (y\bullet_\beta z) \tforall x,y,z \in A, \alpha, \beta\in \Omega.
 \end{align*}
 A \match associative algebra is called {\bf totally compatible} if it satisfies
   \begin{align*}
 (x\bullet_\alpha y)\bullet_\beta z =x\bullet_\beta (y\bullet_\alpha z) \tforall x,y ,z\in A, \alpha, \beta\in \Omega.
 \end{align*}
 In particular, for each $\omega\in \Omega$, the operation $\bullet_\omega$ is associative.
\end{defn}

More generally, we propose
\begin{defn}
A {\bf  compatible (multiple) associative algebra} is a $\bfk$-module $A$ equipped with
a family of binary operations $\{\bullet_{\omega}  \,\mid \omega \in \Omega\}$
 such that
 \begin{align}
 (x\bullet_\alpha y)\bullet_\beta z+(x\bullet_\beta y)\bullet_\alpha z=x\bullet_\alpha (y\bullet_\beta z)+x\bullet_\beta (y\bullet_\alpha z)\tforall x,y,z \in A, \alpha, \beta\in \Omega. \mlabel{eq:wm1}
 \end{align}
\end{defn}
As a consequence, for each $\omega\in \Omega$, the operation $\bullet_\omega$ is associative.

\begin{remark}
\begin{enumerate}
\item Every \match associative algebra is a compatible  multiple associative algebra unless the characteristic of $A$ is 2.

\item The motivation of Eq.~(\mref{eq:wm1}) comes from the compatibility of two associative algebras  studied in \mcite{OS06, OS062, ZBG2}. More precisely, given two associative algebras $(A, \bullet_\alpha)$ and $(A, \bullet_\beta)$, they are called {\bf compatible} if, for any $a_\alpha, b_\beta\in \bfk $, the product
    \begin{align*}
    x \ast y =a_\alpha x \bullet_\alpha y+ b_\beta x \bullet_\beta y
    \end{align*}
    defines an associative algebra. Direct calculation shows that this condition is equivalent to Eq.~(\mref{eq:wm1}) by taking $a_\alpha= b_\beta=1$.
\end{enumerate}
\end{remark}

Generalizing the fact that a dendriform algebra induces an associative algebra, we have

\begin{theorem}[{\bf Splitting the  compatible multiple associativity}]
\begin{enumerate}
\item Let $(R, \, \{\prec_{{\omega}},\succ_{{\omega}}  \, \mid \omega \in \Omega\})$ be a \match dendriform algebra. Define a set of binary operations
\begin{align*}
\bullet_\omega: R\ot R \rightarrow R,\quad  x \bullet_\omega y:= x\succ_\omega y+x\prec_\omega y\, \text{ for } x,y,z\in R, \omega\in \Omega.
\end{align*}
Then the pair $(R, \{\bullet_\omega\mid \omega\in \Omega\})$ is a compatible associative algebra.
\mlabel{it:addid0}
\item Let $(R, \, \{\prec_{{\omega}},\succ_{{\omega}},\cdot_\omega  \, \mid \omega \in \Omega\}, \cdot)$ be a \match tridendriform algebra. Define a set of binary operations
\begin{align*}
\bullet_\omega: R\ot R \rightarrow R,\quad  x \bullet_\omega y:= x\succ_\omega y+x\prec_\omega y+x\cdot_\omega y\, \text{ for } x,y,z\in R, \omega\in \Omega.
\end{align*}
Then the pair $(R, \{\bullet_\omega\mid \omega\in \Omega\})$ is a compatible associative algebra.
\mlabel{it:addidd0}
\end{enumerate}\mlabel{them:spl0}
\end{theorem}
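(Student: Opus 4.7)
The plan is a direct calculation: expand each side of Eq.~(\mref{eq:wm1}) by bilinearity of $\bullet_\omega$ and match terms using the matching (tri)dendriform axioms, each applied in both the $(\alpha,\beta)$ and the $(\beta,\alpha)$ orderings. This symmetrize-and-collect pattern is the only ingredient needed, and as a byproduct the specialization $\alpha=\beta=\omega$ recovers the associativity of each individual $(R,\bullet_\omega)$.

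For Item~(\mref{it:addid0}), the left-hand side of the target identity expands into eight summands, indexed by the four patterns $(\succ\succ)$, $(\succ\prec)$, $(\prec\succ)$, $(\prec\prec)$, each appearing in the $(\alpha,\beta)$ and the $(\beta,\alpha)$ order. I would then group as follows. Applying~(\mref{eq:ddf2}) twice disposes of the $(\succ\prec)$ pair, producing $x\succ_\alpha(y\prec_\beta z)+x\succ_\beta(y\prec_\alpha z)$. Adding the two $\alpha\leftrightarrow\beta$ forms of~(\mref{eq:ddf3}) collapses the combined $(\succ\succ)+(\prec\succ)$ contributions to $x\succ_\alpha(y\succ_\beta z)+x\succ_\beta(y\succ_\alpha z)$. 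Finally, adding the two $\alpha\leftrightarrow\beta$ forms of~(\mref{eq:ddf1}) rewrites the $(\prec\prec)$ pair as the four terms $x\prec_\alpha(y\prec_\beta z)+x\prec_\beta(y\succ_\alpha z)+x\prec_\beta(y\prec_\alpha z)+x\prec_\alpha(y\succ_\beta z)$. Summing the three groups yields exactly the eight-term expansion of $x\bullet_\alpha(y\bullet_\beta z)+x\bullet_\beta(y\bullet_\alpha z)$.

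For Item~(\mref{it:addidd0}), the same strategy applies, now with eighteen summands on each side indexed by the nine patterns $(\varepsilon,\pi)$ with $\varepsilon,\pi\in\{\prec,\succ,\cdot\}$ in both orderings. Each of the seven tridendriform axioms (\mref{eq:tdf1})--(\mref{eq:tdf7}), once symmetrized over $\alpha\leftrightarrow\beta$, accounts for a specific subset of the LHS patterns: (\mref{eq:tdf1}) handles $(\prec\prec)$; (\mref{eq:tdf2}) and (\mref{eq:tdf6}) handle the $\prec$-outer patterns $(\succ\prec)$ and $(\cdot\prec)$; (\mref{eq:tdf3}) jointly handles $(\succ\succ)$, $(\prec\succ)$ and $(\cdot\succ)$; (\mref{eq:tdf4}) and (\mref{eq:tdf5}) handle $(\succ\cdot)$ and $(\prec\cdot)$; and (\mref{eq:tdf7}) handles $(\cdot\cdot)$. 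The main obstacle is purely bookkeeping: one must verify that every one of the eighteen RHS summands is produced exactly once, and that each axiom is invoked in both symmetric forms so the resulting identity is genuinely symmetric in $\alpha$ and $\beta$. No axiom of the tridendriform structure is left unused, which in hindsight is the reason the construction lands precisely in the compatible (rather than only matching) multiple associative algebra setting.
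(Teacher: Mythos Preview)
Your proposal is correct and follows essentially the same direct-computation approach as the paper. The only organizational difference is that the paper first computes the single mixed associator $(x\bullet_\alpha y)\bullet_\beta z - x\bullet_\alpha(y\bullet_\beta z)$, simplifies it via Eqs.~(\mref{eq:tdf1})--(\mref{eq:tdf7}), and then observes that the resulting expression is antisymmetric under $\alpha\leftrightarrow\beta$, whereas you symmetrize from the outset and match the eighteen terms in pairs; both arguments use exactly the same axioms in the same way.
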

We note that a \match dendriform algebra does not give a \match associative algebra as one might expect.

\begin{proof}
We will only prove Items (\mref{it:addidd0}). The proof of Item (\mref{it:addid0}) is similar. By the definition, for $x,y,z\in R, \alpha, \beta \in \Omega$, we have
\begin{align*}
x\bullet_\alpha (y\bullet_\beta z)=&\ x\bullet_\alpha (y\succ_\beta z+y\prec_\beta z+y\cdot_\beta z)\\
=&\ x\bullet_\alpha (y\succ_\beta z)+x\bullet_\alpha(y\prec_\beta z)+x\bullet_\alpha (y\cdot_\beta z)\\
=&\ x\succ_\alpha (y\succ_\beta z)+x\prec_\alpha (y\succ_\beta z)+x\cdot_\alpha (y\succ_\beta z)\\
&\ +x\succ_\alpha (y\prec_\beta z)+x\prec_\alpha (y\prec_\beta z)+ x\cdot_\alpha (y\prec_\beta z)\\
&\ +x\succ_\alpha (y\cdot_\beta z)+x\prec_\alpha (y\cdot_\beta z)+ x\cdot_\alpha (y\cdot_\beta z)\\
=&\ (x\prec_{{\beta}}y)\succ_{\alpha}z+(x\succ_{{\alpha}}y)\succ_{\beta}z+(x\cdot_\beta y)\succ_{\alpha}z+x\prec_\alpha (y\succ_\beta z)\\
&\ +x\cdot_\alpha (y\succ_\beta z)
+x\succ_\alpha (y\prec_\beta z)+x\prec_\alpha (y\prec_\beta z)+ x\cdot_\alpha (y\prec_\beta z)\\
&\ +x\succ_\alpha (y\cdot_\beta z)+x\prec_\alpha (y\cdot_\beta z)+ x\cdot_\alpha (y\cdot_\beta z)\quad \text{(by Eq.~(\mref{eq:tdf3}))}.
\end{align*}
Also,
\begin{align*}
(x\bullet_\alpha y)\bullet_\beta z=&\ (x\succ_\alpha y+x\prec_\alpha y+x\cdot_\alpha y)\bullet_\beta z\\
=&\ (x\succ_\alpha y)\bullet_\beta z+(x\prec_\alpha y)\bullet_\beta z+(x\cdot_\alpha y)\bullet_\beta z \\
=&\ (x\succ_\alpha y)\succ_\beta z+(x\succ_\alpha y)\prec_\beta z+(x\succ_\alpha y) \cdot_\beta z\\
&\ +(x\prec_\alpha y)\succ_\beta z+(x\prec_\alpha y)\prec_\beta z+(x\prec_\alpha y) \cdot_\beta z\\
&\ +(x\cdot_\alpha y)\succ_\beta z+(x\cdot_\alpha y)\prec_\beta z+(x\cdot_\alpha y) \cdot_\beta z\\
=&\ (x\succ_\alpha y)\succ_\beta z+(x\succ_\alpha y)\prec_\beta z+(x\succ_\alpha y) \cdot_\beta z
+(x\prec_\alpha y)\succ_\beta z\\
&\ +x\prec_{\alpha}(y\prec_{{\beta}}z) +x\prec_{\beta}(y\succ_{{\alpha}}z)+x\prec_{\alpha}(y\cdot_\beta z)
+(x\prec_\alpha y) \cdot_\beta z\\
&\ +(x\cdot_\alpha y)\succ_\beta z+(x\cdot_\alpha y)\prec_\beta z
+(x\cdot_\alpha y) \cdot_\beta z\quad \text{(by Eq.~(\mref{eq:tdf1}))}\\
=&\ (x\succ_\alpha y)\succ_\beta z
+x\succ_\alpha( y\prec_\beta z)
+x\succ_\alpha (y \cdot_\beta z)
+(x\prec_\alpha y)\succ_\beta z\\
&\ +x\prec_{\alpha}(y\prec_{\beta}z)
+x\prec_{\beta}(y\succ_{\alpha}z)
+x\prec_{\alpha}(y\cdot_\beta z)
+x\cdot_\beta (y\succ_\alpha  z)\\
&\ +(x\cdot_\alpha y)\succ_\beta z
+x\cdot_\alpha (y\prec_\beta z)
+x\cdot_\alpha (y \cdot_\beta z)\quad \text{(by Eqs.~(\ref{eq:tdf2}), (\ref{eq:tdf4})-(\ref{eq:tdf7}))}.
\end{align*}
Then
\begin{align*}
(x\bullet_\alpha y)\bullet_\beta z-x\bullet_\alpha (y\bullet_\beta z)
=&\ (x\prec_\alpha y)\succ_\beta z+x\prec_{\beta}(y\succ_{\alpha}z)
+x\cdot_\beta (y\succ_\alpha  z)
+(x\cdot_\alpha y)\succ_\beta z\\
&\ -x\prec_\alpha (y \succ_\beta z)-(x\prec_\beta y) \succ_\alpha z-(x\cdot_\beta y)\succ_\alpha z-x\cdot_\alpha (y\succ_\beta z).
\end{align*}
Similarly, we have
\begin{align*}
(x\bullet_\beta y)\bullet_\alpha z-x\bullet_\beta (y\bullet_\alpha z)=&\ (x\prec_\beta y)\succ_\alpha z+x\prec_{\alpha}(y\succ_{\beta}z)+x\cdot_\alpha (y\succ_\beta  z)
+(x\cdot_\beta y)\succ_\alpha z\\
&\ -x\prec_\beta (y \succ_\alpha z)-(x\prec_\alpha y) \succ_\beta z-(x\cdot_\alpha y)\succ_\beta z-x\cdot_\beta (y\succ_\alpha z).
\end{align*}
Thus
\begin{align*}
(x\bullet_\alpha y)\bullet_\beta z-x\bullet_\alpha (y\bullet_\beta z)+(x\bullet_\beta y)\bullet_\alpha z-x\bullet_\beta (y\bullet_\alpha z)=0,
\end{align*}
that is,
\begin{align*}
(x\bullet_\alpha y)\bullet_\beta z+(x\bullet_\beta y)\bullet_\alpha z=x\bullet_\alpha (y\bullet_\beta z)+x\bullet_\beta (y\bullet_\alpha z).
\end{align*}
This completes the proof.
\end{proof}

\section{Matching Rota-Baxter Lie algebras, \match pre-Lie algebras and \match PostLie algebras}
\mlabel{sec:prelie}

In this section, we study the \match type structure in the Lie algebra context. We first derive \match pre-Lie algebras introduced by Foissy (called multiple pre-Lie algebras)~\mcite{Foi18} from \match dendriform algebras and \match Rota-Baxter Lie algebras of weight zero. We further introduce a notion of \match PostLie algebras from its relationship with \match Rota-Baxter Lie algebra with weight and study the splitting property of \match pre-Lie algebras, matching PostLie algebras and \match Rota-Baxter Lie algebras.

\subsection{\Match pre-Lie algebras}
\mlabel{ss:mpl}

We first recall the concept of a \match pre-Lie algebra and its construction by typed rooted trees which was used by Bruned, Hairer and Zambotti~\mcite{BHZ,Foi18}  to give a description of a renormalisation procedure of stochastic PDEs.
We then establish the relationship between \match dendriform algebras and \match pre-Lie algebras. As a consequence, we derive a \match pre-Lie algebra from a \mrba of weight zero.

\begin{defn}~\mcite{Foi18}
Let $\Omega$ be a nonempty set. A {\bf \match (left) pre-Lie algebra}, called {\bf multiple pre-Lie} in~\cite{Foi18}, is a pair $(A, \{\ast_\omega\mid \omega\in \Omega\})$ consisting of a $\bfk$-module $A$ and a set of binary operations $\ast_\omega: A\ot A \rightarrow A$, $\omega\in \Omega$, that satisfy
\begin{align}
x\ast_\alpha (y\ast_\beta z)-(x\ast_\alpha y)\ast_\beta z=y\ast_\beta (x\ast_\alpha z)-(y\ast_\beta x)\ast_\alpha z\, \text{ for all }\, x,y,z\in A, \alpha, \beta \in \Omega.
\mlabel{eq:mpreid}
\end{align}
\end{defn}

\begin{remark}\label{remk:pre}
\begin{enumerate}
\item When $\Omega$ is a singleton set, the \match pre-Lie algebra is a pre-Lie algebra.
    In fact, for any $\omega\in \Omega$, $(A, \ast_\omega)$  is a pre-Lie algebra.
More generally, as in the case of \match dendriform algebras (Proposition~\mref{prop:mmdd}), linear combinations of operations in a \match pre-Lie algebras naturally give pre-Lie algebras. More precisely,
let $(A, \{\ast_\omega\mid \omega\in \Omega\})$ be a \match pre-Lie algebra. For any linear combination
\begin{align*}
\ast:=\sum_{\omega\in \Omega} c_\omega \ast_\omega,\,  c_\omega \in \bfk,
\end{align*}
with finite support,  $(A, \ast)$ is a pre-Lie algebra~\cite{Foi18}. \mlabel{reit:3}

\item As in the case of a (left) pre-Lie algebra which is also called a left symmetric algebra, the defining equation~(\ref{eq:mpreid}) does not change when the pair $(x,\alpha)$ is exchanged with the pair $(y,\beta)$.
\end{enumerate}
\end{remark}
We now briefly recall the example of \match pre-Lie algebras from~\mcite{BHZ,Foi18}.
\begin{exam}
Let $\mathfrak{D}$ and $\mathfrak{T}$ be nonempty sets. Recall~\mcite{BHZ, Foi18} that a {\bf $\mathfrak{D}$-decorated $\mathfrak{T}$-typed rooted tree} is a triple $(T, d, t)$ consisting of a rooted tree $T$, a (vertex) decorated map $d: V(T)\rightarrow \mathfrak{D}$, where $V(T)$ is the set of vertices of $T$, and an edge decoration map, called typed map $t: E(T)\rightarrow \mathfrak{T}$, where $E(T)$ is the set of edges of $T$.
When $\frakT$ is the couple $\{\text{solid red}, \text{dotted green}\}$, the first few typed rooted trees are listed below:
$$\tdunc{$c$},\,  \tddeuxc{$a$}{$b$}{red}, \, \tddeuxc{$a$}{$b$}{green, densely dotted}\, ,
\tdtroisunc{$a$}{$c$}{$b$}{red}{red},\,
\tdtroisunc{$a$}{$c$}{$b$}{green, densely dotted}{green, densely dotted},\,
\tdtroisunc{$a$}{$c$}{$b$}{red}{green,densely dotted},\,
\tdtroisdeuxc{$a$}{$b$}{$c$}{red}{red},\,
\tdtroisdeuxc{$a$}{$b$}{$c$}{red}{green, densely dotted},\,
\tdtroisdeuxc{$a$}{$b$}{$c$}{green, densely dotted}{red},\,
\tdtroisdeuxc{$a$}{$b$}{$c$}{green, densely dotted}{green, densely dotted},$$
where $a, b, c \in \mathfrak{D}$  and $\mathfrak{T}$ contains two elements, denoted by $\textcolor{red}{|}$ and  $\textcolor{green}{ \vdots}$, respectively.

For $n\geq 0$, denoted by $\mathbb{T}_{\mathfrak{D}, \mathfrak{T}}(n)$ the set of isoclasses of $\mathfrak{D}$-decorated $\mathfrak{T}$-typed trees $T$ such that $|V(T)|=n$. Define the disjoint union
\begin{align*}
\mathbb{T}_{\mathfrak{D}, \mathfrak{T}}:=\bigsqcup_{n\geq 0}\mathbb{T}_{\mathfrak{D}, \mathfrak{T}}(n).
\end{align*}
Denote by $\mathfrak{g}_{\mathfrak{D}, \mathfrak{T}}$ the space of $\mathfrak{T}$-typed trees whose vertices are decorated by elements of $\mathfrak{D}$. Motivated by the classical pre-Lie product on rooted tree, Foissy~\mcite{Foi18}  equipped $\mathfrak{g}_{\mathfrak{D}, \mathfrak{T}}$ with a \match pre-Lie algebraic structure, where the pre-Lie product $\ast_t$, $t\in \mathfrak{T}$ is defined by
\begin{align*}
T \ast_t T':= \sum_{v\in V(T)} T \ast_t^{(v)} T' \, \text{ for all }\, T, T' \in \mathbb{T}_{\mathfrak{D}, \mathfrak{T}}.
\end{align*}
Here $T \ast_t^{(v)} T'$ is denoted by the $\mathfrak{D}$-decorated $\mathfrak{T}$-typed tree obtained by grafting $T'$ on $v$. For example,
given two types of edges $\textcolor{red}{|}$ (solid red) and $\textcolor{green}{\vdots}$ (dotted green),  for $a$, $b$, $c\in \mathfrak{D}$, we have
\begin{align*}
\tddeuxc{$a$}{$b$}{red}\ast_{\textcolor{red}{|}} \tdunc{$c$}&=
\tdtroisunc{$a$}{$c$}{$b$}{red}{red}+\tdtroisdeuxc{$a$}{$b$}{$c$}{red}{red},&
\tddeuxc{$a$}{$b$}{red}\ast_{\textcolor{green}{\cdot}} \tdunc{$c$}&=
\tdtroisunc{$a$}{$c$}{$b$}{red}{green,dotted}+\tdtroisdeuxc{$a$}{$b$}{$c$}{red}{green, ,dotted}.
\end{align*}
\end{exam}

The following result captures the relationship between \match dendriform algebras and \match pre-Lie algebras, generalizing the well-known relationship between dendriform algebras and pre-Lie algebras~\mcite{Agu00}.

\begin{theorem}
Let $(R, \, \{\prec_{{\omega}},\succ_{{\omega}}  \, \mid \omega \in \Omega\})$ be a \match dendriform algebra. Define a set of binary operations
\begin{align}
\ast_\omega: R\ot R \rightarrow R,\quad  x \ast_\omega y:= x\succ_\omega y-y\prec_\omega x\, \text{ for } x,y\in R, \omega\in \Omega.
\mlabel{eq:mpre0}
\end{align}
Then the pair $(R, \{\ast_\omega\mid \omega\in \Omega\})$ is a \match pre-Lie algebra.
\mlabel{thm:denpre}
\end{theorem}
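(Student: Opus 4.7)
The strategy is to verify the matching pre-Lie identity~(\mref{eq:mpreid}) by direct expansion, using the three matching dendriform axioms~(\mref{eq:ddf1})--(\mref{eq:ddf3}). Define the associator
\[
A(x,y,z;\alpha,\beta) := x\ast_\alpha(y\ast_\beta z) - (x\ast_\alpha y)\ast_\beta z,
\]
so that the desired identity reads $A(x,y,z;\alpha,\beta) = A(y,x,z;\beta,\alpha)$. Unfolding the definition $a\ast_\omega b = a\succ_\omega b - b\prec_\omega a$ twice, each side expands into eight terms built from the operations $\prec_\alpha,\succ_\alpha,\prec_\beta,\succ_\beta$ applied to the triple $(x,y,z)$ in various arrangements.

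The next step is to reduce $A(x,y,z;\alpha,\beta)$ using the three matching dendriform identities. I would apply~(\mref{eq:ddf3}) to rewrite $x\succ_\alpha(y\succ_\beta z) - (x\succ_\alpha y)\succ_\beta z$ as $(x\prec_\beta y)\succ_\alpha z$; apply~(\mref{eq:ddf2}) in one direction to replace $(y\succ_\beta z)\prec_\alpha x$ by $y\succ_\beta(z\prec_\alpha x)$, and in the other direction to replace $x\succ_\alpha(z\prec_\beta y)$ by $(x\succ_\alpha z)\prec_\beta y$; and apply~(\mref{eq:ddf1}) with the relabelling $x\mapsto z$, $z\mapsto x$ and $\alpha\leftrightarrow\beta$ to turn $(z\prec_\beta y)\prec_\alpha x - z\prec_\beta(y\prec_\alpha x)$ into $z\prec_\alpha(y\succ_\beta x)$. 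After these substitutions, $A(x,y,z;\alpha,\beta)$ collapses to a sum of six ``mixed'' terms of the forms $(x\prec_\beta y)\succ_\alpha z$, $(y\prec_\alpha x)\succ_\beta z$, $(x\succ_\alpha z)\prec_\beta y$, $y\succ_\beta(z\prec_\alpha x)$, $z\prec_\alpha(y\succ_\beta x)$, $z\prec_\beta(x\succ_\alpha y)$, each with a definite sign.

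Finally, I would compute $A(y,x,z;\beta,\alpha)$ by the simultaneous swaps $x\leftrightarrow y$, $\alpha\leftrightarrow\beta$ in the reduced expression for $A(x,y,z;\alpha,\beta)$, and observe that the resulting six-term sum coincides termwise (and signwise) with the original. This gives the required symmetry and completes the proof.

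The only obstacle is bookkeeping: there are sixteen terms in the raw expansion and three identities applied in a coordinated way, so care is needed to keep track of the signs and of which identity is being applied where. Conceptually nothing new beyond the classical Aguiar argument~\mcite{Agu00} is required---the matching indices $\alpha,\beta$ thread through the computation in precisely the pattern already dictated by~(\mref{eq:ddf1})--(\mref{eq:ddf3}), so the single-operation proof lifts verbatim once the bookkeeping is set up. Note that the weight-zero instance of Theorem~\mref{thm:dend}(b) combined with this result immediately yields a matching pre-Lie algebra from any matching Rota-Baxter Lie algebra of weight zero, as promised.
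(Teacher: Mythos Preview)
Your approach is correct and essentially identical to the paper's: expand the associator into eight terms, reduce with~(\mref{eq:ddf1})--(\mref{eq:ddf3}) to six, and observe the $(x,\alpha)\leftrightarrow(y,\beta)$ symmetry. One small bookkeeping remark: applying~(\mref{eq:ddf2}) in \emph{opposite} directions to $-(y\succ_\beta z)\prec_\alpha x$ and $-x\succ_\alpha(z\prec_\beta y)$ leaves the pair $-(x\succ_\alpha z)\prec_\beta y$, $-y\succ_\beta(z\prec_\alpha x)$, which is not literally termwise stable under the swap (you need one more use of~(\mref{eq:ddf2}) to close the loop); the paper instead applies~(\mref{eq:ddf2}) to only the first of the two, so that both middle terms land in the form $\cdot\succ_{\bullet}(z\prec_{\bullet}\cdot)$ and the six-term expression is manifestly invariant.
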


\begin{proof}
For $x, y,z \in R$, $\alpha, \beta \in \Omega$, we have
\begin{align*}
&\ x\ast_\alpha (y\ast_\beta z)-(x\ast_\alpha y)\ast_\beta z\\
=&\ x\succ_\alpha(y\succ_\beta z)-(y\succ_\beta z)\prec_\alpha x
-x\succ_\alpha(z\prec_\beta y)+(z\prec_\beta y)\prec_\alpha x\\
&\ -(x\succ_\alpha y) \succ_\beta z+z\prec_\beta (x\succ_\alpha y)
+(y\prec_\alpha x)\succ_\beta z-z\prec_\beta(y\prec_\alpha x)\quad \text{(by Eq.~(\ref{eq:mpre0}))}\\
=&\ (x\prec_\beta y)\succ_\alpha z-(y\succ_\beta z)\prec_\alpha x
-x\succ_\alpha(z\prec_\beta y)+z\prec_\alpha(y\succ_\beta x)\\
&\ +z\prec_\beta (x\succ_\alpha y)+(y\prec_\alpha x)\succ_\beta z\quad \text{(by Eq.~(\ref{eq:ddf1}) and Eq.~(\ref{eq:ddf3}))}\\
=&\ (x\prec_\beta y)\succ_\alpha z+(y\prec_\alpha x)\succ_\beta z-(y\succ_\beta z)\prec_\alpha x-x\succ_\alpha(z\prec_\beta y)\\
&\ +z\prec_\alpha(y\succ_\beta x)+z\prec_\beta (x\succ_\alpha y)\\
=&\ (x\prec_\beta y)\succ_\alpha z+(y\prec_\alpha x)\succ_\beta z-y\succ_\beta (z\prec_\alpha x)-x\succ_\alpha(z\prec_\beta y)\\
&\ +z\prec_\alpha(y\succ_\beta x)+z\prec_\beta (x\succ_\alpha y)\quad \text{(by Eq.~(\ref{eq:ddf2}))}.
\end{align*}
Observe that the last expression is invariant under the exchange of $(x,\alpha)$ and $(y,\beta)$. Hence
\begin{align*}
x\ast_\alpha (y\ast_\beta z)-(x\ast_\alpha y)\ast_\beta z=y\ast_\beta (x\ast_\alpha z)-(y\ast_\beta x)\ast_\alpha z.
\end{align*}
This completes the proof.
\end{proof}

As a consequence of Theorem~\mref{thm:denpre}, we have the following result, specializing to those in~\mcite{AB08,GS00} when $\Omega$ is a singleton.

\begin{coro}
\begin{enumerate}
\item \label{coro:rbmpre}
A \match \rba $(R, \{P_\omega\mid \omega\in \Omega\})$ of weight zero  defines  a \match pre-Lie algebra $(R, \{\ast_\omega\mid \omega\in \Omega\})$, where
\begin{align*}
x \ast_\omega y:= P_{\omega}(x)y-yP_{\omega}(x)\, \text{ for } x,y\in R, \omega\in \Omega.
\end{align*}

\item \label{coro:rbmpre2}
The \mrba $(R, \{P_\omega\mid \omega\in \Omega\})$ of weight $\lambda$ defines a \match pre-Lie algebra $(R, \{\ast_\omega\mid \omega\in \Omega\})$, where
\begin{align*}
x \ast_\omega y:= P_{\omega}(x)y-yP_{\omega}(x)-\lambda yx\, \text{ for } x,y\in R, \omega\in \Omega.
\end{align*}

\end{enumerate}
\end{coro}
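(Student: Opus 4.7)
The plan is to obtain this corollary as an immediate two-step composition, since all the hard work has already been done in Theorem~\mref{thm:dend} and Theorem~\mref{thm:denpre}.

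For Item~(\mref{coro:rbmpre2}), I would start from the matching Rota-Baxter algebra $(R,\{P_\omega\mid \omega\in\Omega\})$ of weight $\lambda$ (interpreted as the constant family $\lambda_\Omega=\{\lambda\}$) and first apply Theorem~\mref{thm:dend}(\mref{it:RBTD2}) to endow $R$ with the matching dendriform structure
\[
x\prec_\omega y = xP_\omega(y)+\lambda xy,\qquad x\succ_\omega y = P_\omega(x)y,\quad \omega\in\Omega.
\]
Then I would apply Theorem~\mref{thm:denpre} to the resulting matching dendriform algebra, which yields the matching pre-Lie product
\[
x\ast_\omega y = x\succ_\omega y - y\prec_\omega x = P_\omega(x)y - \bigl(yP_\omega(x)+\lambda yx\bigr) = P_\omega(x)y - yP_\omega(x)-\lambda yx,
\]
which is precisely the formula in the statement. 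Since both theorems have already been proved, no further verification is required.

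For Item~(\mref{coro:rbmpre}), I would simply specialize the above argument to $\lambda=0$, using Theorem~\mref{thm:dend}(\mref{it:RBTD1}) in place of (\mref{it:RBTD2}); this drops the $\lambda yx$ term and recovers $x\ast_\omega y = P_\omega(x)y - yP_\omega(x)$. Alternatively, one could just observe that Item~(\mref{coro:rbmpre}) is the $\lambda=0$ case of Item~(\mref{coro:rbmpre2}).

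There is no genuine obstacle here: the corollary is essentially a bookkeeping statement that chains together the two earlier theorems, and the only thing to be careful about is matching the sign and weight conventions in Eq.~(\mref{eq:mpre0}) with those coming out of Theorem~\mref{thm:dend}. If desired, one could also give a direct verification of Eq.~(\mref{eq:mpreid}) by expanding $x\ast_\alpha(y\ast_\beta z)-(x\ast_\alpha y)\ast_\beta z$ and applying the matching Rota-Baxter identity~(\mref{eq:RBid}) to collapse the resulting eight terms into a symmetric expression in $(x,\alpha)\leftrightarrow(y,\beta)$, but this would merely recapitulate the combined proofs of Theorems~\mref{thm:dend} and~\mref{thm:denpre}.
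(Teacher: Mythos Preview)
Your proposal is correct and matches the paper's own proof essentially verbatim: the paper also derives Item~(\ref{coro:rbmpre}) from Theorem~\ref{thm:dend}(\ref{it:RBTD1}) together with Theorem~\ref{thm:denpre}, and Item~(\ref{coro:rbmpre2}) from Theorem~\ref{thm:dend}(\ref{it:RBTD2}) together with Theorem~\ref{thm:denpre}. The only cosmetic difference is that you handle Item~(\ref{coro:rbmpre2}) first and then specialize, whereas the paper states the two deductions in the original order.
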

\begin{proof}
(\ref{coro:rbmpre}). It follows from Theorem~\mref{thm:dend} (\ref{it:RBTD1}) and Theorem~\mref{thm:denpre}.

(\ref{coro:rbmpre2}). It follows from Theorem~\mref{thm:dend} (\ref{it:RBTD2}) and Theorem~\mref{thm:denpre}.
\end{proof}

Note that the \match Rota-Baxter equation of weight zero in Eq.~(\mref{eq:RBid}) is Lie compatible in the sense that anti-symmetrizing the \match Rota-Baxter algebra relation on an associative algebra gives the same relation on a Lie algebra.
Motivated by this connection,  we propose the concept of the \match Rota-Baxter Lie algebra. The situation is quite different for PostLie algebras. See Section~\mref{ss:mpol}.

\begin{defn}
Let $\Omega$ be a nonempty set and let $\lambda_\Omega:=\{\lambda_\omega\,|\,\omega\in \Omega\}\subseteq \bfk$. A {\bf \match Rota-Baxter Lie algebra} of weight $\lambda_\Omega$ is a  triple $(\mathfrak{g}, [,], \{P_\omega\mid \omega\in \Omega\})$ consisting of a Lie algebra $(\mathfrak{g}, [,])$ and a set of linear operators $P_\omega: \mathfrak{g} \rightarrow \mathfrak{g}$, $\omega\in \Omega$, that satisfy the {\bf \match Rota-Baxter equation of weight $\lambda_\Omega$}
\begin{align}
[P_{\alpha}(x), P_{\beta}(y)]
=P_{\alpha}([x, P_{\beta}(y)])+P_{\beta}([P_{\alpha}(x), y])+\lambda_\beta P_\alpha([x,y])\, \text{ for all }\, x, y\in \mathfrak{g}, \alpha, \beta\in \Omega.
\label{eq:mlieid}
\end{align}
\end{defn}

The Rota-Baxter equation on a Lie algebra is the operator form of the classical Yang-Baxter equation~\mcite{Bai, GS00,STS}. Similarly, there should be a close relationship between the \match Rota-Baxter equation in~(\mref{eq:mlieid}) with weight zero and the polarized classical Yang-Baxter equation, as a Lie algebra variation of the \paybe. This will be pursued in a separate paper.

Generalizing the connection establishing by Golubschik and Sokolov~\mcite{GS00} from Rota-Baxter Lie algebras of weight zero to pre-Lie algebras, we obtain

\begin{theorem}
Let $(\mathfrak{g}, [,], \{P_\omega\mid \omega\in \Omega\})$ be a \match Rota-Baxter Lie algebra of weight zero $($that is, $\lambda_\omega=0$ for all $\omega\in \Omega$$)$.
Define a set of binary operations
\begin{align}
\ast_\omega: \mathfrak{g} \ot \mathfrak{g}  \rightarrow \mathfrak{g},\quad  x \ast_\omega y:= [P_\omega(x), y] \, \text{ for }\, x,y\in \mathfrak{g}, \omega\in \Omega.
\mlabel{eq:mpre1}
\end{align}
Then the pair $(\mathfrak{g}, \{\ast_\omega\mid \omega\in \Omega\})$ is a \match pre-Lie algebra.
\mlabel{thm:rbmpre1}
\end{theorem}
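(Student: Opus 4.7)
The plan is to reduce the matching pre-Lie identity
\[
x\ast_\alpha (y\ast_\beta z)-(x\ast_\alpha y)\ast_\beta z
= y\ast_\beta (x\ast_\alpha z)-(y\ast_\beta x)\ast_\alpha z
\]
to a direct consequence of the Jacobi identity and the weight-zero matching Rota-Baxter equation~(\ref{eq:mlieid}). I would start by expanding everything in terms of the bracket via $u\ast_\omega v=[P_\omega(u),v]$. In particular,
\[
(x\ast_\alpha y)\ast_\beta z = [\,P_\beta([P_\alpha(x),y]),\,z\,],
\]
and the matching Rota-Baxter equation of weight zero rewrites the inner term as
\[
P_\beta([P_\alpha(x),y]) = [P_\alpha(x),P_\beta(y)] - P_\alpha([x,P_\beta(y)]).
\]
So the left-hand side of the pre-Lie identity becomes
\[
[P_\alpha(x),[P_\beta(y),z]] - [[P_\alpha(x),P_\beta(y)],z] + [P_\alpha([x,P_\beta(y)]),z].
\]

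Next I would kill the first two terms with Jacobi: $[P_\alpha(x),[P_\beta(y),z]] - [[P_\alpha(x),P_\beta(y)],z] = [P_\beta(y),[P_\alpha(x),z]] = y\ast_\beta (x\ast_\alpha z)$. For the leftover term, antisymmetry of the bracket gives $[x,P_\beta(y)]=-[P_\beta(y),x]$, so
\[
[P_\alpha([x,P_\beta(y)]),z] = -[P_\alpha([P_\beta(y),x]),z] = -(y\ast_\beta x)\ast_\alpha z.
\]
Putting the two pieces together yields exactly the right-hand side of the matching pre-Lie identity, finishing the proof.

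There is no real obstacle here: once the substitution $u\ast_\omega v=[P_\omega(u),v]$ is made, the argument is a one-line shuffle of Jacobi plus a single application of the weight-zero matching Rota-Baxter relation. The only thing to be careful about is the bookkeeping of which index $\alpha$ or $\beta$ sits outside the $P$ operator in the matching Rota-Baxter equation, but this is forced by the placement in $(x\ast_\alpha y)\ast_\beta z$, and the symmetric expression for $(y\ast_\beta x)\ast_\alpha z$ reappears automatically through the antisymmetry step. As with the classical Golubschik--Sokolov construction recovered when $|\Omega|=1$, the argument works verbatim; the matching weight-zero axiom~(\ref{eq:mlieid}) is precisely the cross-index generalization needed so that the two Rota-Baxter operators $P_\alpha$ and $P_\beta$ appearing in the pre-Lie identity interact correctly.
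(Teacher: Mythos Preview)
Your proof is correct and follows essentially the same route as the paper's own argument: expand in brackets, apply the weight-zero matching Rota-Baxter relation to rewrite $P_\beta([P_\alpha(x),y])$, use the Jacobi identity to collapse $[P_\alpha(x),[P_\beta(y),z]]-[[P_\alpha(x),P_\beta(y)],z]$ into $[P_\beta(y),[P_\alpha(x),z]]$, and finish with antisymmetry. The only difference is cosmetic ordering of the intermediate steps.
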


\begin{proof}
For $x, y,z \in R$, $\alpha, \beta \in \Omega$, we have
\begin{align*}
&\ x\ast_\alpha (y\ast_\beta z)-(x\ast_\alpha y)\ast_\beta z\\
=&\ x\ast_\alpha [P_{\beta}(y), z]-[P_\alpha(x), y]\ast_\beta z\quad \text{(by Eq.~(\ref{eq:mpre1}))}\\
=&\ [P_{\alpha}(x), [P_{\beta}(y), z]]-[P_\beta([P_\alpha(x),y]), z]\quad \text{(by Eq.~(\ref{eq:mpre1}))}\\
=&\ [P_{\alpha}(x), [P_{\beta}(y), z]]-[[P_{\alpha}(x), P_{\beta}(y)], z]+[P_{\alpha}([x, P_{\beta}(y)]), z] \quad \text{(by Eq.~(\ref{eq:mlieid}))}\\
=&\ [P_{\alpha}(x), [P_{\beta}(y), z]]+[z, [P_{\alpha}(x), P_{\beta}(y)]]-[P_{\alpha}([P_{\beta}(y),x]), z] \\
=&\ -[P_\beta(y), [z ,P_\alpha(x)]]-[P_{\alpha}([P_{\beta}(y),x]), z] \quad \text{(by  Jacobi identity)}\\
=&\ [P_\beta(y), [P_\alpha(x), z]]-[P_{\alpha}([P_\beta(y), x]), z]\\
=&\ y\ast_\beta [P_\alpha(x), z]-[P_\beta(y), x]\ast_\alpha z\quad \text{(by Eq.~(\ref{eq:mpre1}))}\\
=&\ y\ast_\beta (x\ast_\alpha z)-(y\ast_\beta x)\ast_\alpha z \quad \text{(by Eq.~(\ref{eq:mpre1}))}.
\end{align*}
This completes the proof.
\end{proof}

\subsection{\Match PostLie algebras}
\mlabel{ss:mpol}

The notion of a PostLie algebra arising from an operad study~\mcite{Val07} and finding quite broad applications. See~\mcite{BGN0} for example. We now introduce the concept of a \match Lie algebra based on which we propose an analogs of PostLie algebra to multiple operations, called \match PostLie algebra. In order to extend the connection from tridendriform algebras to PostLie algebras to the multiple operation case, the notion of \match associative PostLie algebra is also introduced.

\begin{defn}
A {\bf \match Lie algebra} is a module  $\mathfrak{g}$ together with a family of binary operations
\begin{align*}
[\cdot,\cdot]_\omega: \mathfrak{g} \otimes \mathfrak{g} \rightarrow \mathfrak{g},\, (x,y) \longmapsto [x,y]_\omega, \omega\in \Omega,
\end{align*}
called  {\bf \match Lie brackets}, which satisfy the following conditions:
\begin{enumerate}
\item {\bf Alternativity}: $[x,x]_\omega =0$,
\item {\bf The \match Jacobi identity}: $[x,[y,z]_\beta]_\alpha+[y,[z,x]_\alpha]_\beta+[z,[x,y]_\alpha]_\beta=0$,
\end{enumerate}
 for all $x, y, z \in \mathfrak{g} $ and $\omega, \alpha, \beta \in \Omega$.
\end{defn}

\begin{remark}\label{remk:mmlie}
\begin{enumerate}
\item  When $\Omega$ is a singleton, a \match Lie algebra reduces to a Lie algebra. In fact, for any $\omega\in \Omega$, $(\mathfrak{g}, [\cdot,\cdot]_\omega)$  is a Lie algebra.
\item \label{it:r3} A \match associative algebra $(A, \{\bullet_\omega\mid \omega \in \Omega\})$ has a natural \match Lie algebraic structure with the \match Lie bracket given by
    \begin{align*}
    [x, y]_\omega:=x\bullet_\omega y-y \bullet_\omega x\, \text{ for all }\, x, y\in A, \omega \in \Omega.
    \end{align*}
\end{enumerate}
\end{remark}

Built on the notion of \match Lie algebras, we have
\begin{defn}
Let $\Omega$ be a nonempty set and $A$ a $\bfk$-module. A {\bf \match (left) PostLie algebra} is a couple $(A, \{[\,,\,]_\omega,\circ_\omega\mid \omega\in \Omega\})$ in which $(A,\{[\,,\,]_\omega\,|\,\omega\in \Omega\}$ is a \match Lie algebra and $\circ_\omega: A\ot A \rightarrow A, \omega\in \Omega$ is  a set of binary operations, satisfying
\begin{align}\label{eq:mplie1}
x\circ_\alpha (y\circ_\beta z)-(x\circ_\alpha y)\circ_\beta z-&y\circ_\beta (x\circ_\alpha z)+(y\circ_\beta x)\circ_\alpha z=[x, y]\circ_\alpha z,\\ \label{eq:mplie2}
x\circ_\alpha [y,z]_\beta=&[x\circ_\alpha y, z]_\beta+[y, x\circ_\alpha z]_\beta \tforall x,y,z\in A, \alpha, \beta \in \Omega.
\end{align}
\end{defn}

When the Lie bracket $[,]$ is zero, we have exactly a \match pre-Lie algebra.

It is well-known that a tridendriform algebra or a Rota-Baxter algebra of nonzero weight gives a PostLie algebra. Such a result for \match structures, that is, an analog of Theorem~\mref{thm:denpre} and Theorem~\mref{thm:rbmpre1} does not hold for a \match PostLie algebra. A simple indication of this phenomenon is that the antisymmetrization of a \match Rota-Baxter associative algebra of nonzero weight is not always a \match Rota-Baxter Lie algebra.

To remedy this situation, we replace the Lie algebra $(A,[\,,\,,])$ in a PostLie algebra by an associative algebra and replace
Eqs.~(\mref{eq:mplie1}) and (\mref{eq:mplie2}) by their associative analogs, and propose the following notion.

\begin{defn}
Let $\Omega$ be a nonempty set and $A$ a $\bfk$-module. A {\bf \match (left) associative PostLie algebra} is a pair $(A, \{\star_\omega, \circ_\omega\mid \omega\in \Omega\})$ consisting of a \match associative algebra $(A, \{\star_\omega\,|\,\omega\in \Omega\})$ and a set of binary operations $\circ_\omega: A\ot A \rightarrow A$, $\omega\in \Omega$, satisfying
\begin{align}
\label{eq:mplie3}
x\circ_\alpha (y\circ_\beta z)-(x\circ_\alpha y)\circ_\beta z-&y\circ_\beta (x\circ_\alpha z)+(y\circ_\beta x)\circ_\alpha z=(x\star_\beta y)\circ_\alpha z-(y\star_\alpha x)\circ_\beta z,\\ \label{eq:mplie4}
x\circ_\alpha (y\star_\beta z)-x\circ_\alpha(z\star_\beta y)=&(x\circ_\alpha y)\star_\beta z- z\star_\beta (x\circ_\alpha y)+y\star_\beta ( x\circ_\alpha z)-( x\circ_\alpha z)\star_\beta y,
\end{align}
for all $x,y,z\in A, \alpha, \beta \in \Omega.$
\end{defn}

By antisymmetrizing $\star_\omega$, a \match associative PostLie algebra is a \match PostLie algebra.

Similar to the close relationship between \match dendriform algebras and \match pre-Lie algebras. We also have an analogous result to capture the relationship between \match tridendriform algebras and \match associative PostLie algebras.

\begin{theorem}
Let $(A, \, \{\prec_{{\omega}}, \succ_{{\omega}}, \cdot_\omega  \, \mid \omega \in \Omega\})$ be a \match tridendriform algebra. Define
\begin{align}
x\star_\omega y:=x\cdot_\omega y,\quad  x \circ_\omega y:= x\succ_\omega y-y\prec_\omega x\, \text{ for } x,y\in R, \omega\in \Omega. \label{eq:mpplie}
\end{align}
Then the couple $(A, \{\star_\omega,\circ_\omega\mid \omega\in \Omega\})$ is a \match associative PostLie algebra.
\mlabel{thm:triposlie}
\end{theorem}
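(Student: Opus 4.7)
The plan is to verify the three defining conditions of a \match associative PostLie algebra: the \match associativity of $\{\star_\omega\}_{\omega\in \Omega}$ together with the two identities \eqref{eq:mplie3} and \eqref{eq:mplie4}. Since $\star_\omega = \cdot_\omega$ by definition, the \match associativity of $\{\star_\omega\}$ is immediate from axiom~\eqref{eq:tdf7}.

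For \eqref{eq:mplie3}, I would expand $x\circ_\alpha(y\circ_\beta z)-(x\circ_\alpha y)\circ_\beta z$ using the definition $x\circ_\omega y := x\succ_\omega y - y\prec_\omega x$ and then reduce by means of axioms \eqref{eq:tdf1}--\eqref{eq:tdf3}. Specifically, axiom \eqref{eq:tdf3} rewrites $x\succ_\alpha(y\succ_\beta z)$ as a sum of three terms; axiom \eqref{eq:tdf2} rewrites $(y\succ_\beta z)\prec_\alpha x$ as $y\succ_\beta(z\prec_\alpha x)$; and axiom \eqref{eq:tdf1} expands $(z\prec_\beta y)\prec_\alpha x$. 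After these substitutions, most surviving terms are invariant under the swap $(x,\alpha)\leftrightarrow(y,\beta)$, so subtracting the analogous expression for $y\circ_\beta(x\circ_\alpha z)-(y\circ_\beta x)\circ_\alpha z$ causes them to cancel in pairs. The only terms left are those carrying a $\cdot_\omega$, namely
\[
(x\cdot_\beta y)\succ_\alpha z - z\prec_\alpha (x\cdot_\beta y) - (y\cdot_\alpha x)\succ_\beta z + z\prec_\beta (y\cdot_\alpha x),
\]
which is exactly the expansion of $(x\star_\beta y)\circ_\alpha z - (y\star_\alpha x)\circ_\beta z$, proving \eqref{eq:mplie3}. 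This follows the symmetry strategy used successfully in Theorem~\ref{thm:denpre}.

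For \eqref{eq:mplie4}, I would similarly expand both sides using the definition of $\circ_\alpha$ and then reduce using the axioms \eqref{eq:tdf4}--\eqref{eq:tdf6}, which are precisely the compatibilities between $\cdot_\omega$ and the operations $\prec_\omega,\succ_\omega$. Axiom \eqref{eq:tdf4} converts $(x\succ_\alpha y)\cdot_\beta z$ to $x\succ_\alpha(y\cdot_\beta z)$ (and similarly for $y$); axiom \eqref{eq:tdf5} converts $(y\prec_\alpha x)\cdot_\beta z$ to $y\cdot_\beta(x\succ_\alpha z)$; and axiom \eqref{eq:tdf6} converts $(y\cdot_\beta z)\prec_\alpha x$ to $y\cdot_\beta(z\prec_\alpha x)$. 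After these substitutions both sides collapse to
\[
x\succ_\alpha(y\cdot_\beta z) - x\succ_\alpha(z\cdot_\beta y) + z\cdot_\beta(y\prec_\alpha x) - y\cdot_\beta(z\prec_\alpha x).
\]

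The only genuine difficulty is bookkeeping: \eqref{eq:mplie3} produces on the order of a dozen terms on each side once everything is fully expanded, and one must track them carefully. The $(x,\alpha)\leftrightarrow(y,\beta)$ symmetry of the Jacobi-like identity is the essential device that keeps the computation manageable, reducing the matching step to checking that only the $\cdot_\omega$-terms survive the subtraction. No further obstacles are expected.
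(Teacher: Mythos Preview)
Your proposal is correct and follows essentially the same approach as the paper's proof: expand via the definition of $\circ_\omega$, apply axioms \eqref{eq:tdf1}--\eqref{eq:tdf3} and exploit the $(x,\alpha)\leftrightarrow(y,\beta)$ symmetry to isolate the $\cdot_\omega$-terms for \eqref{eq:mplie3}, then use axioms \eqref{eq:tdf4}--\eqref{eq:tdf6} to verify \eqref{eq:mplie4}. The paper carries out exactly these computations, writing the verification of \eqref{eq:mplie4} as showing the difference of the two sides is zero rather than reducing both to a common expression, but this is the same argument.
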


\begin{proof}
For $x, y,z \in R$, $\alpha, \beta \in \Omega$, we have
\begin{align*}
&\ x\circ_\alpha (y\circ_\beta z)-(x\circ_\alpha y)\circ_\beta z\\
=&\ x\succ_\alpha(y\succ_\beta z)-(y\succ_\beta z)\prec_\alpha x
-x\succ_\alpha(z\prec_\beta y)+(z\prec_\beta y)\prec_\alpha x\\
&\ -(x\succ_\alpha y) \succ_\beta z+z\prec_\beta (x\succ_\alpha y)
+(y\prec_\alpha x)\succ_\beta z-z\prec_\beta(y\prec_\alpha x)
\quad \text{(by Eq.~(\ref{eq:mpplie}))}\\
=&\ (x\prec_\beta y)\succ_\alpha z-(y\succ_\beta z)\prec_\alpha x
-x\succ_\alpha(z\prec_\beta y)+z\prec_\alpha(y\succ_\beta x)
+z\prec_\beta (x\succ_\alpha y)\\
&\ +(y\prec_\alpha x)\succ_\beta z
+(x\cdot_\beta y)\succ_\alpha z+z\prec_\beta(y\cdot_\alpha x)
\quad \text{(by Eq.~(\ref{eq:tdf1}) and Eq.~(\ref{eq:tdf3}))}\\
=&\ (x\prec_\beta y)\succ_\alpha z+(y\prec_\alpha x)\succ_\beta z-(y\succ_\beta z)\prec_\alpha x-x\succ_\alpha(z\prec_\beta y)\\
&\ +z\prec_\alpha(y\succ_\beta x)+z\prec_\beta (x\succ_\alpha y)+(x\cdot_\beta y)\succ_\alpha z+z\prec_\beta(y\cdot_\alpha x)\\
=&\ (x\prec_\beta y)\succ_\alpha z+(y\prec_\alpha x)\succ_\beta z-y\succ_\beta (z\prec_\alpha x)-x\succ_\alpha(z\prec_\beta y)\\
&\ +z\prec_\alpha(y\succ_\beta x)+z\prec_\beta (x\succ_\alpha y)+(x\cdot_\beta y)\succ_\alpha z+z\prec_\beta(y\cdot_\alpha x)\quad \text{(by Eq.~(\ref{eq:tdf2}))}.
\end{align*}
Similarly,
\begin{align*}
y\circ_\beta (x\circ_\alpha z)-(y\circ_\beta x)\circ_\alpha z=&(x\prec_\beta y)\succ_\alpha z+(y\prec_\alpha x)\succ_\beta z-y\succ_\beta (z\prec_\alpha x)-x\succ_\alpha(z\prec_\beta y)\\
&\ +z\prec_\alpha(y\succ_\beta x)+z\prec_\beta (x\succ_\alpha y)+(y\cdot_\alpha x)\succ_\beta z+z\prec_\alpha(x\cdot_\beta y).
\end{align*}
Thus
\begin{align*}
&x\circ_\alpha (y\circ_\beta z)-(x\circ_\alpha y)\circ_\beta z- y\circ_\beta (x\circ_\alpha z)
 +(y\circ_\beta x)\circ_\alpha z\\
=&\ (x\cdot_\beta y)\succ_\alpha z- z\prec_\alpha(x\cdot_\beta y)-
(y\cdot_\alpha x)\succ_\beta z+z\prec_\beta(y\cdot_\alpha x)\\
=&\  (x\cdot_\beta y) \circ_\alpha z-(y\cdot_\alpha x) \circ_\beta z\quad \text{(by Eq.~(\ref{eq:mpplie}))}\\
=&\  (x\star_\beta y) \circ_\alpha z-(y\star_\alpha x) \circ_\beta z,
\end{align*}
proving Eq.~(\mref{eq:mplie3}). Also, by Eq.~(\ref{eq:mpplie}),
\begin{align*}
&\ x\circ_\alpha (y\star_\beta z-z\star_\beta y)-(x\circ_\alpha y)\star_\beta z+ z\star_\beta (x\circ_\alpha y)-y\star_\beta ( x\circ_\alpha z)+( x\circ_\alpha z)\star_\beta y\\
=&\ x\circ_\alpha (y\cdot_\beta  z-z\cdot_\beta  y)-(x\circ_\alpha y)\cdot_\beta  z+ z\cdot_\beta  (x\circ_\alpha y)-y\cdot_\beta  ( x\circ_\alpha z)+( x\circ_\alpha z)\cdot_\beta  y\\
=&\ x\succ_\alpha (y\cdot_\beta  z-z\cdot_\beta  y)-(y\cdot_\beta  z-z\cdot_\beta  y)\prec_\alpha x-(x\succ_\alpha y-y\prec_\alpha x)\cdot_\beta  z\\
&\ +z\cdot_\beta  (x\succ_\alpha y
-y\prec_\alpha x)-y\cdot_\beta (x\succ_\alpha z-z\prec_\alpha x)+(x\succ_\alpha z-z\prec_\alpha x)\cdot_\beta  y\\
=&\ 0 \quad \text{(by Eqs.~(\ref{eq:tdf4})-(\ref{eq:tdf6}))},
\end{align*}
proving Eq.~(\mref{eq:mplie4}).
\end{proof}

As a consequence, we have

\begin{coro}
Let $(R, \,\{P_{\omega} \mid \omega \in \Omega\} )$ be a \mrba of weight $\lambda_\Omega=\{\lambda_\omega\,|\,\omega\in \Omega\}$. Define
\begin{align*}
x\star_\omega y:= \lambda_\omega x y,\quad  x \circ_\omega y:= P_{\omega}(x)y-yP_{\omega}(x)\, \text{ for } x,y\in R, \omega\in \Omega.
\end{align*}
Then the pair $(R, \{\star_\omega, \circ_\omega\mid \omega\in \Omega\})$ is a \match associative PostLie algebra.
\end{coro}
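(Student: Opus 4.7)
The plan is to derive this corollary directly by composing the two preceding results. First, I would apply Theorem~\mref{thm:dend}~(\mref{thm:dend1}) to the \match Rota-Baxter algebra $(R,\{P_\omega\mid \omega\in\Omega\})$ of weight $\lambda_\Omega$; this produces a \match tridendriform algebra structure on $R$ given by
\[
x\prec_\omega y := xP_\omega(y),\qquad x\succ_\omega y := P_\omega(x)y,\qquad x\cdot_\omega y := \lambda_\omega xy.
\]
Second, I would apply Theorem~\mref{thm:triposlie} to this \match tridendriform algebra, which yields a \match associative PostLie algebra whose operations are $x\star_\omega y := x\cdot_\omega y$ and $x\circ_\omega y := x\succ_\omega y - y\prec_\omega x$.

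The remaining step is a purely formal substitution check: under the identifications from the first step, $x\star_\omega y = \lambda_\omega xy$ and $x\circ_\omega y = P_\omega(x)y - yP_\omega(x)$, which are precisely the operations stipulated in the corollary. Since both the matching associativity of $\star_\omega$ and the matching associative PostLie identities in Eqs.~(\mref{eq:mplie3}) and (\mref{eq:mplie4}) have already been verified inside the proofs of Theorems~\mref{thm:dend} and~\mref{thm:triposlie}, no further calculation is needed. There is essentially no obstacle to overcome; the proof is a two-line composition, and one observes that the weights $\lambda_\omega$ enter the conclusion solely through the associative component $\star_\omega$, exactly as in the weight-$\lambda$ tridendriform picture.
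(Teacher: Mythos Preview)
Your proposal is correct and matches the paper's own proof exactly: the paper simply writes ``It follows from Theorem~\mref{thm:dend}~(\mref{thm:dend1}) and Theorem~\mref{thm:triposlie},'' which is precisely the two-step composition you describe. The explicit substitution check you add is a harmless elaboration of the same argument.
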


\begin{proof}
It follows from Theorem~\mref{thm:dend}~(\mref{thm:dend1}) and Theorem~\mref{thm:triposlie}.
\end{proof}

\subsection{Splitting of compatible Lie algebras}
In this subsection, we introduce the concept of compatible (multiple) Lie algebras. We then show that \match pre-Lie algebras give a splitting of compatible Lie algebras.

\begin{defn}
A {\bf compatible (multiple) Lie algebra} is a module  $\mathfrak{g}$ together with a family of binary operations
\begin{align*}
 \mathfrak{g} \otimes \mathfrak{g} \rightarrow \mathfrak{g},\, (x,y) \longmapsto [x,y]_\omega, \omega\in \Omega,
\end{align*}
called  {\bf compatible multiple Lie brackets}, which satisfy the following conditions:
\begin{enumerate}
\item {\bf Alternativity}: $[x,x]_\omega =0$,
\item {\bf The coupling Jacobi identity}:
\begin{align}
[x,[y,z]_\alpha]_\beta+[y,[z,x]_\alpha]_\beta+[z,[x,y]_\alpha]_\beta+
[x,[y,z]_\beta]_\alpha+[y,[z,x]_\beta]_\alpha+[z,[x,y]_\beta]_\alpha=0,\label{eq:copid}
\end{align}
\end{enumerate}
 for all $x, y, z \in \mathfrak{g} $ and $\omega, \alpha, \beta \in \Omega$.
\end{defn}

\begin{remark}
\begin{enumerate}
\item Every  matching  Lie algebra is a compatible Lie algebra.
\item When $\Omega=\{\alpha,\beta\}$, Eq.~(\mref{eq:copid}) was first introduced by Magri~\mcite{Mag78} in the study of integrable Hamiltonian equations and was called the coupling condition in~\mcite{Mag78}.
Eq.~(\mref{eq:copid}) also appeared in the work of Dotsenko and Khoroshkin~\mcite{DK07} defining the operad $\mathfrak{Lie}_2$ encoding two compatible Lie algebras.
\item Given two Lie algebras $(\mathfrak{g}, [,]_\alpha)$  and $(\mathfrak{g}, [,]_\beta)$. Define a new bracket $[,]: \mathfrak{g} \ot \mathfrak{g} \rightarrow \mathfrak{g}$ by taking
    \begin{align*}
    [x, y]:=a_\alpha[x, y]_\alpha+b_\beta[x, y]_\beta\, \text{ for some }\, a_\alpha, b_\alpha \in \bfk.
    \end{align*}
   Clearly, this new bracket is both skew symmetric and bilinear. Then the pair $(\mathfrak{g}, [,])$ is further a Lie algebra if $[, ]$ satisfies the Jacobi identity
   $[x,[y,z]]+[y,[z,x]]+[z,[x,y]]=0$. By a direct calculation,  we note that this
condition is equivalent to Eq.~(\mref{eq:copid}), see~\mcite{Str} for more details.
\end{enumerate}
\end{remark}

In parallel to the fact that the pre-Lie algebra is the splitting of the Lie algebra, we have

\begin{theorem}[{\bf Splitting the  compatible Lie algebras}]
 Let $(A, \{\ast_\omega\mid \omega\in \Omega\})$  be a  \match pre-Lie algebra. Define a set of binary operations
\begin{align}
[\,,\,]_\omega: A\ot A \rightarrow A,\quad [x, y]_\omega:= x\ast_\omega y-y\ast_\omega x\, \text{ for all } \, x,y,z\in R, \omega\in \Omega.
\mlabel{eq:spi2}
\end{align}
Then the pair $(A, \{[\,,\,]_\omega\mid \omega\in \Omega\})$ is a compatible  Lie algebra. \mlabel{them:aplcm}
\end{theorem}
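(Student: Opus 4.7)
The plan is to verify the two defining conditions of a compatible Lie algebra separately. Alternativity $[x,x]_\omega=0$ is immediate from the skew-symmetric form of Eq.~(\mref{eq:spi2}), so the substance of the argument will be to establish the coupling Jacobi identity~(\mref{eq:copid}). Rather than expanding its six terms into $24$ monomials in the $\ast_\omega$'s and cancelling them by repeated application of the \match pre-Lie identity~(\mref{eq:mpreid}), I intend to exploit the additive stability recorded in Remark~\mref{remk:pre}~(\mref{reit:3}): any linear combination of the operations $\ast_\omega$ is itself a pre-Lie product.

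Concretely, for fixed $\alpha,\beta\in\Omega$, set $\star:=\ast_\alpha+\ast_\beta$. By Remark~\mref{remk:pre}~(\mref{reit:3}), $(A,\star)$ is an ordinary left pre-Lie algebra, so its commutator
\begin{align*}
\{x,y\}:=x\star y-y\star x=[x,y]_\alpha+[x,y]_\beta
\end{align*}
is a Lie bracket and therefore satisfies $\sum_{\mathrm{cyc}}\{x,\{y,z\}\}=0$. The next step is to expand both the outer and the inner bracket bilinearly, which decomposes this sum according to the four choices of (inner index, outer index):
\begin{align*}
\sum_{\mathrm{cyc}}\{x,\{y,z\}\}=\sum_{\mathrm{cyc}}[x,[y,z]_\alpha]_\alpha+\sum_{\mathrm{cyc}}[x,[y,z]_\beta]_\beta+\sum_{\mathrm{cyc}}[x,[y,z]_\alpha]_\beta+\sum_{\mathrm{cyc}}[x,[y,z]_\beta]_\alpha.
\end{align*}
Specializing $\alpha=\beta=\omega$ in~(\mref{eq:mpreid}) shows that each $(A,\ast_\omega)$ is itself a pre-Lie algebra, so $[\cdot,\cdot]_\omega$ satisfies the ordinary Jacobi identity and the first two summands on the right vanish. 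What remains is exactly~(\mref{eq:copid}).

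The one point requiring care is the bilinear expansion in the middle step: I will need to check that distributing $\{\cdot,\{\cdot,\cdot\}\}$ over $\star=\ast_\alpha+\ast_\beta$ indeed recombines into the four cyclic sums as claimed, with the signs in the $(\alpha,\beta)$- and $(\beta,\alpha)$-pieces lining up with the six terms of~(\mref{eq:copid}) without duplication. I do not foresee any genuine obstacle here; the proof will ultimately boil down to a short appeal to Remark~\mref{remk:pre}~(\mref{reit:3}) together with the classical fact that the commutator of a pre-Lie product is a Lie bracket.
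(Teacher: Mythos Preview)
Your proposal is correct and takes a genuinely different, more conceptual route than the paper's argument. The paper proceeds by direct expansion: it writes out each of the six nested brackets $[x,[y,z]_\alpha]_\beta$, etc., as four $\ast$-monomials, obtaining $24$ terms, and then regroups these into six blocks of four, each block being an instance of the \match pre-Lie identity~(\mref{eq:mpreid}) and hence vanishing. Your approach sidesteps this bookkeeping by invoking Remark~\mref{remk:pre}~(\mref{reit:3}): since $\star=\ast_\alpha+\ast_\beta$ is again pre-Lie, the ordinary Jacobi identity for its commutator $\{\cdot,\cdot\}=[\cdot,\cdot]_\alpha+[\cdot,\cdot]_\beta$, after subtracting the pure Jacobi identities for $[\cdot,\cdot]_\alpha$ and $[\cdot,\cdot]_\beta$, leaves exactly the coupling identity~(\mref{eq:copid}). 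The bilinear expansion you flag as the one delicate point is indeed routine: $\{x,w\}=[x,w]_\alpha+[x,w]_\beta$ for any $w$, so substituting $w=[y,z]_\alpha+[y,z]_\beta$ and summing cyclically yields the four cyclic sums with no sign or duplication issues. Your argument is shorter and exhibits~(\mref{eq:copid}) as a polarization of the classical Jacobi identity; the paper's proof, though longer, has the modest advantage of being self-contained and of displaying explicitly which six instances of~(\mref{eq:mpreid}) are being used.
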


We note that the statement does not hold when compatible Lie algebras is replaced by \match Lie algebras.

\begin{proof}
Eq.~(\mref{eq:spi2}) directly implies $[x, x]_\omega=0$ for $x\in A$. It remains to verify the coupling Jacobi identity.
For $x, y, z \in A$, $\alpha, \beta \in \Omega$, we have
\begin{align*}
[x,[y,z]_\alpha]_\beta=&\ [x, y\ast_\alpha z-z\ast_\alpha y ]_\beta\\
=&\ x \ast_\beta (y\ast_\alpha z-z\ast_\alpha y)-(y\ast_\alpha z-z\ast_\alpha y) \ast_\beta x\\
=&\ {x \ast_\beta (y\ast_\alpha z)}
-{x \ast_\beta(z\ast_\alpha y)}
-{(y\ast_\alpha z) \ast_\beta x}
+{(z\ast_\alpha y) \ast_\beta x}.
\end{align*}
We also have
\begin{align*}
[y,[z,x]_\alpha]_\beta=&\ {y \ast_\beta (z\ast_\alpha x)}
-{y \ast_\beta(x\ast_\alpha z)}
-{(z\ast_\alpha x) \ast_\beta y}
+{(x\ast_\alpha z) \ast_\beta y},\\
[z,[x,y]_\alpha]_\beta=&\ {z \ast_\beta (x\ast_\alpha y)}
-{z \ast_\beta(y\ast_\alpha x)}
-{(x\ast_\alpha y) \ast_\beta z}
+{(y\ast_\alpha x) \ast_\beta z},\\
[x,[y,z]_\beta]_\alpha =&\ {x \ast_\alpha (y\ast_\beta z)}
-{x \ast_\alpha(z\ast_\beta y)}
-{(y\ast_\beta z) \ast_\alpha x}
+{(z\ast_\beta y) \ast_\alpha x},\\
[y,[z,x]_\beta]_\alpha=&\ {y \ast_\alpha (z\ast_\beta x)}
-{y \ast_\alpha(x\ast_\beta z)}
-{(z\ast_\beta x) \ast_\alpha y}
+{(x\ast_\beta z) \ast_\alpha y},\\
[z,[x,y]_\beta]_\alpha=&\ {z \ast_\alpha (x\ast_\beta y)}
-{z \ast_\alpha(y\ast_\beta x)}
-{(x\ast_\beta y) \ast_\alpha z}
+{(y\ast_\beta x) \ast_\alpha z}.
\end{align*}
Putting all together, we obtain
\begin{align*}
&\ [x,[y,z]_\alpha]_\beta+[y,[z,x]_\alpha]_\beta+[z,[x,y]_\alpha]_\beta+
[x,[y,z]_\beta]_\alpha+[y,[z,x]_\beta]_\alpha+[z,[x,y]_\beta]_\alpha\\
=&\ {x \ast_\alpha (y\ast_\beta z)}
-{(x\ast_\alpha y) \ast_\beta z}
-{y \ast_\beta(x\ast_\alpha z)}
+{(y\ast_\beta x) \ast_\alpha z}\\
&\ +{y \ast_\alpha (z\ast_\beta x)}
-{(y\ast_\alpha z) \ast_\beta x}
-{z \ast_\beta(y\ast_\alpha x)}
+{(z\ast_\beta y) \ast_\alpha x}\\
&\ +{z \ast_\alpha (x\ast_\beta y)}
-{(z\ast_\alpha x) \ast_\beta y}
-{x \ast_\beta(z\ast_\alpha y)}
+{(x\ast_\beta z) \ast_\alpha y}\\
&\ +{x \ast_\beta (y\ast_\alpha z)}
-{(x\ast_\beta y) \ast_\alpha z}
-{y \ast_\alpha(x\ast_\beta z)}
+{(y\ast_\alpha x) \ast_\beta z}\\
&\ +{y \ast_\beta (z\ast_\alpha x)}
-{(y\ast_\beta z) \ast_\alpha x}
-{z \ast_\alpha(y\ast_\beta x)}
+{(z\ast_\alpha y) \ast_\beta x}\\
&\ +{z \ast_\beta (x\ast_\alpha y)}
-{(z\ast_\beta x) \ast_\alpha y}
-{x \ast_\alpha(z\ast_\beta y)}
+{(x\ast_\alpha z) \ast_\beta y}.
\end{align*}
Since $(A, \{\ast_\omega\mid \omega\in \Omega\})$  is a  \match pre-Lie algebra, each row on the right hand side of the equation is zero, giving what we need.
\end{proof}

The following result provides the connection between compatible associative algebras and  compatible Lie algebras.

We also note that, when the term compatible is replaced by \match in the theorem, the corresponding statement is not valid. Indeed, if $(A, \{\bullet_\omega\mid \omega\in \Omega\})$  is a \match associative algebra, we still get a compatible Lie algebra, not a \match Lie algebra.

\begin{theorem}
Let $(A, \{\bullet_\omega\mid \omega\in \Omega\})$  be a  compatible associative algebras. Define a set of binary operations
\begin{align*}
[\,,\,]_\omega: A\ot A \rightarrow A,\quad [x, y]_\omega:= x\bullet_\omega y-y\bullet_\omega x\, \text{ for all }\, x,y,z\in R, \omega\in \Omega.
\end{align*}
Then the pair $(A, \{[\,,\,]_\omega\mid \omega\in \Omega\})$ is a compatible  Lie algebra. \mlabel{them:malie}
\end{theorem}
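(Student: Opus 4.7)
My plan for this theorem has two parts: alternativity and the coupling Jacobi identity. Alternativity, $[x,x]_\omega = x\bullet_\omega x - x\bullet_\omega x = 0$, is automatic from the antisymmetric definition, so the real work lies in verifying Eq.~(\mref{eq:copid}) for the brackets $[\cdot,\cdot]_\omega$.

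The approach I would take exploits the characterization recorded in the remark following the definition of a compatible multiple associative algebra: Eq.~(\mref{eq:wm1}) is equivalent to the associativity of every linear combination $x \star y := a_\alpha\, x \bullet_\alpha y + a_\beta\, x \bullet_\beta y$. Fixing a pair $\alpha, \beta \in \Omega$ and taking $a_\alpha = a_\beta = 1$, I would first confirm that $x \star y := x \bullet_\alpha y + x \bullet_\beta y$ is associative; this is a short direct check, as the ``diagonal'' part of $(x\star y)\star z - x\star(y\star z)$ is killed by the associativity of each $\bullet_\omega$ separately (which itself falls out of Eq.~(\mref{eq:wm1}) by setting $\alpha=\beta$) and the ``cross'' part is precisely Eq.~(\mref{eq:wm1}).

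Next, since $\star$ is associative, the commutator $[x,y]_\star := x\star y - y\star x$ is a Lie bracket, so $\sum_{\mathrm{cyc}} [x,[y,z]_\star]_\star = 0$. By bilinearity, $[x,y]_\star = [x,y]_\alpha + [x,y]_\beta$, and plugging this decomposition into the Jacobi identity for $\star$ yields four groups of cyclic sums indexed by $(\omega_1,\omega_2) \in \{\alpha,\beta\}^2$. The two ``pure'' groups $\sum_{\mathrm{cyc}} [x,[y,z]_\omega]_\omega$ vanish by the ordinary Jacobi identity for $[\cdot,\cdot]_\omega$ (again a consequence of each $\bullet_\omega$ being associative), and what remains is exactly Eq.~(\mref{eq:copid}).

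I do not anticipate a substantial obstacle. The whole argument collapses to a one-line observation once the equivalence between Eq.~(\mref{eq:wm1}) and the associativity of $\star$ is in hand, and it parallels the classical antisymmetrization of an associative product into a Lie bracket. A brute-force alternative --- expanding the six terms of Eq.~(\mref{eq:copid}) into $24$ products via the definition of $[\cdot,\cdot]_\omega$ and pairing them off using Eq.~(\mref{eq:wm1}) --- is also feasible, but the conceptual route keeps the bookkeeping minimal and, pleasantly, also makes transparent why the output is a \emph{compatible} rather than a \match Lie algebra: the diagonal Jacobi identities already exhaust what one would need for a \match Lie structure, so the cross terms Eq.~(\mref{eq:copid}) are genuinely weaker and not upgradable in general.
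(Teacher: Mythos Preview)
Your argument is correct and takes a genuinely different route from the paper's. The paper proves the coupling Jacobi identity by brute force: it expands each of the six double brackets $[x,[y,z]_\alpha]_\beta$, etc., into four associative products, collects the resulting $24$ terms, and regroups them into six four-term blocks each of which is exactly an instance of Eq.~(\mref{eq:wm1}). Your approach instead pushes all the bookkeeping into the single classical fact that an associative product antisymmetrizes to a Lie bracket, applied to $\star=\bullet_\alpha+\bullet_\beta$; the coupling Jacobi identity then drops out as the cross term of the Jacobi identity for $[\,,\,]_\star$ after the pure $\alpha$- and $\beta$-Jacobi identities are subtracted off.

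What each buys: your route is shorter and conceptually cleaner, and it makes the parallel with the remark after the definition of compatible multiple associative algebras explicit. The paper's direct expansion, on the other hand, uses only Eq.~(\mref{eq:wm1}) and never invokes the individual associativity of each $\bullet_\omega$; since deducing that associativity from Eq.~(\mref{eq:wm1}) via $\alpha=\beta$ requires $2$ to be a non-zerodivisor in $\bfk$, the paper's computation is marginally more robust over general base rings. (The paper itself asserts individual associativity as a consequence, so this is a footnote rather than an objection to your proof.) Your closing remark about why the result is compatible rather than \match\ is slightly off: the \match\ Jacobi identity is itself a mixed $(\alpha,\beta)$ identity, not a diagonal one, so it is not ``exhausted'' by the pure Jacobi identities---the point is rather that the cross term you isolate is the symmetrized sum of the two \match\ identities, which is strictly weaker.
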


\begin{proof}
It is sufficient to verify the coupling Jacobi identity.
For $x, y, z \in A$, $\alpha, \beta \in \Omega$, we have
\begin{align*}
[x,[y,z]_\alpha]_\beta=&\ [x, y\bullet_\alpha z-z\bullet_\alpha y ]_\beta\\
=&\ x \bullet_\beta (y\bullet_\alpha z-z\bullet_\alpha y)-(y\bullet_\alpha z-z\bullet_\alpha y) \bullet_\beta x\\
=&\ {x \bullet_\beta (y\bullet_\alpha z)}
-{x \bullet_\beta(z\bullet_\alpha y)}
-{(y\bullet_\alpha z) \bullet_\beta x}
+{(z\bullet_\alpha y) \bullet_\beta x}.
\end{align*}
Also,
\begin{align*}
[y,[z,x]_\alpha]_\beta
=&\ {y \bullet_\beta (z\bullet_\alpha x)}
-{y \bullet_\beta(x\bullet_\alpha z)}
-{(z\bullet_\alpha x) \bullet_\beta y}
+{(x\bullet_\alpha z) \bullet_\beta y},\\
[z,[x,y]_\alpha]_\beta
=&\ {z \bullet_\beta (x\bullet_\alpha y)}
-{z \bullet_\beta(y\bullet_\alpha x)}
-{(x\bullet_\alpha y) \bullet_\beta z}
+{(y\bullet_\alpha x) \bullet_\beta z},\\
[x,[y,z]_\beta]_\alpha
=&\ {x \bullet_\alpha (y\bullet_\beta z)}
-{x \bullet_\alpha(z\bullet_\beta y)}
-{(y\bullet_\beta z) \bullet_\alpha x}
+{(z\bullet_\beta y) \bullet_\alpha x},\\
[y,[z,x]_\beta]_\alpha
=&\ {y \bullet_\alpha (z\bullet_\beta x)}
-{y \bullet_\alpha(x\bullet_\beta z)}
-{(z\bullet_\beta x) \bullet_\alpha y}
+{(x\bullet_\beta z) \bullet_\alpha y},\\
[z,[x,y]_\beta]_\alpha
=&\ {z \bullet_\alpha (x\bullet_\beta y)}
-{z \bullet_\alpha(y\bullet_\beta x)}
-{(x\bullet_\beta y) \bullet_\alpha z}
+{(y\bullet_\beta x) \bullet_\alpha z}.
\end{align*}
Adding all the equations together, we obtain
\begin{align*}
&\ [x,[y,z]_\alpha]_\beta+[y,[z,x]_\alpha]_\beta+[z,[x,y]_\alpha]_\beta+
[x,[y,z]_\beta]_\alpha+[y,[z,x]_\beta]_\alpha+[z,[x,y]_\beta]_\alpha\\
=&\ {x \bullet_\alpha (y\bullet_\beta z)}
+{x \bullet_\beta (y\bullet_\alpha z)}
-{(x\bullet_\beta y) \bullet_\alpha z}
-{(x\bullet_\alpha y) \bullet_\beta z}\\
&\ +{y \bullet_\alpha (z\bullet_\beta x)}
+{y \bullet_\beta (z\bullet_\alpha x)}
-{(y\bullet_\beta z) \bullet_\alpha x}
-{(y\bullet_\alpha z) \bullet_\beta x}\\
&\ +{z \bullet_\alpha (x\bullet_\beta y)}
+{z \bullet_\beta (x\bullet_\alpha y)}
-{(z\bullet_\beta x) \bullet_\alpha y}
-{(z\bullet_\alpha x) \bullet_\beta y}\\
&\ +{(y\bullet_\beta x) \bullet_\alpha z}
+{(y\bullet_\alpha x) \bullet_\beta z}
-{y \bullet_\beta(x\bullet_\alpha z)}
-{y \bullet_\alpha(x\bullet_\beta z)}\\
&\ +{(x\bullet_\beta z) \bullet_\alpha y}
+{(x\bullet_\alpha z) \bullet_\beta y}
-{x \bullet_\beta(z\bullet_\alpha y)}
-{x \bullet_\alpha(z\bullet_\beta y)}\\
&\ +{(z\bullet_\beta y) \bullet_\alpha x}
+{(z\bullet_\alpha y) \bullet_\beta x}
-{z \bullet_\beta(y\bullet_\alpha x)}
-{z \bullet_\alpha(y\bullet_\beta x)}.
\end{align*}
Now each row on the right hand side vanishes since $(A, \{\bullet_\omega\mid \omega\in \Omega\})$  is a  compatible associative algebra. This gives what we need.
\end{proof}

\noindent {\bf Acknowledgments}:
This research is supported by the National Natural Science Foundation of China (Grant No.\@ 11771190, 11771191).

\end{document}